\DeclareMathOperator{\Ric}{Ric}
\DeclareMathOperator{\spt}{spt}
\newtheorem{theo}{Theorem}[]
\newtheorem{prop}[theo]{Proposition}
\newtheorem{lemme}[theo]{Lemma}
\newtheorem{definition}[theo]{Definition}
\newtheorem{remarque}[theo]{Remark}
\begin{document}

\title [Embeddedness of least area minimal hypersurfaces] {Embeddedness of least area minimal hypersurfaces}
\author{Antoine Song}

\begin{abstract}
In \cite{CC}, E. Calabi and J. Cao showed that a closed geodesic of least length in a two-sphere with nonnegative curvature is always simple. Using min-max theory, we prove that for some higher dimensions, this result holds without assumptions on the curvature. More precisely, in a closed $(n+1)$-manifold with $2 \leq n \leq 6$, a least area closed minimal hypersurface exists and any such hypersurface is embedded. \\
As an application, we give a short proof of the fact that if a closed three-manifold $M$ has scalar curvature at least $6$ and is not isometric to the round three-sphere, then $M$ contains an embedded closed minimal surface of area less than $4\pi$. This confirms a conjecture of F. C. Marques and A. Neves.
\end{abstract}

\maketitle
\begin{center}
\end{center}

\section*{}      

Let $(M^{n+1},g)$ be a closed $(n+1)$-manifold with $2\leq n \leq 6$. Consider a closed $n$-manifold ${\Gamma}$ and an immersion $\psi: \GammaÊ\to M$. If $\psi$ is minimal then $\psi(\Gamma) = \Sigma$ is called a minimal hypersurface and $\Sigma$ is said to be a minimal hypersurface of least area if for every minimal hypersurface $\Sigma'\subset M$, $$\mathcal{H}^n(\Sigma)\leq\mathcal{H}^n(\Sigma').$$ We will only consider closed minimal hypersurfaces. Minimal hypersurfaces which minimize the area among a family of \textit{embedded} minimal hypersurfaces have been previously examined in some situations related to min-max theory. In \cite{MaNe}, Marques and Neves initiated the characterisation of the area and the Morse index of surfaces produced by min-max theory in three-manifolds: among other things, they showed that in a closed oriented three-manifold with positive Ricci curvature, with Heegaard genus $h$ and which does not contain any embedded non-orientable surface, there is an index one minimal suface $\Sigma$ of genus $h$ produced by the smooth min-max theory (see \cite{C&DL}) such that
$$\mathcal{H}^2(\Sigma)=\inf_{S \in \mathcal{E}_h}\mathcal{H}^2(S),$$
where $\mathcal{E}_h$ denotes the collection of all connected {embedded} minimal surfaces of genus not larger than $h$. This result was later extended to higher dimensions by Zhou \cite{Zhou} in the following way. Let $M^{n+1}$ be a closed oriented manifold with $2\leq n \leq 6$. Let $\mathcal{O}$ be the collection of all {embedded} orientable closed minimal hypersurfaces in $M$ and $\mathcal{U}$ be the collection of the non orientable ones. Zhou studied the minimal hypersurfaces produced by Almgren-Pitts' theory applied to the fundamental class $[M]$, under the assumption of positive Ricci curvature. It turns out in particular that their area counted with multiplicity is 
$$\mathcal{A}_1(M) = \inf(\{\mathcal{H}^n(\Sigma); \Sigma \in \mathcal{O}\} \cup \{2\mathcal{H}^n(\Sigma); \Sigma \in \mathcal{U}\}).$$
Recently, Mazet and Rosenberg \cite{MR} proved that, without assumptions on the Ricci curvature, this area $\mathcal{A}_1(M)$ is realized by a hypersurface and classified all such hypersurfaces: either they are stable or they are of index one and produced by min-max constructions. The forementioned authors defined the quantity $\mathcal{A}_1(M)$ by separating orientable and non-orientable minimal hypersurfaces. This is due to technical reasons coming from their use of Almgren-Pitts' min-max theory. In this paper we do not need to make this distinction and we are interested in least area minimal hypersurfaces, where "least area" is taken in the geometric sense. The aim is to prove by min-max methods that in $M$, any least area minimal hypersurface is actually embedded. It extends a result of Calabi and Cao \cite{CC}.

\begin{theo} \label{principal}
Let $(M^{n+1},g)$ be a closed $(n+1)$-manifold with $2\leq n \leq 6$. Then there exists a least area closed minimal hypersurface $\Sigma \subset M$ and any such $\Sigma$ is embedded.
\end{theo}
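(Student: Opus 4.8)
The plan is to reduce the entire statement to the following assertion, call it $(\star)$: \emph{if $\Sigma\subset M$ is an immersed minimal hypersurface that is not embedded, then $M$ contains an embedded minimal hypersurface of area strictly less than $\mathcal{H}^n(\Sigma)$.} Granting $(\star)$, set $A_0=\inf \mathcal{H}^n(\Sigma')$ over all (immersed) minimal hypersurfaces $\Sigma'\subset M$. The monotonicity formula gives $A_0>0$, and Almgren--Pitts min-max together with Schoen--Simon regularity (using $2\le n\le 6$) produces at least one embedded minimal hypersurface, so $A_0<\infty$. By $(\star)$, $A_0$ coincides with the infimum of $\mathcal{H}^n$ over \emph{embedded} minimal hypersurfaces; this latter infimum is attained, by a minimizing sequence of embedded minimal hypersurfaces and the compactness theory for embedded minimal hypersurfaces of bounded area in the range $2\le n\le 6$, so there is an embedded minimal hypersurface realizing $A_0$ --- this is the existence part. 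Finally, if some minimal hypersurface $\Sigma$ with $\mathcal{H}^n(\Sigma)=A_0$ were not embedded, $(\star)$ would give an embedded minimal hypersurface of area $<A_0$, contradicting the definition of $A_0$; hence every least area minimal hypersurface is embedded.

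I would prove $(\star)$ by a surgery-plus-extraction argument in the spirit of Calabi--Cao. Write $\Sigma=\psi(\Gamma)$. Since $\psi$ is a minimal immersion, unique continuation forces any failure of injectivity to produce a genuine transverse self-intersection, and the self-intersection locus of $\psi$ is a stratified set of dimension at most $n-1$ whose top stratum consists of embedded $(n-1)$-dimensional sheets of transverse double points. One resolves these self-intersections by the standard desingularization: in a thin tubular neighbourhood of a piece $\Delta$ of the crossing locus the hypersurface looks like $\Delta$ times two transverse arcs crossing in a $2$-disk, and one replaces this by $\Delta$ times two short disjoint reconnecting arcs; because two crossing diameters of a disk are longer than two short chords joining adjacent endpoints, the $n$-area strictly decreases. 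Performing this along the self-intersection locus and smoothing, one obtains a closed embedded hypersurface $\Sigma'$ --- possibly disconnected and possibly non-orientable --- with $\mathcal{H}^n(\Sigma')<\mathcal{H}^n(\Sigma)$.

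It remains to extract from $\Sigma'$ an embedded minimal hypersurface of area at most $\mathcal{H}^n(\Sigma')$. If some component of $\Sigma'$ is already minimal we are done. If some component $\Sigma'_0$ is non-trivial in $H_n(M;\mathbb{Z}_2)$, then minimizing the mod-$2$ mass in the class of $\Sigma'_0$ yields, by Federer--Fleming compactness and codimension-one regularity for mod-$2$ minimizers (no singularities since $n\le 6$), a non-empty embedded minimal hypersurface of area $\le \mathcal{H}^n(\Sigma'_0)\le\mathcal{H}^n(\Sigma')<\mathcal{H}^n(\Sigma)$. The remaining case is when every component of $\Sigma'$ is null-homologous mod $2$, so $\Sigma'=\partial\Omega$ for a compact domain $\Omega\subsetneq M$ with non-minimal boundary. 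Here I would run an area-decreasing process starting from $\Sigma'$ --- either minimization of area in the isotopy class of $\Sigma'$ of Meeks--Simon--Yau type, or a weak mean curvature flow --- which, since $\Sigma'$ is not minimal, is non-constant and has non-increasing area: it either converges to a non-empty embedded minimal hypersurface of area $\le\mathcal{H}^n(\Sigma')<\mathcal{H}^n(\Sigma)$, and we are done, or it disappears, which (doing the same on $M\setminus\Omega$) exhibits a one-parameter sweepout of $M$ by hypersurfaces of area $\le\mathcal{H}^n(\Sigma')$; then the width satisfies $\omega_1(M)\le \mathcal{H}^n(\Sigma')<\mathcal{H}^n(\Sigma)$, and the Almgren--Pitts--Schoen--Simon minimal hypersurface realizing $\omega_1(M)$ --- embedded precisely because $2\le n\le 6$ --- has area $\le\omega_1(M)<\mathcal{H}^n(\Sigma)$.

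The main obstacle is this last, inessential, case: making the surgered hypersurface disappear through hypersurfaces of controlled area, and organizing this into a genuine sweepout of $M$ while keeping areas below $\mathcal{H}^n(\Sigma')$, when no mean-convexity of the complementary regions is available. This is also where the hypothesis $2\le n\le 6$ is essential rather than cosmetic: it is exactly the range in which one-parameter min-max and homological minimization yield \emph{embedded} minimal hypersurfaces --- which fails for closed geodesics ($n=1$, whence the curvature assumption in Calabi--Cao) and fails because of singularities for $n\ge 7$. Secondary technical points, such as maintaining the strict area decrease through the smoothing and carrying out the desingularization along the lower-dimensional strata, I expect to be routine but somewhat delicate.
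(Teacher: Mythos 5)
Your reduction to $(\star)$ — if $\Sigma$ is a non-embedded minimal hypersurface then $M$ contains an embedded minimal hypersurface of strictly less area — is essentially the paper's Proposition \ref{embed}, and your surgery near the transverse crossing locus parallels Lemma \ref{Plateau}. But the null-homologous case of $(\star)$, which you call the "last, inessential, case," is in fact the heart of the problem, and the flow or isotopy-minimization step you propose there does not go through. Meeks--Simon--Yau isotopy minimization is specific to $n=2$ and needs incompressibility to prevent the minimizing limit from vanishing; weak mean curvature flow for $n\ge 2$ develops singularities, can fatten, and does not hand you a flat-continuous path of $n$-cycles with controlled mass, which is what is needed to produce a sweepout of $M$ of width $<\mathcal{H}^n(\Sigma)$. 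The paper's route hinges on an idea absent from your proposal: work separately on each component $c$ of $M\setminus\Sigma$, cut a corner of $\partial c$ as in Lemma \ref{Plateau}, then \emph{deform the metric in a small ball away from the corner} (Lemma \ref{justeacote}) so that a nearby hypersurface $\partial\rho_1$ becomes piecewise smooth mean convex for the original metric (Proposition \ref{claim}). Mean convexity acts as a barrier (Proposition \ref{vector field}), so continuous min-max in $\rho_1$ with $\partial\rho_1$ as a floor (Theorem \ref{piecew}) either yields a minimal hypersurface inside $\rho_1$ of smaller area — contradicting $\mathcal{H}^n(\Sigma)\le\mathfrak{A}_S(M)$ after a further homological minimization — or gives a retraction $\chi_c$ of $\partial c$ to $\varnothing$ through cycles of area $<\mathcal{H}^n(\partial c)$ staying in $c$ (Lemma \ref{attention}). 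The partition Lemma \ref{partition} then concatenates these $\chi_c$ into a flat-continuous family $A$ detecting the fundamental class with $\max_s\mathbf{M}(A(s))\le\mathcal{H}^n(\Sigma)$, and Almgren--Pitts gives the required embedded hypersurface of strictly less area; equality is ruled out because it would force $\Sigma$ to be $\mathbb{Z}_2$ almost minimizing, hence embedded. You have no substitute for the mean-convexity trick, which is what makes the sweepout construction possible.

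There is a secondary gap in your existence argument: compactness for embedded minimal hypersurfaces of bounded area alone is not a theorem in the range $2\le n\le 6$ without a stability or index bound (Schoen--Simon requires stability; Sharp requires a bound on the index), and an arbitrary minimizing sequence for $\mathfrak{A}(M)$ need satisfy neither. The paper sidesteps this: compactness is applied only to stable embedded hypersurfaces, so $\mathfrak{A}_S(M)$ is attained; then either $\mathfrak{A}(M)=\mathfrak{A}_S(M)$ and one is done, or $\mathfrak{A}(M)<\mathfrak{A}_S(M)$, in which case Propositions \ref{sweepout} and \ref{embed} show that every minimal hypersurface of area $<\mathfrak{A}_S(M)$ has area $\ge\mathbf{L}(\Pi)$ for the fixed min-max class $\Pi$ corresponding to $[M]\in H_{n+1}(M,\mathbb{Z}_2)$, whence $\mathfrak{A}(M)=\mathbf{L}(\Pi)$ and the Almgren--Pitts minimal hypersurface realizing $\mathbf{L}(\Pi)$ is the desired least area minimizer.
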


We state in Section \ref{proof} a more detailed version, which precises that, similarly to \cite{MR}, the least area minimal hypersurfaces are either embedded stable hypersurfaces or given by Almgren-Pitts' min-max theory applied to the fundamental class $[M]$. Interestingly, no assumptions on the curvature are needed here whereas in dimension $n=1$ there are counterexamples if the curvature of the sphere is not nonnegative. This is related to the fact that min-max theory produces embedded hypersurfaces for $n=2,...,6$ but only immersed geodesics for $n=1$.

In \cite{CC}, Calabi and Cao proved that on a Riemannian sphere $\mathcal{S}$ with nonnegative curvature, any non-trivial closed geodesic of the shortest length is simple. Their proof is based on the fact that if $\gamma \subset \mathcal{S}$ is a closed geodesic which is not simple, then there is a nontrivial loop $\{\sigma_t\}_{t\in [0,1]}$ in the space of one-dimensional integral currents without boundary such that each $\sigma_t$ has mass less than the length of $\gamma$. Then by the min-max theory of Almgren and Pitts, there exists a geodesic strictly shorter than $\gamma$. Actually they noted that, in dimension one, the min-max principle can be shown simply without Geometric Measure Theory. Their construction of the "sweepout" family $\{\sigma_t\}$ relies in an essential way on the dimension of the problem: indeed for curves, there is an adhoc shortening procedure known as the Birkhoff curve shortening process. It turns out that there is already no equivalent process for surfaces. The proof of Theorem \ref{principal} will retain the main structure of the proof of Calabi and Cao, but we have to replace the Birkhoff process by a method developed by Marques and Neves in \cite{MaNe} when $n=2$ and extended to $2\leq n\leq 6$ by Zhou in \cite{Zhou}. The restriction on the dimension is a classical one due to the regularity results of Schoen and Simon \cite{SchoenSimon}. The structure of an immersed minimal hypersurface ($n\geq 2$) is less understood than the case of geodesics, and it will add some technical issues.

As a corollary of Theorem \ref{principal}, we can prove the following result conjectured by Marques and Neves in \cite{MaNe}: it asserts that a positive lower bound on the scalar curvature of a closed three-manifold $M$ gives a rigid upper bound on the area of the smallest embedded closed minimal surface in $M$.

\begin{theo} \label{conj}
Let $M^3$ be a closed three-manifold with scalar curvature $R$ at least $6$, not isometric to the round unit three-sphere $S^3$. Then there exists a closed embedded minimal surface $\Sigma$ of index zero or one such that
$$\mathcal{H}^2(\Sigma) < 4\pi.$$
\end{theo}

Marques and Neves already proved this theorem with the additional assumption that the Ricci curvature is positive. In their proof, they use the min-max theory of Simon-Smith and Hamilton's article on the Ricci flow for three-manifolds with positive Ricci curvature. Our proof is based on the theory of Almgren-Pitts and only uses the short-time existence theorem for the Ricci flow.

This paper is organized as follows: in Section \ref{prel} are reviewed the basic notions of Almgren-Pitts' theory as well as the continuous min-max theory, then some min-max constructions are presented and an outline of the principal proof sums up the strategy and the differences with \cite{CC}. The proof of Theorem \ref{principal} is given in Section \ref{proof} and we explain in Section \ref{conjec} how Theorem \ref{conj} is a consequence of the main theorem. Some technical results are proved in the Appendix. 

\subsection*{Acknowledgement} 
I am grateful to my advisor Fernando Cod{\'a} Marques for bringing a version of the main question to my attention. I would like to thank him for his constant support, for stimulating discussions and for guiding me through the recent literature. I also want to thank Harold Rosenberg for a meaningful discussion.

\section{Preliminaries} \label{prel}

In this section, we present a brief overview of the essential notions of Almgren and Pitts' theory, the continuous min-max theory and some useful results. Afterwards, a heuristic description of the main steps in the proof of Theorem \ref{principal} is given as well.

\subsection{} \textbf{Min-max theory in the setting of Almgren and Pitts}

For the convenience of the reader, we give a quick review of the basic definitions from Geometric Measure Theory and some notions of the Almgren and Pitts' theory used thereafter. For a complete presentation, one can refer to the book of Pitts \cite{P} or to Section 2 in \cite{MaNeinfinity}.

Let $M$ be a closed Riemannian $(n+1)$-manifold, assumed to be isometrically embedded in $\mathbb{R}^P$. We will always suppose $M$ connected. Because we will have to consider non-orientable submanifolds, we will use flat chains modulo $2$ (see \cite[4.2.26]{Federer}). We will work with the space $\mathbf{I}_k(M,\mathbb{Z}_2)$ of flat chains modulo $2$ in the congruence class of some $k$-dimensional integral currents with support contained in $M$, the subspace $\mathcal{Z}_k(M,\mathbb{Z}_2) \subset \mathbf{I}_k(M,\mathbb{Z}_2)$ whose elements have no boundary, and with the space $\mathcal{V}_k(M)$ of the closure, in the weak topology, of the set of $k$-dimensional rectifiable varifolds in $\mathbb{R}^P$ with support in $M$.

As in \cite{P}, we will suppress the superscript $2$ in the notation of \cite{Federer}. An integral current $T\in \mathbf{I}_k(M,\mathbb{Z}_2)$ determines an integral varifold $|T|$ and a Radon measure $||T||$ (\cite[Chapter 2, 2.1, (18) (e)]{P}). If $V \in \mathcal{V}_k(M)$, denote by $||V||$ the associated Radon measure on $M$. Given an open set $U\subset M$, if the associated rectifiable current is an integral current in $\mathbf{I}_{n+1}(M,\mathbb{Z}_2)$, it will be written as $[|U|]$. To a rectifiable subset $R$ of $M$ corresponds an integral varifold called $|R|$. The support of a current or a measure is denoted by $\spt$. The notation $\mathbf{M}$ stands for the mass of an element in $\mathbf{I}_k(M, \mathbb{Z}_2)$. On $\mathbf{I}_k(M,\mathbb{Z}_2)$ there is also the flat metric $\mathcal{F}(.,.)$ which induces the so-called flat topology. The space $\mathcal{V}_k(M)$ is endowed with the topology of the weak convergence of varifolds. The notations $\mathcal{Z}_k(M,\underline{\underline{\nu}},\mathbb{Z}_2)$ and $\mathbf{I}_k(M,\underline{\underline{\nu}},\mathbb{Z}_2)$ mean that the respective spaces of currents are considered with the topology induced by $\underline{\underline{\nu}}$, where $\underline{\underline{\nu}}$ is either $\mathbf{M}$ or $\mathcal{F}$.

We denote $[0,1]$ by $I$. For each $j\in \mathbb{N}$, $I(1,j)$ stands for the cell complex on $I$ whose $1$-cells and $0$-cells are, respectively,
$$[0,3^{-j}], [3^{-j},2.3^{-j}], ... , [1-3^{-j},1] \text{  and  } [0],[3^{-j}], ..., [1-3^{-j}],[1].$$
$I(1,j)_p$ denotes the set of all $p$-cells in $I(1,j)$.

In the theory of Almgren and Pitts, instead of considering continuous maps from $I$ to $\mathcal{Z}_n(M,\mathbb{Z}_2)$, one consider a sequence of mappings from $I(1,n_i)$ to $\mathcal{Z}_n(M,\mathbb{Z}_2)$, where $n_i\to \infty$ and the discrete slices corresponding to adjacent vertices in $I(1,n_j)$ are closer and closer. This leads to the following notions:

\begin{definition}
Whenever $\phi : I(1,j)_0 \to \mathcal{Z}_n(M,\underline{\underline{\nu}},\mathbb{Z}_2)$, we define the fineness of $\phi$ to be
$$\mathbf{f}_{\underline{\underline{\nu}}}(\phi) = \sup \bigg\{ \frac{\underline{\underline{\nu}}(\phi(x) - \phi(y))}{\mathbf{d}(x,y)} ; x,y \in I(1,j)_0,x\neq y\bigg\}$$
where $\mathbf{d}(x,y) = 3^j|x-y|.$
\end{definition}

For each $x\in I(1,j)_0$, define $\mathbf{n}(i,j)(x)$ to be the unique element of $I(1,j)_0$ such that $$\mathbf{d}(x,\mathbf{n}(i,j)(x)) = \inf \{\mathbf{d}(x,y) ; y\in I(1,j)_0\}.$$

\begin{definition}
\begin{enumerate}

\item Let $\delta>0$. We say that $\phi_1$ and $\phi_2$ are homotopic in $(\mathcal{Z}_n(M,\underline{\underline{\nu}},\mathbb{Z}_2),\{0\})$ with fineness $\delta$ if and only if there exist positive integers $k_1$, $k_2$, $k_3$ and a map
$$\psi : I(1,k_3)_0 \times I(1,k_3)_0 \to \mathcal{Z}_n(M,\underline{\underline{\nu}},\mathbb{Z}_2)$$
such that $\mathbf{f}_{\underline{\underline{\nu}}}(\psi)<\delta$ and whenever $j=1,2$ and $x\in I(1,k_3)_0$,
$$\phi_j : I(1,k_j) \to \mathcal{Z}_n(M,\underline{\underline{\nu}},\mathbb{Z}_2), \quad \phi_j ([0])=\phi_j([1]) =0,$$
$$\psi([j-1],x) = \phi_j(\mathbf{n}(k_3,k_j)(x)), \quad \psi(x,[0]) = \psi(x,[1]) = 0.$$

\item A $(1,\underline{\underline{\nu}})$-homotopy sequence of mappings into $(\mathcal{Z}_n(M,\underline{\underline{\nu}},\mathbb{Z}_2),\{0\})$ is a sequence $S=\{\phi_1,\phi_2,...\} $ for which there exist positive numbers $\delta_1,\delta_2...$ such that $\phi_i$ is homotopic to $\phi_{i+1}$ in $(\mathcal{Z}_n(M,\underline{\underline{\nu}},\mathbb{Z}_2),\{0\})$ with fineness $\delta_i$ for each positive integer $i$, $\lim_i\delta_i =0$ and 
$$\sup\{\mathbf{M}(\phi_i(x)) ; x \in \text{domain}(\phi_i), i>0\} <\infty.$$

\item If $S_1=\{\phi_i^1\}$ and $S_2=\{\phi_i^2\}$ are $(1,\underline{\underline{\nu}})$-homotopy sequences of mappings into $(\mathcal{Z}_n(M,\underline{\underline{\nu}},\mathbb{Z}_2),\{0\})$, then $S_1$ is homotopic with $S_2$ if and only if there is a sequence a positive real numbers $\delta_1,\delta_2,...$ such that $\phi_i^1$ is homotopic to $\phi_i^2$ in $(\mathcal{Z}_n(M,\mathbf{M}),\{0\})$  with fineness $\delta_i$ for $i>0$ and $\lim_i \delta_i =0$.

"To be homotopic with" is an equivalence relation on the set of $(1,\mathbf{M})$-homotopy sequences of mappings into $(\mathcal{Z}_n(M,\mathbb{Z}_2),\{0\})$. An equivalence class of such sequences is a $(1,\underline{\underline{\nu}})$-homotopy class of mappings into $(\mathcal{Z}_n(M,\underline{\underline{\nu}}),\{0\})$. The space of these equivalence classes is denoted by $$\pi_1^{\sharp}(\mathcal{Z}_n(M,\mathbb{Z}_2),\{0\}).$$

\end{enumerate}
\end{definition}

\begin{remarque} \label{isohomo}
By \cite[Theorem 4.6]{P}, 
$$\pi_1^{\sharp}(\mathcal{Z}_n(M,\mathbf{M},\mathbb{Z}_2),\{ 0\})\text{, }  \pi_1^{\sharp}(\mathcal{Z}_n(M,\mathcal{F},\mathbb{Z}_2),\{ 0\}) \text{ and } \pi_1(\mathcal{Z}_n(M,\mathcal{F},\mathbb{Z}_2),\{ 0\})$$ are all naturally isomorphic. 

\end{remarque}

Given $\Pi \in \pi_1^{\sharp}(\mathcal{Z}_n(M,\mathbf{M},\mathbb{Z}_2),\{0\})$, consider the function $\mathbf{L}: \Pi \to [0,\infty]$ defined such that if $S=\{\phi_i\}_{i \in \mathbb{M}} \in \Pi$ and $\phi_i : I(1,n_i) \to \mathcal{Z}_n(M,\mathbb{Z}_2)$, then 
$$\mathbf{L}(S) = \limsup_{i\to \infty} \max\{\mathbf{M}(\phi_i(x)) : x \in I(1,n_i)\}.$$
The width of $\Pi$ is then the following quantity:
$$\mathbf{L}(\Pi) = \inf\{\mathbf{L}(S) ; S \in \Pi\}.$$
A sequence $S=\{\phi_i :I(1,n_i) \to \mathcal{Z}_n(M,\mathbb{Z}_2)\}_{i} \in \Pi$ is said to be critical for $\Pi$ if $\mathbf{L}(S)=\mathbf{L}(\Pi)$. Define the critical set $\mathbf{C}(S) \subset \mathcal{V}_n(M)$ of $S \in \Pi$ as
\begin{align*}
\mathbf{C}(S) = \{V ;  \quad& \exists \{i_j\}_j, \exists \{x_j\}, i_j \to \infty, x_j \in I(1,n_{i_j}) ,\\  
& V = \lim\limits_{j\to \infty} |\phi_{i_j}(x_j)| \text{ and } ||V||(M)=\mathbf{L}(S)\}.
\end{align*}

We recall that one can define the $\mathbf{F}$-metric on $\mathcal{V}_n(M)$ (\cite[Chapter 2, 2.1, (19)]{P}). The $\mathbf{F}$-metric induces the weak topology on any set of varifolds whose mass is bounded by a certain constant. We now give Pitts' definition of almost minimizing varifolds (see \cite[Chapter 3]{P}). Let $U$ be an open set of $\mathbb{R}^L$. Denote by $\mathcal{Z}_n(M,M\backslash U,\mathbb{Z}_2)$ the set of currents $T\in \mathcal{Z}_n(M,\mathbb{Z}_2)$ such that $\spt (\partial T) \subset M\backslash U$. For each pair of positive numbers $\epsilon$ and $\delta$, we define
$$\mathfrak{a}_n(U,\epsilon,\delta,\mathbb{Z}_2)$$
to be the set of all currents $T\in \mathcal{Z}_n(M,M \backslash U,\mathbb{Z}_2)$ with the following property. If $T=T_0,$ $T_1$, ... , $T_m\in \mathcal{Z}_n(M,M \backslash U,\mathbb{Z}_2)$,
$$\spt (T-T_i) \subset U \quad \text{for } i=1, ... , m$$
$$\mathcal{F}(T_i,T_{i-1}) \leq \delta  \quad \text{for } i=1, ... , m$$
$$\mathbf{M}(T_i)\leq \mathbf{M}(T)+\delta  \quad \text{for } i=1, ... , m$$
then $\mathbf{M}(T)-\mathbf{M}(T_m)\leq \epsilon$.

\begin{definition}
We say that $V\in \mathcal{V}_n(M)$ is $\mathbb{Z}_2$ almost minimizing in $U$ if and only if for each $\epsilon>0$, there exist a $\delta>0$ and a current $T\in \mathfrak{a}_n(U,\epsilon,\delta,\mathbb{Z}_2)$ such that $\mathbf{F}(V,|T|) \leq \epsilon$.
\end{definition}

\begin{remarque} \label{modulo}
As we deal with flat chains modulo $2$, we will use the "modulo $2$" versions of results in \cite{P}, \cite{MaNeWillmore}, \cite{Zhou} (see \cite{MaNeinfinity} for a detailed explanation). We assume that $2\leq n\leq 6$. Note that Pitts proved \cite[Theorem 7.12]{P} for $2\leq n \leq 5$ but because of the curvature estimates in \cite[(7.2)]{SchoenSimon}, it still holds true for $n=6$.
\begin{enumerate}
\item If $V$ is a stationary $n$-varifold in $\mathcal{V}_n(M)$ and is $\mathbb{Z}_2$ almost minimizing in small annuli around each point, then $\spt V$ is a smooth embedded minimal hypersurface (\cite[Theorem 2.11]{MaNeinfinity}). This is the "modulo $2$" version of Theorem $7.12$ in \cite{P}.
\item The interpolation results \cite[Theorem 13.1]{MaNeWillmore}, \cite[Theorem 5.5]{Zhou} are true for $\mathbb{Z}_2$ (see \cite[Theorem 3.9]{MaNeinfinity} for a statement in the "modulo 2" setting).
\item \cite[Theorem 5.8]{Zhou} is also still true for $\mathbb{Z}_2$.
\end{enumerate}

\end{remarque}

Finally, we describe the isomorphism 
\begin{equation} \label{iso}
H_{n+1}(M,\mathbb{Z}_2)  \simeq   \pi_1(\mathcal{Z}_n(M,\mathcal{F},\mathbb{Z}_2), \{0\} )
\end{equation}
constructed in \cite[Section 3]{Alm1}. Actually, Almgren treated the case where the coefficient group is $G=\mathbb{Z}$, but as Pitts noted in \cite{P}, all the methods extend to the case $G=\mathbb{Z}_2$. There is a number $\mu>0$ such that if $T\in \mathbf{I}_k(M,\mathbb{Z}_2)$ has no boundary and $\mathcal{F}(T)\leq \mu$, then there is an $S\in \mathbf{I}_{k+1}(M,\mathbb{Z}_2)$ such that $\partial S =T$ and $$\mathbf{M}(S)=\mathcal{F}(T)=\inf\{\mathbf{M}(S') ; S'\in \mathbf{I}_{k+1}(M,\mathbb{Z}_2) \text{ and } \partial S' = T \}.$$
Such an $S$ is called an $\mathcal{F}$-isoperimetric choice for $T$. A chain map 
$$\Phi : I(1,j)\to \mathbf{I}_*(M)$$
of degree $n$ is a graded homomorphism $\Phi $ of degree $n$, such that $\partial \circ \Phi = \Phi \circ \partial$  and $\Phi(\alpha)$ is $\mathcal{F}$-isoperimetric for $\Phi(\partial \alpha)$ where $\alpha \in I(1,j)_1$. Now let $$[f]\in \pi_1(\mathcal{Z}_n(M,\mathcal{F},\mathbb{Z}_2), \{0\} )$$
be a class whose one of the representative maps is 
$$f : (I,\{0,1\}) \to (\mathcal{Z}_n(M,\mathcal{F},\mathbb{Z}_2), \{0\} ).$$
The isomorphism $F: \pi_1(\mathcal{Z}_n(M,\mathcal{F},\mathbb{Z}_2), \{0\} ) \to H_{n+1}(M,\mathbb{Z}_2)$ is defined as follows: take any integer $m$ sufficiently large, there is a chain map
$$\Phi : I(1,m)\to \mathbf{I}_*(M,\mathbb{Z}_2)$$
of degree $n$, such that 
$$\Phi([x])=f(x) \quad \forall [x] \in I(1,m)_0.$$
Then 
$$\sum_{\alpha \in I(1,m)_1} \Phi(\alpha)$$ 
is a cycle in $\mathbf{I}_{n+1}(M,\mathbb{Z}_2)$ which depends neither on $m$ if the latter is chosen large enough, nor on the representative $f$. Because the homology groups of the chain complex $\mathbf{I}_*(M,\mathbb{Z}_2)$ are isomorphic with the singular homology groups of $M$ with coefficient group $\mathbb{Z}_2$, it makes sense to set
$$F([f]) = \big[  \sum_{\alpha \in I(1,m)_1} \Phi(\alpha)  \big] \in H_{n+1}(M,\mathbb{Z}_2).$$

\subsection {} \textbf{Min-max theory in the continuous setting}

The theory of Almgren and Pitts deals with discrete sweepouts, and this can bring some technical complications when constructing explicit sweepouts. For this reason, De Lellis and Tasnady \cite{Tasnady} wrote a version of this theory in the continuous setting, based on ideas in \cite{Smith} and \cite{C&DL}. In this subsection, we recall the basic notions in this setting.

Let $(M^{n+1},g)$ be a closed Riemannian manifold. In what follows, the topological boundary of a subset of $M$ will be denoted by $\partial$. Consider an open subset $N\subset M$ whose boundary $\partial N$, when non-trivial, is a rectifiable hypersurface of finite $n$-dimensional Hausdorff measure. Suppose also that if $\partial N$ is non empty, each of its connected components $C$ separates $M$ (i.e. $M\backslash C$ has two connected components). The notation for the $m$-dimensional Hausdorff measure will be $\mathcal{H}^m$. Take $a<b\in \mathbb{R}$, $k\in \mathbb{N}$. 

\begin{definition} \label{definition}
A family of $\mathcal{H}^n$-measurable closed subsets $\{\Gamma_t\}_{tÊ\in [a,b]^k}$ in $N\cup \partial N$ with finite $\mathcal{H}^n$-measure is called a generalized smooth family if
\begin{itemize}
\item for each $t $ there is a finite subset $P_t \subset N$ such that $\Gamma_t\cap N$ is a smooth hypersurface in $N\backslash P_t$,
\item $t \mapsto \mathcal{H}^n(\Gamma_t)$ is continuous and $t \mapsto \Gamma_t$ is continous in the Hausdorff topology,
\item $\Gamma_t \to \Gamma_{t_0}$ smoothly in any compact $U\subset\subset N\backslash P_{t_0}$ as $t\to t_0$. 
\end{itemize}
If $\partial N= \varnothing$, a generalized smooth family $\{\Sigma_t\}_{t\in [a,b]}$ is called a continuous sweepout of $N$ if there exists a family of open subsets $\{\Omega_t\}_{t\in [a,b]}$ of $N$ such that
\begin{enumerate} [label=(\roman*)]
\item $(\Sigma_t\backslash \partial\Omega_t) \subset P_t$ for any $t\in(a,b]$,
\item $\mathcal{H}^{n+1}(\Omega_t\backslash \Omega_s) + \mathcal{H}^{n+1}(\Omega_s\backslash \Omega_t) \to 0$, as $s\to t$,
\item $\Omega_a =\varnothing$, and $\Omega_b= N$.
\end{enumerate}
When $\partial N \neq \varnothing$, a continuous sweepout is required to satisfy the above conditions, except that $\partial\Omega_t$ denotes the boundary of $\Omega_t$ in $N$ and (iii) is replaced by 

$(iii)$ $\Omega_b =N $, $\Sigma_a = \partial N$ and $\Sigma_t \subset N$ for $t \in (a,b]$.
\end{definition}

\begin{definition} \label{homotopic}
When $\partial N=\varnothing$, two continuous sweepouts $\{\Sigma_t^1\}_{t\in [a,b]}$ and $\{\Sigma_t^2\}_{t\in [a,b]}$ are homotopic if there is a generalized smooth family $\{\Gamma_{(s,t)}\}_{(s,t)\in [a,b]^2}$, such that $\Gamma_{(a,t)} =\Sigma_t^1$ and $\Gamma_{(b,t)} =\Sigma_t^2$. When $\partial N \neq \varnothing$, we also require the following condition: $\Gamma_{(s,t)} \subset N$ for $t\in (a,b]$ and there exists a small $\alpha>0$ such that $\Gamma_{(s,t)}=\Gamma_{(a,t)}$ for $(s,t)\in [a,b]\times [a,a+\alpha]$. 

A family $\Lambda$ of continuous sweepouts is called homotopically closed if it contains the homotopy class of each of its element.

\end{definition}

\begin{remarque}
\begin{enumerate}
\item Definitions \ref{definition} and \ref{homotopic} are adapted from the definitions in \cite{Tasnady}. Little modifications are done compared with \cite{Tasnady} and \cite{Zhou} because we have to deal with a non-smooth boundary $\partial N$.
\item A referee pointed out that Definition \ref{homotopic} (see Definition 2.5 in \cite{Tasnady}) may not be the most natural definition of homotopy classes: it may be more intuitive to impose, for each $s_0\in[a,b]$, that the intermediate family $\{\Gamma_{(s_0,t)}\}_{t \in [a,b]}$ is a sweepout as well, instead of just requiring $\{\Gamma_{(s,t)}\}_{(s,t) \in [a,b]^2}$ to be a generalized smooth family. We decided to keep the original definition, even though the proofs of existence and regularity in \cite{Tasnady} would still work for the stronger definition because all deformations they used to construct competitors preserve the homotopy class in the strong sense.
\end{enumerate}
\end{remarque}

If $\Lambda$ is a homotopically closed family of continuous sweepouts, the width of $\Lambda$ in $N$ is defined as the min-max quantity
$$W(N,\partial N, \Lambda) = \inf_{\{\Sigma_t\}\in \Lambda} \max_t \mathcal{H}^n(\Sigma_t).$$
A sequence $\{\{\Sigma_t^k\}_{t\in [a,b]}\}_{k\in \mathbb{N}} \subset \Lambda$ is called a minimizing sequence if 
$$\max_t \mathcal{H}^n(\Sigma_t^k) \to W(N,\partial N,\Lambda)  \text{  as } k\to \infty.$$
A sequence of slices $\{\Sigma_{t_k}^k\}_{k \in \mathbb{N}}$ is called a min-max sequence if $$\mathcal{H}^n(\Sigma_{t_k}^k) \to W(N,\partial N, \Lambda) \text{  as } k\to \infty.$$

\subsection {} \textbf{Some min-max constructions}

Let $(M,g)$ be a closed $(n+1)$-manifold with $2\leq n \leq 6$. For a two-sided hypersurface $\Sigma$, the convention is that the mean curvature vector is $-H(\Sigma) \nu$, where $\nu$ is a continuous choice of unit normal vector of $\Sigma$, called outward unit normal. A hypersurface with boundary will be said to be generically immersed if it is the image of an immersion with normal crossings, in other words a "generic" immersion (see Definition 3.1 in \cite{GG}).

\begin{definition} \label{wmconvex}
Let $N$ be an open subset of $M$ and suppose that $\partial N$ is a non-empty rectifiable hypersurface. The boundary $\partial N$ is said to be piecewise smooth mean convex if it satisfies the following property: 

\begin{enumerate} [label=(\roman*)]
\item there is a generically immersed compact two-sided $n$-submanifold $F$ with smooth boundary such that $N$ is a connected component of $M\backslash F$ and $$\partial N\cap \partial F =\varnothing,$$
\item $F$ has positive mean curvature at every point of $\partial N $ with respect to any outward unit normal determined by $N$. 
\end{enumerate}
More generally, a rectifiable hypersurface $A$ is said to be piecewise smooth mean convex if there is an open set $N\subset M$ such that $A$ is an open subset of $\partial N$, the first point $(i)$ is true and the second point $(ii)$ is satisfied for every point of $A$.

\end{definition}

We emphasize that (piecewise smooth) mean convexity means that the mean curvature is strictly positive. The first property of a piecewise smooth mean convex boundary is that it acts as a barrier for area minimizing problems.

\begin{prop} \label{vector field}
Let $(M,g)$ be a closed $(n+1)$-manifold and $N$ an open set of $M$ such that the boundary $\partial N$ is non-empty and piecewise smooth mean convex. Then there is a positive number $\delta>0$ and a vector field $\mathbf{V}$ in $N$ such that 
\begin{enumerate} [label=(\roman*)]
\item $\sum_{i=1}^n \langle \nabla_{e_i}\mathbf{V} , e_i \rangle \leq 0$ for every orthonormal family $\{e_1,...,e_n\}$ of vectors whose base point is in $N$, 
\item if $p\in N$ and $d(p,\partial N)=d \leq \delta$, then $$\langle \mathbf{V}(p), \frac{\partial}{\partial s}\bigg|_{s=d}   \exp_q(-s\nu) \rangle>0,$$ 
where $q$ is any point of $\partial N$ such that $d(p,q)=d$ and $\nu$ is the outward unit normal of $\partial N$ at $q$ (which is well defined).
\end{enumerate} 
Consequently, if $\Sigma$ is a hypersurface in $N$, then there is a diffeomorphism $\Psi$ of $N$ such that $\mathcal{H}^n(\Psi(\Sigma))\leq\mathcal{H}^n(\Sigma)$ and $d(\Psi(\Sigma), \partial N) \geq\delta/2$.

\end{prop}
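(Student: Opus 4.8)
The plan is to construct the vector field $Z$ by integrating the negative gradient of the distance function to $\partial N$, suitably cut off and damped, and then to verify the two infinitesimal properties from the mean convexity hypothesis. First I would set $\rho(p) = d(p,\partial N)$ and recall that near a smooth mean convex piece $f$ of $\partial N$, the function $\rho$ is smooth on a one-sided collar $\{0 < \rho < \delta_0\}$, with $\nabla \rho = -\partial_s|_{s=\rho}\exp_q(-s\nu)$ the inward unit vector field, and with $\Delta \rho = -\sum_{i=1}^n \kappa_i/(1+\kappa_i \rho) + (\text{terms} )$ governed by the principal curvatures $\kappa_i$ of the level sets; the crucial point is that at $\rho = 0$ one has $\Delta\rho = -H(\partial N) < 0$ strictly, by the mean convexity in Definition \ref{wmconvex}(ii), and by continuity $\Delta\rho \le -c < 0$ on a possibly smaller collar $\{0 \le \rho \le \delta\}$. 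I would take $\delta$ small enough that this holds uniformly over all of $\partial N$ (using compactness of $\partial N$ and the fact that $\partial N$ stays away from $\partial F$).

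Next I would define $Z = -\chi(\rho)\,\nabla\rho$ on the collar, where $\chi: [0,\infty) \to [0,1]$ is a smooth function with $\chi \equiv 1$ on $[0,\delta/2]$, $\chi \equiv 0$ on $[\delta,\infty)$, and $\chi' \le 0$; extend $Z$ by zero to the rest of $N$. For property (ii), on the inner part of the collar $\chi \equiv 1$ so $Z = -\nabla\rho$ points strictly inward, i.e. $\langle Z(p), \partial_s|_{s=d}\exp_q(-s\nu)\rangle = \langle -\nabla\rho, -\nabla\rho\rangle = 1 > 0$ — wait, one must be careful with signs: $\partial_s|_{s=d}\exp_q(-s\nu)$ is the velocity of the geodesic leaving $\partial N$ inward, which equals $-\nabla\rho(p)$ evaluated appropriately, so $\langle Z(p), -\nabla\rho(p)\rangle = \chi(\rho)|\nabla\rho|^2 = \chi(\rho) > 0$ for $\rho \le \delta$; since we only need (ii) for $d \le \delta$ and $\chi > 0$ there (after possibly replacing $\delta$ by $\delta/2$ in the statement's $\delta$), this is fine. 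For property (i), I compute $\operatorname{div}_{\{e_i\}} Z := \sum_i \langle \nabla_{e_i} Z, e_i\rangle = -\chi(\rho)\sum_i \langle \nabla_{e_i}\nabla\rho, e_i\rangle - \chi'(\rho)\sum_i \langle e_i,\nabla\rho\rangle^2$. The second term is $\ge 0$ times $-\chi'(\rho) \ge 0$... no: $-\chi'(\rho) \ge 0$ and $\sum_i\langle e_i,\nabla\rho\rangle^2 \ge 0$, so this term is $\ge 0$, which is the wrong sign. Here I need the refinement: choose $\chi$ convex near where it matters, or rather observe $\sum_i\langle e_i,\nabla\rho\rangle^2 \le |\nabla\rho|^2 = 1$, so $-\chi'(\rho)\sum_i\langle e_i,\nabla\rho\rangle^2 \le -\chi'(\rho)$, while the first term is $-\chi(\rho)\operatorname{tr}(\text{second fundamental form of level set})$; the full Laplacian bound $\Delta\rho \le -c$ does not immediately control the partial trace over an arbitrary $n$-frame. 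The fix is to use that $\nabla^2\rho$ restricted to $(\nabla\rho)^\perp$ is the shape operator of the level set (negative semidefinite... actually it can have any sign) — so instead I would bound the generalized divergence directly: $\sum_i\langle\nabla_{e_i}Z,e_i\rangle$ over any $n$-frame equals $\operatorname{tr}(\nabla Z) - \langle\nabla_{e_0}Z,e_0\rangle$ for a completion to an orthonormal $(n+1)$-frame, and one shows $\operatorname{tr}(\nabla Z) = \operatorname{div}Z = -\chi\Delta\rho - \chi' \le -\chi\Delta\rho \le c\,\chi$, which is positive — still wrong sign! So the honest approach is: take $\chi$ with $\chi' \le 0$ AND make $\chi$ decay fast enough that the damping dominates; more precisely, one wants $\nabla Z$ itself negative semidefinite as a bilinear form, which forces choosing $\chi$ so that $-\chi\nabla^2\rho - \chi'\,d\rho\otimes d\rho \preceq 0$ — achievable because along $\nabla\rho$ the needed inequality reads $-\chi'(\rho) \le 0$ (automatic) while in the orthogonal directions the level-set shape operator of a mean convex foliation is, after shrinking $\delta$ using that the total trace is $\le -c$, controllable, though not necessarily sign-definite.

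Given that subtlety, the cleaner route — and the one I expect the paper takes — is to not demand $\nabla Z \preceq 0$ pointwise but only the weaker $n$-trace inequality (i), and to absorb the bad $-\chi' |\nabla\rho|^2_{\text{tangential}}$ term by choosing $\delta$ so small that the strict inequality $\Delta\rho \le -c$ gives enough room: pick $\chi$ with $|\chi'| \le C/\delta$ on the transition region $[\delta/2,\delta]$ where it must drop from $1$ to $0$, but arrange the geometry so the strictly negative $\Delta \rho$ term, which on that region is $\le -c\,\chi$, does not help — hence the genuine fix is to choose $\rho$ replaced by a concave reparametrization $u = \eta(\rho)$ with $\eta' > 0$, $\eta'' \ll 0$, so that $\nabla^2 u = \eta'\nabla^2\rho + \eta'' d\rho\otimes d\rho$ is negative definite once $\eta''$ is sufficiently negative (this uses only that $|\nabla^2\rho|$ is bounded on the collar), and then set $Z = -\chi(\rho)\nabla u = -\chi(\rho)\eta'(\rho)\nabla\rho$: now $\sum_i\langle\nabla_{e_i}Z,e_i\rangle = -\chi\sum_i\nabla^2u(e_i,e_i) - \chi'\eta'\sum_i\langle e_i,\nabla\rho\rangle^2 \le -\chi' \eta'$, and choosing $\chi$ monotone so $\chi' \le 0$ makes this $\le 0$, giving (i); property (ii) holds as before since $Z$ is a negative multiple of $\nabla\rho$ with positive coefficient $\chi\eta'$ on $\{\rho \le \delta\}$. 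Finally, for the last sentence I would integrate the flow $\Phi = \Phi_T$ of $Z$ for a time $T$ large enough to push everything past distance $\delta/2$: property (ii) guarantees the flow moves points monotonically inward until they leave the collar, so after finite time every point of $\Sigma$ is at distance $\ge \delta/2$ from $\partial N$ and the flow stays in $N$; property (i) gives $\frac{d}{dt}\mathcal{H}^n(\Phi_t(\Sigma)) = \int_{\Phi_t(\Sigma)} \operatorname{div}_{T\Sigma_t} Z \, d\mathcal{H}^n \le 0$ by the first variation formula, since $\operatorname{div}_{T\Sigma_t}Z = \sum_{i=1}^n\langle\nabla_{e_i}Z,e_i\rangle$ for $\{e_i\}$ an orthonormal frame of the tangent space. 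The main obstacle is precisely this sign bookkeeping in (i): making the generalized divergence nonpositive everywhere on $N$ while keeping $Z$ strictly inward-pointing near $\partial N$ — resolved by the concave reparametrization of the distance function combined with a monotone cutoff, which is exactly why one needs $\delta$ small (so $|\nabla^2\rho|$ is bounded and the mean convexity $\Delta\rho < 0$ persists on the whole collar).
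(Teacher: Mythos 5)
Your construction is built on $\rho = d(\cdot,\partial N)$, and the entire argument — smoothness of $\rho$ on a collar, the identity $\nabla^2\rho\restriction_{(\nabla\rho)^\perp} = $ shape operator of the level set, the bound $\Delta\rho \le -c$, the final concave reparametrization — implicitly assumes $\rho$ is $C^2$ on $\{0<\rho<\delta\}$. But the hypothesis of the proposition is that $\partial N$ is only \emph{piecewise} smooth (it comes from a generic immersion, so it has edges and lower-dimensional strata where smooth pieces cross), and that is precisely where your argument breaks: near a corner of $\partial N$ the distance function is merely Lipschitz, $\nabla\rho$ is discontinuous, and $\nabla^2\rho$ is unbounded on any collar that reaches the edge (the cut locus of $\partial N$ accumulates on the singular set). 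Consequently your $Z = -\chi(\rho)\nabla\rho$ is not a smooth vector field in $N$, and the concave reparametrization $u=\eta(\rho)$ cannot save it, since you yourself note that it requires $|\nabla^2\rho|$ bounded on the collar — which fails exactly in the regime you would need it. The non-smooth corners are the whole content of the proposition; for a smooth mean convex $\partial N$ this is already \cite[Lemma 2.2]{MaNe}.

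The paper avoids the distance function to $\partial N$ entirely. Instead it stratifies the singular set of $F$ into pieces $P_k$ of dimension $k=0,\dots,n-1$, places a small \emph{smooth} auxiliary hypersurface $S_k$ around each stratum (together with $S_n = F$ itself), and for each $S_k$ runs the Marques--Neves normal-flow construction: $Y(S_k)$ is supported in the collar $\tilde{\exp}([0,2a_k]\times S_k)$ where $\tilde{\exp}$ is the normal exponential map of the \emph{smooth} hypersurface $S_k$, so no regularity issue arises. Each $Y(S_k)$ separately satisfies the $n$-trace inequality (i), and since that inequality is linear in the vector field, so does the weighted sum $Z = Y(S_n) + \sum_j \lambda_j Y(S_j)$. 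The coefficients $\lambda_{n-1},\dots,\lambda_0$ are then chosen inductively (large) so that, passing from the top stratum down to the $0$-stratum, the combination points strictly inward on progressively larger neighborhoods of the singular set, eventually on all of $\partial N$; this gives (ii). In short: where your plan needs a single distance function to work across the corners, the paper replaces it by a family of smooth barrier hypersurfaces whose normal flows collectively do the job — that substitution is the missing idea.
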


\begin{proof}
This proposition is an extension of Lemma 2.2 in \cite{MaNe}. The idea for the piecewise smooth case is that, starting from the vector field constructed in \cite{MaNe}, we will add some extra vector fields near the non-smooth parts of $\partial N$ so that the final vector field is always pointing inward on $\partial N$. Recall that on smooth pieces of $\partial N$, the outward unit normal is well determined. By hypothesis, there is a two-sided hypersurface $F$ such that $N$ is a connected component of $M\backslash F$ and has positive mean curvature at every smooth point of $\partial N $ with respect to the outward unit normal determined by $N$. By reducing $F$ if necessary, we can suppose that the mean curvature of $F$ is positive everywhere. Let us consider an immersed compact hypersurface $S \subset M$ with boundary endowed with a continuous choice of outward unit normal vector $\nu_S$ and suppose that $S$ has positive mean curvature with respect to $\nu_S$. We can associate to $S$ a positive real number $a(S)$ such that 
$$\tilde{\exp} : [0,2a(S)]\times S \to {M}$$
$$\tilde{\exp}(r,x) = \exp_x(-r\nu_S)$$
is a local diffeomorphism, and the surface with boundary $C_r=\{r\}\times S$ has positive mean curvature. As shown in \cite{MaNe}, when $S$ is embedded there is a vector field on $\tilde{\exp} ([0,2a(S)]\times S)$, called $X(S)$, such that for all $p= \tilde{\exp}(r,x)$, $X(S)_p=\psi(r) \frac{\partial}{\partial r}$ where $\psi$ is a function positive if $r<a(S)$ and $\psi(p)=0$ if $r> a(S)$. Moreover for every orthonormal family $\{e_1,...,e_n\}$ of vectors in $\tilde{\exp} ([0,2a(S)]\times S)\cap N$, 
\begin{equation} \label{reduce}
\sum_{i=1}^n \langle \nabla_{e_i}X(S) , e_i \rangle \leq 0.
\end{equation} 
Suppose additionally that:
\begin{equation} \label{fgh}
N \cap S = \varnothingÊ\text{ and } \partial N \cap \partial S = \varnothing.
\end{equation}
The hypersurface $F$ for instance has this property. We will always suppose $a(S)$ chosen small enough so that 
\begin{equation} \label{discontinu}
N\cap \tilde{\exp} ([0,2a(S)]\times \partial S) = \varnothing.
\end{equation} 
Since the construction of $X(S)$ is local in the sense that $X(S)_p$ at $p=\tilde{\exp}(r,x)$ only depends on $r$ and a neighborhood of $x$ in $S$, we can define such a vector field $X(S)$ on $\tilde{\exp} ([0,2a(S)]\times S)$ when $S$ is merely immersed: the domain where this vector field is defined overlaps itself when $S$ is non embedded, so here at any given point in $\tilde{\exp} ([0,2a(S)]\times S)$, $X(S)$ is the sum of all the local contributions. By linearity, (\ref{reduce}) remains true. We extend $X(S)$ by $0$ outside $\tilde{\exp}([0,2a(S)]\times S)$ and the new vector field is called $Y(S)$. Let us check that this extension $Y(S)$ is well-defined and smooth in $N$. Note that when $\Omega$ is an embedded domain in $S$, we can take $a(\Omega) = a(S)$ and if we extend $X(\Omega)$ by $0$ outside $\tilde{\exp}([0,2a(S)]\times \Omega)$, the new vector field $Y(\Omega)$ can be non-smooth only on $\tilde{\exp} ([0,2a(S)]\times \partial \Omega) \cup \Omega$. For any $p\in M$, there are only finitely many points $\{x_j\}_{j\in J}\subset S$ such that $\tilde{\exp}(r_j,x_j)=p$ for some $r_j\in[0,2a(S)]$ and let $\Omega_j$ be an embedded neighborhood of $x_j$ in $S$. If moreover $p\in N$, then according to (\ref{fgh}) and (\ref{discontinu}), every $r_j$ is positive and each $x_j$ is contained in the interior of $S$, so $\Omega_j$ can be chosen relatively compact in the interior of $S$. By our definition for all $q\in N$ close to $p$, 
\begin{align*}
Y(S)_q & = 0 &\text{ if } J=\varnothing,\\
& = \sum_{j\in J} Y(\Omega_j)_q &\text{ otherwise}
\end{align*}
and from this expression it becomes clear that ${Y}(S)$ is well-defined and smooth in $N$. The restriction of this vector field to $N$ is still called $Y(S)$.

Now, we want to construct a vector field in $N$ similar to the one constructed in the proof of Theorem 2.1 in \cite{MaNe}. The vector field $Y(F)$ seems to be a good candidate but the flow associated to this vector field in $M$ perhaps "leaves" $N$ around the non-smooth parts of $\partial N$: the reader can think of the situation where locally at $x\in\partial N$, $\partial N$ is made of two half-disks intersecting with an interior angle smaller than $\pi/2$, then $Y(F)$ may not point inside $N$ at $x$ if the contributions of the two half-disks are very different. Thus we have to modify $Y(F)$ near the parts of $\partial N$ where smooth pieces intersect, so that the flow stays in $N$. For $0 \leq k\leq n-1$, let $P_k$ be the $k$-dimensional part of $F$, namely the set of points $p\in F$ locally lying in at least $n+1-k$ distinct hypersurfaces. In fact, $P_k$ is the image by an immersion $\varphi_k$ of a $k$ dimensional manifold $\tilde{P}_k$, because $F$ is generically immersed. Denote by $UM\restriction_{P_k}$ the restriction of the unit tangent bundle of $M$ to ${P_k}$. Note that since $F$ is two-sided, we can find a constant $\epsilon>0$ with this property: for all $0 \leq k\leq n$ there is a a smooth function
$$\eta_k : \tilde{P}_k \to UM\restriction_{P_k}$$
such that for all $x \in \varphi_k^{-1}((P_k\backslash P_{k-1}) \cap \partial N)$, $\eta_k(x)$ is a unit vector orthogonal to $d\varphi_k(T_x\tilde{P}_k)$ and 
\begin{equation} \label{pointing inwards}
\langle \eta_k(x), \nu \rangle>\epsilon
\end{equation}
for each outward normal $\nu$ of a smooth piece of $\partial N$ touching $\varphi_k(x)$. Let $\mathbf{inj}(M)$ be the injectivity radius of $M$. Consider $x\in \tilde{P}_k$ ($0\leq k\leq n-1$), we use momentarily the notation $p=\varphi_k(x) $. Denote by $D_k(x)$ the $(n+1-k)$-disk $$\{\exp_{p}(rv) ; r<\mathbf{inj}(M) \text{ and } v \perp d\varphi_k(T_x\tilde{P}_k) \}.$$ For $r>0$ and $0 \leq k\leq n-1$, consider 
$$S_k (r)= \{y\in D_k(x) ; x\in \tilde{P}_k, d(y,p)\leq r^2 \text{ and } d(y,\exp_p(-r\eta_k(x)))=r\}.$$
Define also $S_{n} = F$.

We can choose a sequence $(r_0,...,r_{n-1})$ of small enough positive numbers so that the following properties are satisfied:
\begin{enumerate} [label=(\roman*)]
\item for each $0 \leq k\leq n$, $S_k = S_k(r_k)$ is a smooth immersed two-sided hypersurface with boundary verifying (\ref{fgh}) and $\eta_k$ determines an outward unit normal still denoted by $\eta_k$,
\item for $0 \leq k\leq n-1$, $S_k$ has positive mean curvature with respect to $\eta_k$, 
\item the $a_k = a(S_k)$ ($0\leq k\leq n$) are chosen so that we can find positive numbers $b_k$ ($0\leq k\leq n$) satisfying the following conditions. $Y(S_0)$ points inwards on $\partial N$ and points strictly inwards on $\{y\in \partial N ; d(y,P_0) < b_0\}$; for $0 \leq k\leq n-1$, $Y(S_{k+1})$ points inwards on $ \{y \in \partial N; d(y,P_k) > b_k/2\}$ and points strictly inwards on $\{y \in \partial N; d(y,P_k) > b_k/2, d(y,P_{k+1})<b_{k+1}\}$.
\end{enumerate} 
We briefly justify these properties. Observe that Definition \ref{wmconvex} implies: 
\begin{itemize}
\item for $0 \leq k\leq n$, $\varphi_k(\partial \tilde{P}_k) \cap (N\cup \partial N) = \varnothing$, 
\item at any point $q\in (P_k\backslash P_{k-1}) \cap \partial N$, there is a diffeomorphism $\mathcal{D} :  \mathbb{R}^{n+1} \to B(q,r_q)$ such that 
$$\mathcal{D}(\bigcup_{i=1}^{n+1-k} \{ x_i=0\}) = B(q,r_q) \cap F$$
provided $r_q$ is small,
\item $B(q,r_q)\cap N$ is exactly one of the connected components of $B(q,r_q)\backslash F$ provided $r_q$ is small.
\end{itemize}
Item (i) follows from that observation and from \ref{pointing inwards}. For $0\leq k \leq n-1$ and $r_k$ very small, $S_k$ looks like part of a thin "k-tube" pasted along $P_k$ which has bounded curvature, so $S_k$ has large mean curvature as claimed in (ii). Since $S_0$ is a finite union of subsets of little n-spheres lying on the vertices of $\partial N$ and curved toward $-\eta_0$, if $a(S_0)$ is small, $Y(S_0)$ indeed points inwards on $\partial N$ and strictly inwards in a certain neighborhood $\{y\in \partial N ; d(y,P_0) < b_0\}$ of $P_0$. Then, we choose $a(S_1)$ small so that 
$$\tilde{\exp} ([0,2a(S_1)]\times S_1) \cap \{y\in N ; d(y,P_0) > b_0/2\}$$
does not overlap: the argument becomes local and by reducing $a(S_1)$ if necessary, $Y(S_1)$ points inwards on $\{y\in \partial N ; d(y,P_0) > b_0/2\}$ and strictly inwards on a neighborhood $$\{y \in \partial N; d(y,P_0) > b_0/2, d(y,P_{1})<b_{1}\}$$ of $P_1\cap \{y\in \partial N ; d(y,P_0) > b_0/2\}$. We continue by induction to check (iii) (see Figure (\ref{illustratif})).

\begin{figure} 
\hbox{\hspace{-20mm}
\includegraphics[scale=0.6]{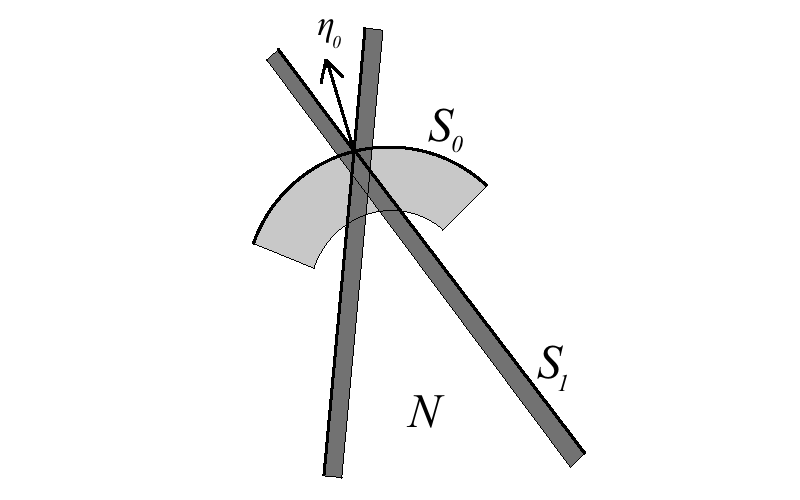}}
\caption{The pale grey region represents $\{Y(S_0)>0\}$, whereas the deeper grey region corresponds to $\{Y(S_1)>0\}$.} The latter overlaps in $\{Y(S_0)>0\}$ but not in $N\backslash \{Y(S_0)>0\}$.
\label{illustratif}
\end{figure}

For positive constants $\lambda_0, ...,\lambda_{n-1}$ chosen later, set
$$\mathbf{V} = Y(S_n) + \sum_{j=0}^{n-1} \lambda_iY(S_j).$$
By linearity, this vector field automatically satisfies $\sum_{i=1}^n \langle \nabla_{e_i}\mathbf{V} , e_i \rangle \leq 0$ for every orthonormal family $\{e_1,...,e_n\}$ of vectors whose base point is in $N$. Then we choose $\lambda_{n-1}$, ..., $\lambda_{0}$ in this order, in the following way: first take $\lambda_{n-1}$ very large so that $Y(S_n) + \lambda_{n-1} Y(S_{n-1} )$ points strictly inwards at least on $ \{x \in \partial N ; d(x,P_{n-2})> b_{n-2}/2 \}$. This is possible because of point (iii) of the previous properties. Then we choose $\lambda_{n-2}$ so that $Y(S_n) + \lambda_{n-1} Y(S_{n-1} ) +  \lambda_{n-2} Y(S_{n-2} )$ points strictly inwards at least on $ \{x \in \partial N ; d(x,P_{n-3})> b_{n-3}/2 \}$. We continue until $\lambda_0$, which can be chosen large enough so that $\mathbf{V}= Y(S_n) + \sum_{j=0}^{n-1} \lambda_iY(S_j)$ points strictly inwards everywhere on $\partial N$. In this way, there is a positive constant $\delta$ such that if $p\in N$ and $d(p,F)=d \leq \delta$, then $$\langle \mathbf{V}(p), \frac{\partial}{\partial s} \bigg|_{s=d} \exp_q(-s\nu)\rangle>0,$$ 
where $q$ is any point of $F$ such that $d(p,q)=d$ and $\nu$ is the outward unit normal of $F$ at $q$.
This proves the proposition.

\end{proof}

Thanks to the previous proposition, we can check the following version of the min-max theorem.

\begin{theo}  \label{piecew}

Let $(M,g)$ be a closed $(n+1)$-manifold with with $2\leq n \leq 6$, and $N$ an open set of $M$ possibly with a non empty topological boundary. When $\partial N \neq \varnothing$, assume that $\partial N$ is piecewise smooth mean convex. Then for any homotopically closed family $\Lambda$ of sweepouts of $N$, with $$W(N,\partial N,\Lambda)>\mathcal{H}^{n}(\partial N)$$ when $\partial N \neq \varnothing$, there exists a min-max sequence $\{\Sigma_{t_k}^k\}$ of $\Lambda$ converging in the varifold sense to an embedded minimal hypersurface $\Sigma$ (possibly disconnected), contained in $N$. Moreover the $n$-volume of $\Sigma$, if counted with multiplicities, is equal to $W(N,\partial N,\Lambda)$. 

\end{theo}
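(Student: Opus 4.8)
The plan is to reduce Theorem \ref{piecew} to the already-established continuous min-max theory on closed manifolds (as in \cite{Tasnady}, \cite{Zhou}, in the $\mathbb{Z}_2$ setting of Remark \ref{modulo}), handling the boundary case by a ``pushing-in'' argument built from Proposition \ref{vector field}. When $\partial N = \varnothing$ there is nothing new: $N$ is a closed manifold and the statement is precisely the continuous min-max theorem of De Lellis--Tasnady and Zhou, together with Pitts' regularity (Remark \ref{modulo}(1)), which applies since $2\le n\le 6$; the almost-minimizing property in small annuli is extracted along the minimizing sequence in the usual way, and the equality of $\mathcal{H}^n(\Sigma)$ counted with multiplicity to $W(N,\partial N,\Lambda)$ is the standard conclusion. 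So the work is entirely in the case $\partial N \neq \varnothing$.

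When $\partial N \neq \varnothing$, first I would invoke Proposition \ref{vector field} to obtain $\delta > 0$ and the vector field $Z$ on $N$ with $\sum_i \langle \nabla_{e_i} Z, e_i\rangle \le 0$ and $Z$ pointing strictly inward near $\partial N$. The flow $\{\Psi_s\}$ of $Z$ is, for each fixed $s\ge 0$, a diffeomorphism of $N$ that does not increase the $n$-area of any hypersurface (by integrating the first variation formula against the divergence inequality (i)) and, for $s$ large enough depending only on $\delta$, pushes everything into the region $\{d(\cdot,\partial N) \ge \delta/2\}$. Given a minimizing sequence $\{\{\Sigma_t^k\}\}_k \subset \Lambda$, I would replace each sweepout $\{\Sigma_t^k\}$ by $\{\Psi_{s_k}(\Sigma_t^k)\}$ for suitable $s_k$; this is still a generalized smooth family and still a sweepout of $N$ (the associated open sets are $\Psi_{s_k}(\Omega_t^k)$, and condition (iii)' still holds because $\Sigma_a = \partial N$ is fixed pointwise and $\Psi_{s_k}$ may be taken to be the identity near time $a$ by a cutoff in $t$, preserving the homotopy class and hence membership in $\Lambda$). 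Along this modified sequence the max of the areas does not increase, so it is still minimizing, and every min-max slice now lies in the compact region $N_\delta := \{d(\cdot,\partial N)\ge \delta/2\} \subset\subset \interior N$. The strict mean-convexity barrier also guarantees $W(N,\partial N,\Lambda) > \mathcal{H}^n(\partial N)$ is not attained by $\partial N$ itself, so min-max slices are genuinely interior and have area bounded below away from the trivial sweepout.

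Next I would run the interior min-max machinery on this pushed-in minimizing sequence. Using the interpolation/discretization results of Remark \ref{modulo}(2) (\cite[Theorem 5.5]{Zhou}, \cite[Theorem 13.1]{MaNeWillmore} in the $\mathbb{Z}_2$ form) one converts the continuous sweepouts into an Almgren--Pitts homotopy sequence with the same width, extracts a min-max varifold $V = \lim_j |\Sigma_{t_j}^{k_j}|$, and shows by the usual combinatorial argument that $V$ is stationary and is $\mathbb{Z}_2$ almost minimizing in sufficiently small annuli centered at each point of its support. Here the key point that the boundary causes no trouble: since the relevant slices are supported in the compact set $N_\delta$ lying a definite distance $\delta/2$ from $\partial N$, all competitors in the almost-minimizing definition and all annuli can be taken inside $\interior N$, so the argument is identical to the closed case and never sees $\partial N$. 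Then Remark \ref{modulo}(1) gives that $\spt V$ is a smooth embedded minimal hypersurface $\Sigma \subset N_\delta \subset N$, and the standard mass-continuity argument along the min-max sequence yields $\mathcal{H}^n(\Sigma)$ counted with multiplicity equal to $W(N,\partial N,\Lambda)$.

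The main obstacle, and the place requiring care, is verifying that the pushed-in family $\{\Psi_{s_k}(\Sigma_t^k)\}$ is genuinely an element of $\Lambda$ — i.e. that it remains a valid continuous sweepout in the sense of Definition \ref{definition} (the subtle conditions being (i), that the non-$\partial\Omega_t$ part of $\Sigma_t$ stays in a finite set $P_t$, and the boundary condition $\Sigma_a = \partial N$), and that it is homotopic through sweepouts to the original, so that the homotopically-closed $\Lambda$ contains it and the width is unchanged. This is handled by taking a smooth cutoff $\chi(t)$ with $\chi \equiv 0$ near $t=a$ and $\chi \equiv s_k$ for $t$ away from $a$, setting $\Gamma_{(u,t)} = \Psi_{u\chi(t)}(\Sigma_t^k)$ as the homotopy, and checking that the flow of the smooth vector field $Z$ preserves the structural properties of generalized smooth families (smoothness away from the evolved finite sets $\Psi_{s}(P_t)$, continuity of $t\mapsto \mathcal{H}^n$, Hausdorff continuity), which is routine but must be stated. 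A secondary technical point is making sure the area non-increase under $\Psi_s$ is uniform so that $\max_t \mathcal{H}^n(\Psi_{s_k}(\Sigma_t^k)) \le \max_t \mathcal{H}^n(\Sigma_t^k)$ for every $s_k$, which follows directly from integrating $\frac{d}{ds}\mathcal{H}^n(\Psi_s(\Sigma_t^k)) = \int \operatorname{div}_{\Sigma} Z \, d\mathcal{H}^n \le 0$ using property (i) of Proposition \ref{vector field}.
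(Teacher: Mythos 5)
Your proof is correct and takes essentially the same approach as the paper: the author also invokes Proposition \ref{vector field} to obtain the inward-pointing, area-nonincreasing vector field $Z$ and then follows the Marques--Neves/Zhou strategy (\cite[Theorem 2.1]{MaNe}, \cite[Theorem 2.7]{Zhou}) of pushing near-maximal slices a definite distance into $N$ before applying interior Almgren--Pitts min-max and regularity. You spell out the cutoff and homotopy verification in more detail, but the underlying mechanism and the cited machinery are the same.
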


\begin{proof}

In \cite{Zhou}, this theorem is proved for a smooth mean convex boundary $\partial N$ (see Theorem $2.7$ in \cite{Zhou}). Here, we can use the previous proposition to study the piecewise smooth case. Recall that the proof in \cite{Zhou} uses an idea of \cite{MaNe}, where the authors construct a vector field in $N$ only nonvanishing in a small neighborhood of $\partial N$ and, using the associated flow, find $a>0$ and a minimizing sequence of sweepouts $\{\{\Sigma_t^k\}_{t\in [0,1]}\}_k$ such that
$$\mathcal{H}^n(\Sigma^k_t) \geq W(N,\partial N,\Lambda) - \delta_1 \Rightarrow d(\Sigma^k_t,\partial N) \geq \delta/2,$$
where $\delta_1 = \frac{1}{4}(W(N,\partial N, \Lambda) - \mathcal{H}^n(\partial N)) >0$ and $d$ is the distance function in $M$. Consider the area-decreasing vector field $\mathbf{V}$ constructed in Proposition \ref{vector field}, then the flow $\Phi_t$ associated to $\mathbf{V}$ sends $N$ into $N$ for all times and $\Phi_t(N) \subset \{p \in N; d(p,\partial N)\geq \delta/2 \}$ for sufficiently large times. These properties of $\mathbf{V}$ allow us to conclude as in \cite[Theorem 2.1]{MaNe} and \cite[Theorem 2.7]{Zhou}.

\end{proof}

We will say that a hypersurface $\Sigma$ is produced by Amgren-Pitts' theory with the fundamental class of $H_{n+1}(M,\mathbb{Z}_2)\simeq\mathbb{Z}_2$ if the varifold $|\Sigma|$ belongs to the critical set $\mathbf{C}(S)$ of a sequence $S\in \Pi$, where $\Pi\in \pi^\sharp_1(\mathcal{Z}_n(M,\mathbf{M},\mathbb{Z}_2),\{0\})$ corresponds to the fundamental class of $H_{n+1}(M,\mathbb{Z}_2)$ by the isomorphism (\ref{iso}) and Remark \ref{isohomo}. 
The following proposition will be useful. 

\begin{prop} \label{sweepout}
Let $\Sigma$ be an embedded connected unstable minimal hypersurface in $M$ and suppose that there is no embedded stable minimal hypersurface $S$ with $\mathcal{H}^n(S) < \mathcal{H}^n(\Sigma)$. Then there is a minimal hypersurface $\Gamma$ produced by Almgren-Pitts' theory with the fundamental class of $H_{n+1}(M,\mathbb{Z}_2)$ such that 
$$\mathcal{H}^n(\Gamma) \leq \mathcal{H}^n(\Sigma)$$
and if equality holds, then $\Sigma$ itself is produced by Almgren-Pitts' theory with the fundamental class of $H_{n+1}(M,\mathbb{Z}_2)$.
\end{prop}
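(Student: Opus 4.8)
The plan is to exhibit the fundamental class $\Pi$ of $H_{n+1}(M,\mathbb{Z}_2)\simeq\mathbb{Z}_2$ (under the isomorphism \ref{iso} and Remark \ref{isohomo}) by a sweepout of $M$ that passes through $\Sigma$, and then feed this into the Almgren--Pitts machinery. By the min-max theorem of Almgren--Pitts together with the Schoen--Simon regularity theory in the form of Remark \ref{modulo}, the class $\Pi\in\pi_1^{\sharp}(\mathcal{Z}_n(M,\mathbf{M},\mathbb{Z}_2),\{0\})$ corresponding to $[M]$ carries an embedded minimal hypersurface $\Gamma$ produced by Almgren--Pitts' theory with the fundamental class, with $\mathcal{H}^n(\Gamma)=\mathbf{L}(\Pi)$ (area counted with multiplicity). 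So it suffices to prove: (a) $\mathbf{L}(\Pi)\le\mathcal{H}^n(\Sigma)$; and (b) if equality holds, $|\Sigma|$ belongs to the critical set of a critical sequence in $\Pi$.

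For both points I would build a continuous sweepout $\{\Sigma_t\}_{t\in[-1,1]}$ of $M$ (in the sense of Definition \ref{definition}, $\partial N=\varnothing$) whose associated regions $\Omega_t$ increase from $\Omega_{-1}=\varnothing$ to $\Omega_1=M$ — so that the family represents $[M]$ — with $\Sigma_0=\Sigma$ and $\mathcal{H}^n(\Sigma_t)<\mathcal{H}^n(\Sigma)$ for every $t\neq 0$, hence $\max_t\mathcal{H}^n(\Sigma_t)=\mathcal{H}^n(\Sigma)$, attained only at $t=0$. I carry this out when $\Sigma$ is two-sided and separates $M=\Omega^-\cup\Omega^+$, which is the case needed in the sequel. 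First, since $\Sigma$ is unstable, the Jacobi operator $L_\Sigma=\Delta_\Sigma+|A_\Sigma|^2+\Ric(\nu,\nu)$ has first eigenvalue $\lambda_1<0$ with positive eigenfunction $\phi$; deforming $\Sigma$ normally with profile $\simeq s\phi$ gives, by the second variation formula, a family $\Sigma_s$ with $\mathcal{H}^n(\Sigma_s)=\mathcal{H}^n(\Sigma)-\tfrac12|\lambda_1|s^2\int_\Sigma\phi^2+o(s^2)<\mathcal{H}^n(\Sigma)$ for $0<|s|\le\epsilon$, and for $s\in[0,\epsilon]$ the surface $\Sigma_s$ bounds a region $\Omega_s^+\subset\Omega^+$ (all of $\Omega^+$ minus a thin collar) whose boundary is mean convex with respect to $\Omega_s^+$, because continued motion into $\Omega^+$ keeps decreasing area when $\epsilon$ is small. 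Next I would sweep out each mean-convex half, say $\Omega_\epsilon^+$, from $\partial\Omega_\epsilon^+=\Sigma_\epsilon$ down to the empty set by a generalized smooth family all of whose slices have area $\le\mathcal{H}^n(\Sigma_\epsilon)<\mathcal{H}^n(\Sigma)$, by running mean curvature flow from $\Sigma_\epsilon$: it is monotone (the enclosed region shrinks) and strictly area-decreasing (the boundary is mean convex, not minimal), and it cannot converge to a closed minimal hypersurface, since such a limit would be, by Schoen--Simon ($n\le 6$) and the stability of one-sided limits of mean-convex flows, an embedded stable minimal hypersurface (or a component of one) of area $<\mathcal{H}^n(\Sigma)$, contradicting the hypothesis; hence it becomes extinct in finite time. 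Concatenating the two half-sweepouts with $\{\Sigma_s\}_{s\in[-\epsilon,\epsilon]}$, suitably reparametrized, yields the desired $\{\Sigma_t\}$: the $\Omega_t$ fill $\Omega^-$ for $t\le 0$, equal $\Omega^-$ at $t=0$, then fill the rest of $\Omega^+$, reaching $M$ at $t=1$.

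With $\{\Sigma_t\}$ in hand, the interpolation theorem (Remark \ref{modulo}) produces a discrete $(1,\mathbf{M})$-homotopy sequence $S=\{\phi_i\}\in\Pi$ with $\mathbf{L}(S)\le\max_t\mathcal{H}^n(\Sigma_t)=\mathcal{H}^n(\Sigma)$, so $\mathbf{L}(\Pi)\le\mathcal{H}^n(\Sigma)$ and $\mathcal{H}^n(\Gamma)=\mathbf{L}(\Pi)\le\mathcal{H}^n(\Sigma)$; this is (a). If $\mathbf{L}(\Pi)=\mathcal{H}^n(\Sigma)$ then $S$ is critical, $\mathbf{C}(S)$ is nonempty, and each $V\in\mathbf{C}(S)$ has $\|V\|(M)=\mathcal{H}^n(\Sigma)$. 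Because the interpolation keeps every $\phi_i(x)$ $\mathcal{F}$-close — and $\mathbf{M}$-close up to $o(1)$ — to some slice $\Sigma_t$, and $\mathcal{H}^n(\Sigma_t)<\mathcal{H}^n(\Sigma)$ unless $t=0$ with $\Sigma_0=\Sigma$, any $\phi_i(x)$ of mass near $\mathcal{H}^n(\Sigma)$ is $\mathbf{F}$-close to $|\Sigma|$; hence $\mathbf{C}(S)=\{|\Sigma|\}$, so $\Sigma$ is itself produced by Almgren--Pitts' theory with the fundamental class (and one may take $\Gamma=\Sigma$).

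I expect the main obstacle to be the second half of the construction: turning ``sweep out a mean-convex domain containing no small stable minimal hypersurface down to $\varnothing$ while keeping the area below that of the boundary'' into a bona fide generalized smooth family as in Definition \ref{definition}. Mean curvature flow is singular, and for $n\ge 2$ the singular time-slices need not be finite point sets as that definition requires, so one must regularize the flow slices or replace the flow by another area-nonincreasing sweepout construction adapted to mean-convex regions (cf.\ \cite{Zhou}); the crucial ``no getting stuck'' input is precisely the absence of embedded stable minimal hypersurfaces of smaller area. Lesser points are the exact locality of the interpolation needed for (b), and the cases where $\Sigma$ is non-separating or one-sided, which require separate treatment that I have not addressed here.
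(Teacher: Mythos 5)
Your overall structure matches the paper's — build a continuous sweepout of $M$ with $\Sigma$ as the unique maximal slice, interpolate to a discrete homotopy sequence, identify the homotopy class with the fundamental class, and then run the critical-sequence / flat-convergence argument in the equality case — and your equality-case discussion is essentially the paper's. But two genuine gaps need closing. First, you carry out the construction only when $\Sigma$ is two-sided and separating and explicitly defer the remaining cases. That is not a side case one can defer: the proposition is applied to embedded connected unstable minimal hypersurfaces with no a priori topological information, so the proof must rule it out. The paper does so at the outset: if $\Sigma$ does not separate, it represents a non-trivial class in $H_n(M,\mathbb{Z}_2)$; minimizing area in that class yields, by the regularity theory for mass-minimizing flat chains mod $2$, an embedded stable minimal hypersurface $S$ with $\mathcal{H}^n(S)\le\mathcal{H}^n(\Sigma)$, and instability of $\Sigma$ forces strict inequality, contradicting the hypothesis. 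This also shows $\Sigma$ is two-sided. Without this step the proposition is not proved as stated.

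Second, your mechanism for sweeping out the mean-convex half $\Omega_\epsilon^+$ by mean curvature flow does not produce a generalized smooth family in the sense of Definition~\ref{definition}: as you yourself note, for $n\ge 2$ singular slices of the flow need not have a finite singular set $P_t$, so the interpolation theorem cannot be applied to the resulting family; moreover the ``cannot get stuck'' claim leans on regularity and one-sided stability assertions for mean-convex flow that would themselves need proof. The paper instead cites Zhou's Proposition~3.6 for the sweepout, and you can see the rigorous version of your ``no getting stuck'' idea realized in the same framework in Lemma~\ref{attention}: rather than flowing the boundary, one observes that if every admissible sweepout of the mean-convex region had a slice of area at least that of the boundary, the width would strictly exceed the boundary area, Theorem~\ref{piecew} would produce an interior embedded minimal hypersurface, and area minimization in a resulting relative homology class would yield an embedded stable minimal hypersurface of area strictly below $\mathcal{H}^n(\Sigma)$, again contradicting the hypothesis. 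This width argument uses exactly the hypothesis you correctly identified as the engine of the proof, but stays entirely within the min-max framework; the MCF route would require substantial repair or replacement.
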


\begin{proof}
By intersection theory, if $\Sigma$ does not separate, we can minimize the area in its non-trivial homology class in $H_n(M,\mathbb{Z}_2)$ and by the regularity theory for area minimizing flat chains modulo $2$ (\cite[Theorem 2.4]{Morgan}) we obtain a stable non-trivial minimal hypersurface $S$ with $\mathcal{H}^n(S) < \mathcal{H}^n(\Sigma)$. So actually $\Sigma$ is two-sided. The procedure to produce $\Gamma$ is then standard (\cite{MaNe},\cite{Zhou}). One constructs a continuous sweepout $\{\Sigma_t \}_{[0,1]}$ of $M$ as in \cite[Proposition 3.6]{Zhou} (whose proof does not use the orientability of $M$ but only the fact that the hypersurface $\Sigma$ is two-sided): 
\begin{enumerate} 
\item $\Sigma_{1/2}=\Sigma$,
\item $\mathcal{H}^n(\Sigma_t) \leq \mathcal{H}^n(\Sigma)$ with equality only if $t =1/2$,
\item $\{\Sigma_t\}_{t\in [1/2-\epsilon,1/2+\epsilon]}$ forms a foliation of a neighborhood of $\Sigma$.
\end{enumerate}
$\{\Sigma_t \}$ determines by interpolation (\cite[Theorem 5.5]{Zhou} and Remark \ref{modulo}) a homotopy sequence of mappings $S= \{\phi_i\}_{i\in \mathbb{N}}$ where
$$\phi_i : I(1,n_i)_0 \to \mathcal{Z}_n(M,\mathbf{M},\mathbb{Z}_2) \text{ with fineness $\delta_i$, } \quad \lim\limits_{i\to \infty} \delta_i= 0,  \lim\limits_{i\to \infty} n_i = \infty,$$
$$\text{ and } \mathbf{L}(S)\leq \mathcal{H}^n(\Sigma).$$ 
Let $\Pi$ be the homotopy class of mappings of $S$. By \cite[Theorem 5.8, Claim 3]{Zhou} and Remark \ref{modulo}, $\Pi$ corresponds to the fundamental class of $H_{n+1}(M,\mathbb{Z}_2)$. By \cite[Theorem 4.10, Theorem 7.12]{P} and Remark 6, there is a smooth minimal hypersurface $\Gamma$ with $$\mathcal{H}^n(\Gamma)=\mathbf{L}(\Pi) \leq \mathbf{L}(S) \leq \mathcal{H}^n(\Sigma).$$
Suppose that $\mathbf{L}(\Pi) = \mathcal{H}^n(\Sigma)$, which means that $S$ is a critical sequence. We can consider a sequence of slices $\phi_i(\alpha^i)$, where $  \alpha^i=[x_i] \in I(1,n_i)_0$, such that $$\mathbf{M}(\phi_i(\alpha^i)) \to \mathbf{L}(\Pi).$$ 
Because of \cite[Theorem 5.5 (1)]{Zhou} and Remark \ref{modulo}, necessarily $x_i\to 1/2$ so $\phi_i(\alpha^i)$ converge to $\Sigma_{1/2}=\Sigma$ in the flat topology by \cite[Theorem 5.5 (3)]{Zhou}. It is known that if $T_j \in \mathcal{Z}_k(M,\mathbb{Z}_2)$ converges to $T \in \mathcal{Z}_k(M,\mathbb{Z}_2)$ in the flat topology and the sequence of varifolds $|T_j|$ converge to $V \in \mathcal{V}_k(M)$, then $||V||(M) = \mathbf{M}(T)$ implies that $V=|T|$ (see \cite[Chapter 2, 2.1, (18), (f)]{P}). It follows that if $$\lim_i |\phi_i(\alpha^i)| = V \text{ and } ||V||(M) = \mathbf{L}(\Pi)= \mathcal{H}^n(\Sigma)$$ then $V=|\Sigma|$. Thus the only element of the critical set $\mathbf{C}(S)$ is $|\Sigma|$. This shows that $\Sigma$ itself is produced by Almgren-Pitts' theory with the fundamental class of $H_{n+1}(M,\mathbb{Z}_2)$.

\end{proof}

\subsection{} \textbf{Outline of the proof of Theorem \ref{principal}}
   
$M$ is a closed $(n+1)$-manifold $M$ with $2\leq n \leq 6$. The main step of the proof is to show that if a least area hypersurface exists, then it is necessarily embedded (Proposition \ref{embed}). This step is essentially an extension of the idea in \cite{CC} to higher dimensions. Given a minimal hypersurface $\Sigma$ which is not embedded and with no stable embedded minimal hypersurface of less area, we construct an "optimal" non-trivial sweepout corresponding to $\Sigma$ in the sense that $\Sigma$ is the middle slice of a mapping $A :I \to \mathcal{Z}_n(M,\mathbb{Z}_2)$ continuous in the flat topology and other slices have area strictly less than $\mathcal{H}^n(\Sigma)$. This way of thinking $\Sigma$ as the middle slice of an optimal sweepout in order to compare it with other sweepouts and deduce properties about the embeddedness, the index, the area or the multiplicity of $\Sigma$ appears in \cite{CC}, \cite{MaNe}, \cite{Zhou} and \cite{MR}. We will first show that there is a partition of the family of connected components of $M\backslash \Sigma$ into two classes $\mathcal{C}_1$ and $\mathcal{C}_2$, such that 
$$\Sigma =  \bigcup\limits_{c \in \mathcal{C}_i}\partial c \text{ for } i\in\{1,2\}.$$
Roughly speaking, $A$ is then obtained by retracting $\Sigma$ to $\varnothing$ in two different ways corresponding to $\mathcal{C}_1$ and $\mathcal{C}_2$. The construction of this $1$-parameter family of currents is based on Theorem \ref{piecew}. Then we use the interpolation proved by Marques and Neves in \cite{MaNeWillmore} for $n=2$, later checked for higher dimensions by Zhou \cite{Zhou}. It enables to obtain, from a family of currents continous in the flat topology, a homotopy sequence of mappings, to which one can apply Pitts' theory with some precise control on the obtained discrete slices. This interpolation theorem applied to $A$ will give the wanted $S$. The latter belongs to a homotopy class of mappings into $(\mathcal{Z}_n(M,\mathbf{M},\mathbb{Z}_2),\{0\})$ called $\Pi$ such that 
$$0<\mathbf{L}(\Pi) < \mathcal{H}^2(\Sigma).$$ 
The theory of Almgren and Pitts then produces an embedded minimal hypersurface whose area is strictly less than $\mathcal{H}^n(\Sigma)$, which completes the main step. In these arguments, we use flat chains modulo $2$ (see Remark \ref{modulo}) because we work with non necessarily orientable submanifolds. 

To illustrate what are the two ways of retracting $\Sigma$, let's consider the following $1$-parameter family of currents $\varphi : [-1,1] \to \mathcal{Z}_n(\mathbb{R}^{n+1},\mathbb{Z}_2)$:
$$ \varphi(t) = \left\{ \begin{array}{rcl}
 \partial\{x_1\leq t,x_2\leq t\} \cup \partial \{ x_1\geq-t,x_2\geq -t\}   & \mbox{for}
& t<0\\ \partial \{x_1\geq t,x_2\leq -t\} \cup \partial \{x_1 \leq -t, x_2\geq t\} & \mbox{for} & t \geq 0
\end{array}\right.$$
where $x_1,x_2,...,x_{n+1}$ are the coordinate functions of $\mathbb{R}^{n+1}$. Let's describe what is happening in the plane generated by $x_1$ and $x_2$. During the first half $[-1,0[$, the boundaries of the South-West and North-East corners get closer until they meet along the "edge" $\{x=0,y=0\}$ and during the second half $[0,1]$, the boundaries of the North-West and South-East corners move away one from another. This family of currents will be the local model for the construction of the deformation described previously. 
 
When one tries to extend the proof in \cite{CC} to $n\geq 2$, several technical issues arise in higher dimension. Firstly, Calabi and Cao only had to deal with curves which form an eight, whereas here $\Sigma$ is not a priori so nicely immersed and we have to understand why there is still a good partition of the components of $M\backslash \Sigma$ into two classes (see Lemma \ref{partition}). Secondly in order to construct the optimal sweepout, we have to use the technics in \cite{MaNe} and \cite{Zhou}, but they are only developed for the smooth embedded case and here, we have to work with components of $M\backslash \Sigma$ whose boundaries are only rectifiable (see Theorem \ref{piecew} and Proposition \ref{claim}).

\section{Proof of Theorem \ref{principal}} \label{proof}
All minimal hypersurfaces considered here are closed. Define the following quantities:
$$\mathfrak{A}(M)= \inf \{\mathcal{H}^n(\Sigma') ; \Sigma' \subset M \text{ is a minimal hypersurface }\},$$
$$\mathfrak{A}_S(M) = \inf\{\mathcal{H}^n(\Sigma') ; \Sigma' \subset M \text{ is minimal, stable and embedded }\}.$$
By definition, $\mathfrak{A}_S(M)\geq \mathfrak{A}(M)$. In contrast to \cite{Zhou} and \cite{MR}, we will not take care of the orientability, thus here "of least area" is to be understood in the geometric sense. We will prove the following more precise version of Theorem \ref{principal}:

\begin{theo} \label{principal bis}
Let $M$ be a closed $(n+1)$-manifold where $2\leq n\leq 6$. Then there exists a least area minimal hypersurface $\Sigma_0$, i.e. $$\mathcal{H}^n(\Sigma_0)=\mathfrak{A}(M).$$
Such a hypersurface is either an embedded stable minimal hypersurface or a two-sided hypersurface of index one produced by the min-max theory of Almgren-Pitts with the fundamental class of $H_{n+1}(M,\mathbb{Z}_2)$. In particular it is always embedded.

\end{theo}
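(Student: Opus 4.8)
The plan is to identify $\mathfrak{A}(M)$ with $\min(\mathfrak{A}_S(M),\mathcal{A}_1(M))$, where $\mathcal{A}_1(M):=\mathbf{L}(\Pi_0)$ and $\Pi_0\in\pi_1^\sharp(\mathcal{Z}_n(M,\mathbf{M},\mathbb{Z}_2),\{0\})$ is the class corresponding under the isomorphism (\ref{iso}) and Remark \ref{isohomo} to the fundamental class of $H_{n+1}(M,\mathbb{Z}_2)\simeq\mathbb{Z}_2$, and then to show that this minimum is attained by a hypersurface of the asserted type. Two facts are immediate. By the monotonicity formula every minimal hypersurface in $M$ has area bounded below by some $c(M)>0$, so $\mathfrak{A}(M)>0$. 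And Almgren--Pitts theory applied to $\Pi_0$ produces, by \cite[Theorem 4.10]{P}, Remark \ref{modulo}(1), and \cite[Theorem 2.11]{MaNeinfinity}, an embedded minimal hypersurface $\Gamma_0$ with $\mathcal{H}^n(\Gamma_0)=\mathcal{A}_1(M)>0$ counted with multiplicity and of Morse index at most one (the one-parameter min-max index estimate); hence $\mathfrak{A}(M)\le\mathcal{A}_1(M)$, and trivially $\mathfrak{A}(M)\le\mathfrak{A}_S(M)$.

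The heart of the matter, the higher-dimensional replacement for the Birkhoff-shortening argument of \cite{CC}, is the claim $(\star)$: \emph{if $\Sigma$ is a minimal hypersurface which is not embedded and there is no embedded stable minimal hypersurface of area strictly less than $\mathcal{H}^n(\Sigma)$, then $\mathcal{A}_1(M)<\mathcal{H}^n(\Sigma)$.} To prove it I would realise $\Sigma$ as the middle slice of an \emph{optimal} sweepout of $M$: (a) first show (Lemma \ref{partition}) that the connected components of $M\setminus\Sigma$ can be $2$-coloured into classes $\mathcal{C}_1,\mathcal{C}_2$ with $\Sigma=\partial(\bigcup_{c\in\mathcal{C}_i}c)$ for $i=1,2$, the analogue of the obvious fact in \cite{CC} that a non-simple shortest geodesic is a figure-eight bounding on each side; (b) then contract $\bigcup_{c\in\mathcal{C}_1}c$ down to $\varnothing$ and expand $\bigcup_{c\in\mathcal{C}_2}c$ out from $\varnothing$, applying on each region the barrier vector field of Proposition \ref{vector field} and the min-max deformation of Theorem \ref{piecew} (the boundaries of these regions are only rectifiable, but after a perturbation modelled locally on the planar family $\varphi(t)$ of the outline they are piecewise smooth mean convex), to obtain a map $A:I\to\mathcal{Z}_n(M,\mathbb{Z}_2)$ continuous in the flat topology, with $A(0)=A(1)=0$, $A(1/2)$ the flat chain modulo $2$ determined by $\Sigma$, $\mathbf{M}(A(t))<\mathcal{H}^n(\Sigma)$ for $t\neq 1/2$, and $[A]$ the fundamental class in $\pi_1(\mathcal{Z}_n(M,\mathcal{F},\mathbb{Z}_2),\{0\})$ (this is Proposition \ref{claim}, together with a computation with the isomorphism $F$ of (\ref{iso})); (c) finally feed $A$ into the interpolation theorem of \cite{MaNeWillmore},\cite{Zhou} (Remark \ref{modulo}(2)) to get a homotopy sequence of mappings lying in $\Pi_0$ with $\mathbf{L}(\Pi_0)\le\sup_t\mathbf{M}(A(t))<\mathcal{H}^n(\Sigma)$. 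Granting $(\star)$, one reads off $\mathfrak{A}(M)\ge\min(\mathfrak{A}_S(M),\mathcal{A}_1(M))$ by testing an arbitrary minimal hypersurface $\Sigma$: if it is embedded stable then $\mathcal{H}^n(\Sigma)\ge\mathfrak{A}_S(M)$; if embedded unstable, then either an embedded stable minimal hypersurface has smaller area, or Proposition \ref{sweepout} gives $\mathcal{A}_1(M)\le\mathcal{H}^n(\Sigma)$; if not embedded, then either an embedded stable one is smaller, or $(\star)$ gives $\mathcal{A}_1(M)<\mathcal{H}^n(\Sigma)$. Hence $\mathfrak{A}(M)=\min(\mathfrak{A}_S(M),\mathcal{A}_1(M))$.

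For attainment and the dichotomy I would split on which quantity is smaller. If $\mathcal{A}_1(M)\le\mathfrak{A}_S(M)$, then $\mathfrak{A}(M)=\mathcal{H}^n(\Gamma_0)$; writing the varifold $\Gamma_0$ as $\sum_j m_j|S_j|$ with each $S_j$ a connected embedded minimal hypersurface, each satisfies $\mathcal{H}^n(S_j)\ge\mathfrak{A}(M)$, while $\sum_j m_j\mathcal{H}^n(S_j)=\mathfrak{A}(M)>0$, so there is a single summand with $m_j=1$; thus $\Sigma_0:=S_j$ is connected, embedded, of least area, and either stable, the first alternative, or unstable, in which case it has index one and is two-sided, since if $[\Sigma_0]\neq 0$ in $H_n(M,\mathbb{Z}_2)$ then minimising area in that class, as in the proof of Proposition \ref{sweepout}, would produce a stable minimal hypersurface of area $<\mathfrak{A}(M)$, which is impossible; and by the equality case of Proposition \ref{sweepout} (applicable because no embedded stable hypersurface has area $<\mathfrak{A}(M)$) $\Sigma_0$ is produced by Almgren--Pitts with the fundamental class, the second alternative. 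If instead $\mathfrak{A}_S(M)<\mathcal{A}_1(M)$, then $\mathfrak{A}(M)=\mathfrak{A}_S(M)<\infty$; a minimising sequence of embedded stable minimal hypersurfaces has uniformly bounded area and, by the curvature estimates of Schoen--Simon \cite{SchoenSimon} valid for $2\le n\le 6$, uniformly bounded second fundamental form, hence a subsequence converges smoothly with finite multiplicity to an embedded stable minimal hypersurface, and the same multiplicity-and-connectedness argument shows the limit is a single connected hypersurface of area $\mathfrak{A}(M)$, which is $\Sigma_0$, of the first type. Finally, every least area minimal hypersurface $\Sigma$ is embedded: no embedded stable minimal hypersurface has area $<\mathfrak{A}(M)=\mathcal{H}^n(\Sigma)$, so if $\Sigma$ were not embedded then $(\star)$ would give $\mathcal{A}_1(M)<\mathcal{H}^n(\Sigma)=\mathfrak{A}(M)\le\mathcal{A}_1(M)$, a contradiction; the dichotomy for such a $\Sigma$ follows exactly as for $\Sigma_0$ above (using Proposition \ref{sweepout}, and its equality clause, when $\Sigma$ is unstable).

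The main obstacle is steps (a)--(b) of $(\star)$, i.e. Proposition \ref{embed}: one must establish the $2$-colouring Lemma \ref{partition}, which is trivial for a figure-eight but genuinely requires proof for a general immersed $\Sigma$ with complicated self-intersection set, and then carry out the two-sided retraction when the regions $\bigcup_{c\in\mathcal{C}_i}c$ have merely rectifiable boundary, which forces one to upgrade the smooth mean-convex min-max of \cite{MaNe},\cite{Zhou} to the piecewise smooth setting (Proposition \ref{vector field}, Theorem \ref{piecew}) and to control the sweepout near the singular strata of $\Sigma$ using the planar model $\varphi(t)$. Everything else — the compactness in the stable case, the multiplicity-one arguments, the homology/two-sidedness dichotomy, and the bookkeeping with the isomorphism (\ref{iso}) — is comparatively routine.
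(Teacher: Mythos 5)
Your proposal is correct and follows essentially the same strategy as the paper's: identify $\mathfrak{A}(M)$ with $\min(\mathfrak{A}_S(M),\mathbf{L}(\Pi))$ using Propositions \ref{sweepout} and \ref{embed}, establish that $\mathfrak{A}_S(M)$ is achieved by Schoen--Simon compactness and that $\mathbf{L}(\Pi)$ is achieved by Almgren--Pitts, and then extract a connected multiplicity-one least-area hypersurface and identify it as stable or as a min-max output via the equality clause of Proposition \ref{sweepout}. The only small divergence is the index-one claim, where you invoke a general one-parameter min-max index estimate for $\Gamma_0$, whereas the paper obtains index one from the foliation structure of the optimal sweepout following \cite[Proposition 3.1]{MaNe}; both routes are standard and lead to the same conclusion.
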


Let $M$ be a closed $(n+1)$-manifold where $2\leq n\leq 6$. Let $\Sigma = \psi(\Gamma)$ be a minimal hypersurface, image of a closed $n$-manifold $\Gamma$ by the immersion $\psi$. The latter will be supposed to be without "double cover", i.e. there is not a pair $(U_1,U_2)$ of disjoint open sets in $\Gamma$ such that $\psi(U_1)=\psi(U_2)$; since $\Sigma$ is a minimal hypersurface, it is always possible to choose $\psi$ without double cover. Denote by $\mathcal{C}_\Sigma$ the set of connected components of $M \backslash \Sigma$. The complement in $M$ of an immersed hypersurface may be quite complicated in general but it is well described if the immersion is a map with normal crossings (see \cite[Definition 3.1]{GG}), that is, a "generic" immersion. 

Define the closed set
\begin{equation} \label{self-intersection}
E=\{x \in \Sigma;  \psi^{-1}(\{x\}) \text{ has at least two distinct elements}\}
\end{equation}
to be the set along which $\Sigma$ self-intersects. Two minimal hypersurfaces tangentially intersecting at a point can be locally written as graphs of two functions whose difference satisfies a homogeneous elliptic equation. The explicit equation is derived in \cite[Chapter 7, \S 1]{Acourseinminimalsurfaces} for instance (the authors do it for $n+1=3$ but an analogue equation is clearly true for higher dimensions). By the description of nodal sets for elliptic equations in \cite{Aron} or \cite[Theorem 1.10]{HS}, it is known that $E$ is an $(n-1)$-rectifiable set: more precisely, the set where $\Sigma$ intersects itself tangentially is an $(n-2)$-rectifiable set.

\begin{lemme} \label{partition}
Let $\Sigma$ be a non-embedded minimal hypersurface in $M$ such that $\mathcal{H}^n(\Sigma) \leq \mathfrak{A}_S(M) $. Then there is a partition of $ {\mathcal{C}}_\Sigma$ into $2$ classes ${\mathcal{C}}_1$ and ${\mathcal{C}}_2$ such that if ${c}_1$, ${c}_2$ are in $ {\mathcal{C}}_\Sigma$, if for $p \in \Sigma\backslash E$ and $r>0$:
$$(B(p,{r})\backslash \Sigma) \subset ({c}_1\cup {c}_2),$$
then $c_1$ and $c_2$ are not in the same class.
In particular, for $i\in \{1,2\}$, $$\mathcal{H}^n(\Sigma)  =  \sum\limits_{c \in {\mathcal{C}_i}}\mathcal{H}^n(\partial {c}).$$
Moreover, such a partition is uniquely determined.

\end{lemme}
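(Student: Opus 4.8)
The statement is that a natural graph on the components of $M\setminus\Sigma$ is bipartite, together with uniqueness of the colouring and a measure-theoretic accounting, so I would organize the proof around that graph. Let $\mathcal{G}$ be the graph with vertex set $\mathcal{C}_\Sigma$ in which two components $c_1,c_2$ are joined by an edge whenever there exist $p\in\Sigma\setminus E$ and $r>0$ with $B(p,r)\setminus\Sigma\subset c_1\cup c_2$; since $\Sigma$ is a smooth embedded hypersurface near such a $p$, the set $B(p,r)\setminus\Sigma$ has exactly two components, lying in $c_1$ and $c_2$ respectively. A partition of $\mathcal{C}_\Sigma$ as in the lemma is precisely a proper $2$-colouring of $\mathcal{G}$. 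Two preliminary observations: $E$ is $(n-1)$-rectifiable, hence closed, of codimension at least $2$ in $M$, and with empty interior in $\Sigma$ (the tangential part of $E$ being even smaller); so a generic path in $M$ meets $\Sigma$ transversally and only at points of $\Sigma\setminus E$. Connecting points of distinct components by such paths shows that $\mathcal{G}$ is connected, and $\Sigma\setminus E\neq\varnothing$ gives at least one edge.

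The crux is that $\mathcal{G}$ contains no odd cycle. Assume it did. For each edge of the cycle pick a short transverse arc through the corresponding point of $\Sigma\setminus E$, and join the endpoints of consecutive arcs by paths staying inside the (connected) components that they visit; this produces a loop $\ell\subset M$ meeting $\Sigma$ transversally, only at points of $\Sigma\setminus E$, an odd number of times. Let $T_\Sigma\in\mathcal{Z}_n(M,\mathbb{Z}_2)$ be the flat cycle modulo $2$ carried by $\Sigma$; since $\psi$ is injective on the preimage of $\Sigma\setminus E$ and $\mathcal{H}^n(E)=0$, $T_\Sigma$ has multiplicity one almost everywhere on $\Sigma$ and $\mathbf{M}(T_\Sigma)=\mathcal{H}^n(\Sigma)$. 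By modulo $2$ intersection theory, each transverse crossing of $\ell$ with $\Sigma$ at a multiplicity-one point contributes $1$, so the $\mathbb{Z}_2$ intersection number of $[\ell]$ and $[T_\Sigma]$ is $1$, whence $[T_\Sigma]\neq 0$ in $H_n(M,\mathbb{Z}_2)$. Minimizing mass in this homology class among flat chains modulo $2$ and applying the regularity theory in dimensions $n\leq 6$ (\cite[Theorem 2.4]{Morgan}) yields a smooth embedded stable minimal hypersurface $S$ with
\[
\mathfrak{A}_S(M)\leq\mathcal{H}^n(S)\leq\mathbf{M}(T_\Sigma)=\mathcal{H}^n(\Sigma)\leq\mathfrak{A}_S(M),
\]
so all of these are equalities. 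In particular $T_\Sigma$ is itself mass-minimizing in its class, and \cite[Theorem 2.4]{Morgan} applied to $T_\Sigma$, whose support is $\Sigma$, shows that $\Sigma$ is smooth and embedded, contradicting the hypothesis. Hence $\mathcal{G}$ is bipartite.

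A connected bipartite graph with at least one edge has a unique proper $2$-colouring up to interchanging the two colours, which gives both the existence and the uniqueness of the partition $\{\mathcal{C}_1,\mathcal{C}_2\}$. For the area identity, fix $i$: given $p\in\Sigma\setminus E$, the only components of $\mathcal{C}_\Sigma$ whose closure contains $p$ are the two local sides of $\Sigma$ at $p$, which by bipartiteness are distinct and lie in different classes, so exactly one of them belongs to $\mathcal{C}_i$. Consequently the sets $\partial c\cap(\Sigma\setminus E)$ for $c\in\mathcal{C}_i$ are pairwise disjoint and cover $\Sigma\setminus E$; since $\partial c\subset\Sigma$, each $\mathcal{H}^n(\partial c)$ is finite, and $\mathcal{H}^n(E)=0$, summing gives $\sum_{c\in\mathcal{C}_i}\mathcal{H}^n(\partial c)=\mathcal{H}^n(\Sigma\setminus E)=\mathcal{H}^n(\Sigma)$.

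The delicate point is the middle paragraph: arranging the required transversality while avoiding the self-intersection set $E$ (which is exactly where its rectifiability is needed), correctly identifying $\mathbf{M}(T_\Sigma)$ with $\mathcal{H}^n(\Sigma)$, and then invoking modulo $2$ regularity a second time — now for the cycle $T_\Sigma$ itself — in order to conclude that $\Sigma$ must be embedded.
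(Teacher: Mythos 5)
Your proposal is correct and follows essentially the same approach as the paper: both define the partition by parity of transverse crossings of $\Sigma$ away from $E$, obtain a homological obstruction (odd intersection number forces $[T_\Sigma]\neq 0$ in $H_n(M,\mathbb{Z}_2)$) if a consistent two-coloring were impossible, and apply the modulo-$2$ regularity theory of Morgan twice — once to get an embedded stable minimizer of area at most $\mathcal{H}^n(\Sigma)$, and once, after the forced equality with $\mathfrak{A}_S(M)$, to conclude that $\Sigma$ itself would be embedded, contradicting the hypothesis. The only cosmetic difference is that the paper fixes a base point and defines the classes directly by crossing parity, whereas you recast the same argument as bipartiteness of an adjacency graph on $\mathcal{C}_\Sigma$, and you spell out the disjoint-covering argument for the area identity and the identification $\mathbf{M}(T_\Sigma)=\mathcal{H}^n(\Sigma)$, which the paper leaves implicit.
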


\begin{proof}
Let's suppose that there is no embedded stable minimal hypersurface with area strictly less than $\mathcal{H}^n(\Sigma)$. We want to construct a partition of $ {\mathcal{C}}_\Sigma$ satisfying the described property. Take a point $p$ in an open connected component of $M\backslash \Sigma$. For all $c\in {\mathcal{C}}_\Sigma$, choose a smooth curve $\gamma_c: [0,1]$ such that $\gamma_c(0) = p$, $\gamma_c(1)$ is in the interior of $c$ and $\gamma$ is generic in the sense that it only intersects $\Sigma$ at a finite number of points in $\Sigma\backslash E$ and it does so transversally (this choice is possible since $E$ is a closed set whose Hausdorff dimension is $n-1$). Define the class of $c$ as follows: if $\gamma_c$ intersects $\Sigma$ an odd number of times then declare $c$ to be in ${\mathcal{C}}_1$, otherwise declare it to be in ${\mathcal{C}}_2$. It remains to show that this method gives the right partition. It suffices to show that the result of this algorithm does not depend on the choice of the pathes $\gamma_c$. Let's argue by contradiction and suppose that there are two generic paths $\mu_1,\mu_2 : I \to M$ connecting $p$ to a point $q$ in the interior of a component $c \in {\mathcal{C}}_\Sigma$, but intersecting $\Sigma\backslash E$ a different number of times modulo $2$. Then we can glue $\mu_1$ and $\mu_2$ at $p$ and $q$ so that we obtain a cycle intersecting generically $\Sigma$ an odd number of time. It means by intersection theory that $\Sigma$ represents a non-trivial class in $H_n(M,\mathbb{Z}_2)$. One can minimize the area in this homology class and eventually obtain an embedded minimal hypersurface $\Sigma'$ with $\mathcal{H}^n(\Sigma') \leq \mathcal{H}^n(\Sigma)$ by \cite[Theorem 2.4]{Morgan}. But by the assumption at the beginning of the proof, equality holds. Hence, $\Sigma$ also minimizes the area in its homology class so should be embedded by \cite{Morgan}. This is absurd, consequently the procedure gives a good partition. Finally, it remains to check the uniqueness of such a partition (by renaming the classes if necessary). If $p_i\in M\backslash \Sigma$ and $c(p_i)$ is its connected component ($i=1,2$), then there is a path $\gamma$ linking $p_1$ to $p_2$ and only intersecting $\Sigma$ a finite number of times, transversally, and away from $E$. The uniqueness then comes from the fact that the class of $c(p_1)$ determines the class of each component encountered by $\gamma$, in particular $c(p_2)$.

\end{proof}

If $\psi :\Gamma\to M$ is a minimal immersion of an $n$-dimensional manifold into $(M,g)$ such that $\psi(\Gamma)$ is two-sided, the Jacobi operator is given by
$$L\phi = \Delta\phi  + |\mathbf{A}|^2\phi+ \Ric_g(\nu,\nu)\phi,$$
where $\phi \in C^\infty(\Gamma)$, $\mathbf{A}$ is the second fundamental form, and $\nu$ is a choice of outward unit normal of $\psi(\Gamma)$ defined on $\Gamma$. We will adopt the convention that $\lambda$ is an eigenvalue of $L$ if there exists a non-zero function $\phi$ such that $L\phi + \lambda\phi=0$. Moreover, if $f\in C^\infty(\Gamma)$, define the map
$$\tilde{\exp}_{\psi,f} : \Gamma \to M$$
$$\tilde{\exp}_{\psi,f}(x) = \exp_{{\psi}(x)}(f(x)\nu(x)). $$
When $||f||_\infty$ is small, $\tilde{\exp}_{{\psi},f} $ is an immersion. Besides there is a unique choice of unit normal on 
$$\tilde{\exp}_{{\psi},rf}(\Gamma)\qquad r\in[0,1]$$
which is continous with respect to $r\in[0,1]$ and coinciding with $\nu$ at $r=0$. Thus $\tilde{\exp}_{{\psi},f} ({\Gamma} )$ is endowed with a natural choice of outward unit normal coming from $\nu$ and still called $\nu$. It is known that at each point of $\Gamma$:
\begin{equation} \label{derivative} 
\frac{\partial}{\partial r}\bigg|_{r=0} \langle \vec{H}(\tilde{\exp}_{\psi,rf}(\Gamma)), \nu \rangle   = L(f),
\end{equation}
where $\nu$ denote the natural choice of outward unit normal on $\tilde{\exp}_{\psi,rf}(\Gamma)$ coming from $\nu$.

Let $\psi :\Gamma\to M$ be an immersion of a connected $n$-dimensional manifold $\Gamma$ into $(M,g)$. If $\psi(\Gamma)$ is two-sided, then let $\tilde{\Gamma}=\Gamma_1\cup \Gamma_2$ be two copies of $\Gamma$ and define $\nu:\tilde{\Gamma} \to TM$ to be a continuous choice of an outward unit normal such that $\nu$ restricted to $\Gamma_1$ gives the opposite choice of $\nu$ restricted to $\Gamma_2$. If $\psi(\Gamma)$ is one-sided, then let $\tilde{\Gamma}$ be a connected double cover of $\Gamma$ such that there exists a continuous choice of outward unit normal  $\nu:\tilde{\Gamma} \to TM$. Denote by $\pi : \tilde{\Gamma} \to \Gamma$ the canonical projection. A function $\phi$ defined on $M$ (or $\Gamma$, or $\otimes^p T\Gamma$) lifts to a function on $\tilde{\Gamma}$ (or $\tilde{\Gamma}$, or $\otimes^pT\tilde{\Gamma}$) still denoted by $\phi$. Consider the immersion $\tilde{\psi} = \psi \circ \pi$. 

Usually if one considers a smooth two-sided unstable minimal hypersurface, one can push it using the first eigenvalue of the Jacobi operator to get a mean convex boundary which acts as a barrier for the Plateau problem for instance. Now if $\Sigma$ is a non-embedded minimal surface, and $c\in \mathcal{C}_\Sigma$, then constructing such a barrier "approximating " $\partial c$ inside $c$ is still possible. To make this statement rigorous, we begin with the following lemma which is a trick showing the existence of a hypersurface being "close" to $\Sigma$ and mean convex except perhaps in a small ball.

\begin{lemme} \label{justeacote}
Consider $\Gamma$ a compact connected $n$-dimensional manifold with a possibly non-empty smooth boundary and let $\psi :\Gamma\to M$ be a minimal immersion into $(M,g)$. Let $p\in\psi(\Gamma)$ be a point such that $\psi(\Gamma)$ is an embedded hypersurface in a neighborhood $U$ of $p$. Then, using the notations previously defined, there is a metric $h$ equal to $g$ in $M\backslash U$ and a function $$f \in C^\infty(\tilde{\Gamma} \cup\partial \tilde{\Gamma})$$ positive on $\tilde{\Gamma}$ and vanishing on $\partial\tilde{\Gamma}$  such that for all $s\in(0,1]$, $\tilde{\exp}_{\tilde{\psi},sf}$ is an immersion and the mean curvature of $\tilde{\exp}_{\tilde{\psi},sf}(\tilde{\Gamma})$, endowed with the natural choice of outward unit normal, is negative with respect to $h$.

\end{lemme}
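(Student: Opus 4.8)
The plan is to push the lifted immersion $\tilde{\psi}=\psi\circ\pi:\tilde{\Gamma}\to M$ off itself by a distance $sf$ along the normal $\nu$, choosing the metric $h$ (equal to $g$ outside $U$) so that $\psi$ stays $h$-minimal. Then formula (\ref{derivative}) applies verbatim with the $h$-Jacobi operator $L_h\phi=\Delta_h\phi+(|\mathbf{A}_h|^2+\Ric_h(\nu,\nu))\phi$, and since $\vec{H}=-H\nu$,
$$H_h\big(\tilde{\exp}_{\tilde{\psi},sf}(\tilde{\Gamma})\big)=-sL_h f+O(s^2)\quad\text{on }\tilde{\Gamma}.$$
Two reductions clear the ground. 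First, it suffices to produce $h$ and $f$ making the right-hand side negative for all small $s>0$: replacing $f$ by $s_0f$ turns $\tilde{\exp}_{\tilde{\psi},sf}$ into $\tilde{\exp}_{\tilde{\psi},(ss_0)f}$, so $s\in(0,1]$ corresponds to $ss_0\in(0,s_0]$, and $\tilde{\exp}_{\tilde{\psi},sf}$ is automatically an immersion for small $s$ since it converges to $\tilde{\psi}$ in $C^1$. Second, negativity of $H_h$ for small $s$ asks for $f>0$ on $\tilde{\Gamma}$, $f=0$ on $\partial\tilde{\Gamma}$, with $L_h f>0$; such an $f$ exists exactly when the first Dirichlet eigenvalue of the $h$-Jacobi operator on $\tilde{\Gamma}$ is negative, and one then takes $f$ (essentially) a first eigenfunction, so that $L_h f=|\lambda_1|f>0$. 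The point, somewhat counterintuitive, is therefore that the metric modification inside $U$ is used not to stabilize $\psi|_{\Gamma}$ but to make the Dirichlet problem on $\tilde{\Gamma}$ \emph{unstable}.

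To construct $h$, take Fermi coordinates $(x,t)$ for the embedded piece $\psi(\Gamma)\cap U$ relative to $g$, so $g=dt^2+g_{ij}(x,t)\,dx^idx^j$ with $\psi(\Gamma)\cap U=\{t=0\}$. Pick a bump $\zeta\geq0$ on $\psi(\Gamma)\cap U$ with $\zeta\equiv1$ near $p$, and a cutoff $\eta$ with $\eta(0)=1$ supported in $\{|t|<(2M)^{-1/2}\}$, and set $h=dt^2+\big(g_{ij}(x,t)-Mt^2\zeta(x)\eta(t)\,g_{ij}(x,0)\big)dx^idx^j$ in this chart and $h=g$ elsewhere; for $M$ large this is a metric equal to $g$ off $U$. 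As the perturbation is $O(t^2)$, along $\{t=0\}$ one has $h_{ij}=g_{ij}$ and $\partial_t h_{ij}=\partial_t g_{ij}$, so $\{t=0\}$ is still $h$-minimal with $|\mathbf{A}_h|^2$ unchanged, while $\partial_t^2 h_{ij}|_{t=0}$ is lowered by $2M\zeta\,g_{ij}(x,0)$, which raises $\Ric_h(\nu,\nu)$ along $\{t=0\}$ by $nM\zeta$. Hence $|\mathbf{A}_h|^2+\Ric_h(\nu,\nu)\geq M$ on $\tilde{\psi}^{-1}(\{\zeta=1\})$ for $M$ large, and testing the Rayleigh quotient of $-L_h$ with a fixed bump supported there gives first Dirichlet eigenvalue $\leq C-M<0$, on each connected component of $\tilde{\Gamma}$ since $p$ has one preimage per copy.

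Finally one picks $f$ and handles the boundary. Let $\phi_1>0$ be a first Dirichlet eigenfunction of $-L_h$ on $\tilde{\Gamma}$. If $\partial\tilde{\Gamma}=\varnothing$, take $f=\phi_1$ and conclude at once from $H_h=-s|\lambda_1|\phi_1+O(s^2)$ together with $\phi_1\geq c>0$. If $\partial\tilde{\Gamma}\neq\varnothing$, set $f=\phi_1\chi$, where $\chi\equiv1$ off a thin collar of $\partial\tilde{\Gamma}$ and $\chi$ vanishes to infinite order at $\partial\tilde{\Gamma}$, e.g.\ $\chi\asymp e^{-1/d(\cdot,\partial\tilde{\Gamma})}$ in the collar. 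Then $L_h f=|\lambda_1|\phi_1\chi+2\langle\nabla\phi_1,\nabla\chi\rangle+\phi_1\Delta_h\chi$; using that $\Delta_h\chi>0$ with $\Delta_h\chi\asymp\chi\,d(\cdot,\partial\tilde{\Gamma})^{-4}$ in the collar and the Hopf inequality $\phi_1\asymp d(\cdot,\partial\tilde{\Gamma})$ near $\partial\tilde{\Gamma}$, one checks $L_h f>0$ on all of $\tilde{\Gamma}$ (the term $\phi_1\Delta_h\chi$ dominates in the collar), and moreover that near $\partial\tilde{\Gamma}$ the linear term $-sL_h f$ still beats the $O(s^2)$ remainder, since $f$, $\nabla f$, $\nabla^2 f$ and hence the remainder all carry the factor $\chi$, which dominates any power of $d(\cdot,\partial\tilde{\Gamma})$. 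So $H_h(\tilde{\exp}_{\tilde{\psi},sf}(\tilde{\Gamma}))<0$ for all small $s>0$, and rescaling $f$ gives the claim for all $s\in(0,1]$.

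I expect the main obstacle to be the metric construction of the second paragraph, namely engineering a single perturbation confined to $U$ that preserves $h$-minimality of $\psi(\Gamma)$ while forcing the first Dirichlet eigenvalue to become negative, together with the boundary estimate of the third paragraph, which is precisely why $f$ must be arranged to vanish to infinite order at $\partial\tilde{\Gamma}$ rather than merely linearly.
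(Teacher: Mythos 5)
Your proof follows the same overall strategy as the paper's: modify the metric inside $U$ so that $\tilde\psi$ stays $h$-minimal but the Dirichlet Jacobi operator on $\tilde\Gamma$ acquires a negative first eigenvalue, then push along an eigenfunction-like $f$ and invoke (\ref{derivative}). The instability step is engineered differently, though. The paper makes a conformal change $h=e^{2\varphi}g$ with $d\varphi(\nu)=0$ on the hypersurface (so minimality is preserved by (\ref{conformal change})), taking $\varphi$ to be the stereographic conformal factor $\log\varrho$ in blown-up coordinates so that $\tfrac{1}{r_i^2}\Phi_i^*h_i$ converges in $C^2$ to a hemisphere metric, for which the equatorial disk is unstable. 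You instead add a Fermi-coordinate perturbation $-Mt^2\zeta(x)\eta(t)\,g_{ij}(x,0)$ that leaves $h_{ij}$ and $\partial_th_{ij}$ unchanged on $\{t=0\}$ (hence preserves minimality and $|\mathbf A|^2$) while lowering $\partial_t^2h_{ij}$, raising $\Ric_h(\nu,\nu)$ by $nM\zeta$ via the traced Riccati equation. Both constructions make $|\mathbf A_h|^2+\Ric_h(\nu,\nu)$ large on a fixed subdomain and so drive the Rayleigh quotient negative; yours is perhaps more elementary and avoids the rescaling argument, at the price of producing a non-conformal $h$ (which the lemma does not require).

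The boundary step in your write-up has a genuine gap. You set $f=\phi_1\chi$ with $\chi$ vanishing to infinite order and assert $L_hf>0$ on all of $\tilde\Gamma$, citing $\Delta_h\chi>0$ and $\Delta_h\chi\asymp\chi\,d^{-4}$ in the collar. But any smooth increasing cutoff with $\chi(0)=0$, $\chi\equiv1$ outside a collar, and infinite-order vanishing at $d=0$ necessarily has $\chi''<0$ in the region where it levels off (since $\chi'$ must go from $0$ back to $0$), so the sign $\Delta_h\chi>0$ and the stated asymptotics only hold near $d=0$. In the transition zone the term $\phi_1\Delta_h\chi$ is negative and of fixed size, and your argument gives no reason why $|\lambda_1|\phi_1\chi+2\langle\nabla\phi_1,\nabla\chi\rangle$ should dominate it; in particular, enlarging $M$ to make $|\lambda_1|$ large does not obviously help, because the ground state $\phi_1$ then concentrates near $U$ and may be small exactly in the transition zone. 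The subsequent claim that the $O(s^2)$ remainder is controlled because it ``carries the factor $\chi$'' is correct near $d=0$ but inherits the same uncontrolled transition region. (For reference, the paper's own treatment of the boundary is quite brief: it takes $f=\phi_1$ itself, smooth up to $\partial\tilde\Gamma$ by elliptic regularity, rescaled to have small $L^\infty$ norm, and does not introduce a cutoff at all; your instinct that the boundary deserves more scrutiny is reasonable, but the multiplicative cutoff trades the original difficulty at $\partial\tilde\Gamma$ for a new one in the transition zone.)
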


\begin{proof}
Let $\lambda$ be the lowest eigenvalue of the Jacobi operator $L$ of $\tilde{\Gamma}$ (with Dirichlet condition at the boundary if $\partial \tilde{\Gamma}\neq \varnothing$) endowed with the outward unit normal $\nu$. We have 
\begin{align*}
\lambda & = \inf \int_{\tilde{\Gamma}} |\nabla u|^2-(|\mathbf{A}|^2+\Ric(\nu,\nu))u^2 dvol_g
\end{align*}
where the infimum is taken among all functions $u\in H^1(\tilde{\Gamma})$ ($H^1_0(\tilde{\Gamma})$ if $\partial \tilde{\Gamma}\neq \varnothing$) of $L^2$-norm one. We argue that by pertubating the metric in $U$ if necessary, we can make $\tilde{\Gamma}$ unstable. More precisely, we will construct a metric $h$ coinciding with $g$ outside $U$ such that $\psi$ is still minimal and the lowest eigenvalue for the Jacobi operator computed with $h$, called $\lambda_1$, is negative. Define the conformally changed new metric $h = \exp(2\varphi) g$, where $\varphi\in C^\infty(M)$ will be defined later. Let $\mathbf{A}_h$ denote the second fundamental form with respect to $h$. One can check the following formula:
\begin{align} \label{conformal change}
\mathbf{A}_h(a,b) = \exp(\varphi)(\mathbf{A}(a,b)+g(a,b)d\varphi(\nu)) \quad \forall a,b \in T\tilde{\Gamma}.
\end{align}
Since $\psi(\Gamma)$ is embedded in the neighborhood $U$ of $p$, it is not difficult to see that there exist a sequence of radii $\{r_i\}$ converging to zero and diffeomorphisms $\Phi_i:  B_{\mathbb{R}^{n+1}}(0,3) \to B(p,r_i)\subset M$ such that 
\begin{equation} \label{plaaa}
\frac{1}{r_i^2} \Phi_i^*g \xrightarrow[{i\to \infty}]{} \frac{1}{9}g_{st},
\end{equation}
\begin{equation} \label{colle}
\begin{split} 
\Phi_i(\mathbb{R}^n\cap B_{\mathbb{R}^{n+1}}(0,3)) & =  \psi(\Gamma)\cap B(p,r_i) \text{ and } \\
(\Phi_i^*g)_x(\frac{\partial}{\partial x_k},\frac{\partial}{\partial x_{n+1}})=0  \quad \forall x\in & \mathbb{R}^n\cap B_{\mathbb{R}^{n+1}}(0,3), \forall k\in \{1,...,n\}.
\end{split}
\end{equation}
in the $C^2$ topology, where $\mathbb{R}^n$ denotes the subset $\{(x_1,...,x_n,0)\} \subset \mathbb{R}^{n+1}$ and $g_{st}$ is the standard metric on $\mathbb{R}^{n+1}$. Consider the function 
$$\varrho: B_{\mathbb{R}^{n+1}}(0,3)\to \mathbb{R}$$
$$\varrho(x)=\frac{2}{1+|x|_{st}^2}$$
where $|.|_{st}$ denotes the standard norm on $\mathbb{R}^{n+1}$. Using the previous charts, we can define a conformally changed metric $h_i=\exp(2\phi_i)g$ for each $i$ by choosing a smooth function $\phi_i$ vanishing outside $B(p,r_i)$ and by imposing:
$$\phi_i=(\Phi_i^{-1})^*\log(\varrho) \text{ on } \Phi_i(B_{\mathbb{R}^{n+1}}(0,2)),$$
$$d\phi_i(\nu) =0 \text{ on } \psi(\Gamma)\cap B(p,r_i).$$
By (\ref{colle}) and (\ref{conformal change}), the previous two conditions are consistent and $\psi$ remains a minimal immersion into $M$ with respect to $h_i$ for all $i$.
Note that $\varrho^2g_{st}$ on $B_{\mathbb{R}^{n+1}}(0,2)$ is the metric of constant curvature one (then $B_{\mathbb{R}^{n+1}}(0,1)$ corresponds to one hemisphere of the unit $(n+1)$-sphere) and $\mathbb{R}^n\cap B_{\mathbb{R}^{n+1}}(0,2)$ is an unstable minimal hypersurface with boundary for this metric. Hence, since by (\ref{plaaa})
$$\frac{1}{r_i^2}\Phi_i^*h_i = \frac{\varrho^2}{r_i^2}  \Phi_i^*g \xrightarrow[{i\to \infty}]{C^2} \frac{\varrho^2}{9} g_{st} \text{ on } B_{\mathbb{R}^{n+1}}(0,2),$$
if we take $\varphi = \phi_i$ for $i$ sufficiently large, then $\tilde{\Gamma}$ contains an unstable hypersurface with boundary for $h=\exp(2\varphi) g,$ so it is unstable itself with respect to $h$ and $\lambda_1<0$.

Now using (\ref{derivative}), we can take a positive eigenfunction $f$ associated to $\lambda_1$ which is in $C^\infty(\tilde{\Gamma} \cup\partial \tilde{\Gamma})$ by \cite{Nirenberg} and with sufficiently small $L^\infty$ norm so that for all $s\in (0,1]$, each hypersurface $\tilde{\exp}_{\tilde{\psi},sf}(\tilde{\Gamma})$ is immersed and has negative mean curvature for $h$. This finishes the proof.

\end{proof}

Recall the following definition, introduced in \cite{MaNeWillmore}. If $\Phi:[a,b]\to \mathcal{Z}_n(M,\mathbb{Z}_2)$ is continuous in the flat topology, we define
$$\mathbf{m}(\Phi,r) = \sup\{||\Phi(x)||(B_r(p)) ; x \in [a,b], p\in M\}.$$
We will say that $\Phi$ satisfies the technical condition $[*]$ if 
$$\mathbf{m}(\Phi,r)\to 0 \text{ as } r\to 0.$$
In what follows, we consider an immersed minimal hypersurface $\Sigma=\psi(\Gamma)$, where $\Gamma$ is a closed $n$-manifold. $\Sigma$ will be said to be connected if $\Gamma$ is connected. From the earlier description of the set $E$ (see (\ref{self-intersection})), we know that the boundary of a component $c\in \mathcal{C}_\Sigma$ has positive finite $n$-dimensional Hausdorff measure and is $\mathcal{H}^n$-almost everywhere locally an embedded hypersurface. We will restrict our attention to components $c\in \mathcal{C}_\Sigma$ verifying the following "local separation" property.
$$
\mathbf{(LS)  } \quad \text{ For any } p \in \partial c \backslash E \text{ and } r>0,
B(p,{r})\backslash \Sigma \text{ is not included in } c.
$$
Note that if $\Sigma$ is non-embedded and $\mathcal{H}^n(\Sigma) \leq \mathfrak{A}_S(M) $, then any $c\in \mathcal{C}_\Sigma$ automatically satisfies condition \textbf{(LS)} by Lemma \ref{partition}.
For such a component, the next lemma enables to get rid of a certain subset of $c$ and decrease the area of $\partial c$ inside $c$. It is essential that $\Sigma$ is not embedded. We say that $x\in \partial c$ has a {\it local roof structure} if there is a ball $B(x,r_x)$ and a diffeomorphism $\mathcal{D} : \mathbb{R}^{n+1} \to B(x,r_x)$ such that 
\begin{align*}
&\mathcal{D}(\{x_i=0\})  \subset \Sigma \quad \forall i\in\{ 1,2\} \\
&\text{and }\mathcal{D}(\{x_1>0 , x_2>0\})  \subset c.
\end{align*}

\begin{lemme} \label{Plateau}
Suppose that $\Sigma$ is a non-embedded connected minimal hypersurface in $M$. Consider a component $c\in \mathcal{C}_\Sigma$ satisfying condition \textbf{(LS)}. Then there is a point $x\in \partial c$ having a local roof structure.

Moreover there is a map $\xi : [0,1] \to \mathcal{Z}_n(M,\mathbb{Z}_2)$ continuous in the flat topology such that for all $s\in[0,1]$, $\xi(s) = \partial [|G_s|]$, where $G_s$ are open sets with the following properties:
\begin{enumerate} [label=(\roman*)] 
\item$G_0=c$ and $\forall s\in[0,1]$ $G_s\subset c$, $\mathbf{M}(\xi(s))=\mathcal{H}^n(\partial G_s)$,
\item $\partial G_1 \cap c$ separates $c$ and is piecewise smooth mean convex with respect to the normal pointing outside $G_1$, 

\item $\forall s\in(0,1]$, $\mathcal{H}^n(\partial G_s) < \mathcal{H}^n(\partial c)$,
\item $\xi$ satisfies $[*]$.
\end{enumerate}

\end{lemme}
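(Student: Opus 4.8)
The plan is to obtain $\xi$ by starting from $c$ and pushing $\partial c$ slightly inward, gaining area by ``resolving'' a self-intersection of $\Sigma$ in the manner of the local model $\varphi$, and using a conformal perturbation of the metric to make the resulting dent mean convex. Since $\Sigma$ is non-embedded, the self-intersection set $E$ is non-empty, and because the locus where $\Sigma$ meets itself tangentially has dimension at most $n-2$, there is a point $q\in E$ near which $\Sigma$ is the union of two embedded sheets crossing transversally; these divide a small ball $B(q,\rho_0)$ into four ``quadrants''. By Lemma \ref{partition} the component $c$ meets $B(q,\rho_0)$ either in one quadrant or in a pair of opposite quadrants; I will treat the first case, the second being handled by carrying out the construction in each of the two quadrants. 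Thus near $q$ the set $\partial c$ is the union of two half-sheets meeting along $E\cap B(q,\rho_0)$, and the local picture is exactly one corner of $\varphi$.

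Next I would fix an embedded point $p$ of $\Sigma$ contained in $\partial c$ and with $p\notin B(q,\rho_0)$, and apply Lemma \ref{justeacote} to obtain a metric $h$ that equals $g$ outside a ball $U\ni p$ disjoint from $B(q,\rho_0)$, for which $\Sigma$ is still minimal but is now unstable; let $\lambda_1<0$ be its lowest Jacobi eigenvalue for $h$ and $f>0$ a corresponding eigenfunction on $\tilde\Gamma$, normalised so that $\lVert f\rVert_\infty$ is small. For $s\in[0,1]$ set $\Sigma_s=\tilde\exp_{\tilde\psi,s\delta_1 f}(\tilde\Gamma)$, where $\delta_1>0$ is a small constant; by Lemma \ref{justeacote} each $\Sigma_s$ (with $s>0$) is immersed and has negative mean curvature with respect to $h$ and its natural outward normal. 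Now define $G_s$ to be the region enclosed on the $c$-side by $\Sigma_s$, modified inside $B(q,\rho_0)$ to be the corner of $c$ cut off beyond the two pushed sheets, i.e.\ the analogue of $\varphi(-s\delta_1)$ in the quadrant $c\cap B(q,\rho_0)$, glued continuously to the rest; put $\xi(s)=\partial[|G_s|]$. Since $f>0$ everywhere, $\Sigma_s$ stays at positive distance from $\Sigma$, so $G_s\subset c$ and $\partial G_s\subset c$.

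It then remains to verify the four properties. Property (i) is built in: $G_0=c$ because no push occurs at $s=0$, $G_s\subset c$ as just noted, and $\mathbf M(\xi(s))=\mathcal H^n(\partial G_s)$ because, by Lemma \ref{partition}, no component of $M\setminus\Sigma$ lies on both sides of a point of $\Sigma\setminus E$, so $\partial c$ and hence $\partial G_s$ carry no cancelling multiplicity. For (ii): $\partial G_1\cap c=\partial G_1$ is the portion of $\Sigma_1$ bounding $G_1$, whose only non-smooth locus is the $(n-1)$-dimensional image of $E$ under the push, so it is piecewise smooth; it separates $c$ because it cuts off a non-empty ``sliver'' near $q$ together with the thin collar along the rest of $\partial c$; and negativity of its mean curvature for $h$ and the natural outward normal is exactly positivity of its mean curvature for the normal pointing out of $G_1$, which is condition (ii) of Definition \ref{wmconvex} with $F=\Sigma_1$. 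For (iii): the inward push by $s\delta_1 f$ along $\partial c$ contributes, away from $q$, a change of area of order $\lambda_1 s^2\delta_1^2\lVert f\rVert_{L^2}^2<0$ (second variation along an unstable minimal hypersurface), while near $q$ the replacement by the configuration of $\varphi(-s\delta_1)$ removes a wedge of $\partial c$ whose area is of order $s\delta_1$ and positive; hence $\mathcal H^n(\partial G_s)<\mathcal H^n(\partial c)$ for all $s\in(0,1]$ once $\delta_1$ is small, and since the wedge gain occurs where $h=g$ this strict inequality survives when all areas are computed with $g$, provided $U$ is chosen small enough. Finally (iv): $s\mapsto G_s$ is continuous in the $\mathcal H^{n+1}$-measure, so $\xi$ is continuous in the flat topology, and each $\partial G_s$ is obtained from $\partial c\subset\Sigma$ by altering it inside a fixed bounded region by graphs whose second fundamental forms are bounded uniformly in $s$, so $\mathbf m(\xi,r)\le\mathbf m(|\Sigma|,r)+Cr^n\to 0$.

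The main obstacle is the tension between (ii) and (iii): along a \emph{stable} minimal hypersurface there is no inward deformation of the boundary that decreases area, which is why one must first perturb the metric as in Lemma \ref{justeacote} to make $\Sigma$ unstable. But then one has to recover a \emph{strict} area decrease for the original metric $g$, and this is precisely where non-embeddedness enters: the honest, metric-independent gain comes from resolving the self-intersection according to $\varphi$ in the region where $h=g$, while the eigenfunction push --- whose only essential role is to supply the mean convexity in (ii) --- costs only a higher-order, and favourable, amount. One also has to check that the model $\varphi$ can be implemented on $M$ with $G_s$ genuinely contained in $c$ and with the cut $\partial G_1\cap c$ actually separating $c$.
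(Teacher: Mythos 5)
Your overall strategy (exploit a transversal self-intersection point $q$ of $\Sigma$, apply Lemma~\ref{justeacote} to get a conformal perturbation $h$ of the metric and a push that is $h$-mean-convex, and derive the area drop from the first variation at the wedge) is the paper's strategy, but you apply Lemma~\ref{justeacote} to the \emph{whole} closed immersion $\psi\colon\Gamma\to M$ and push all of $\partial c$ inward, whereas the paper applies it only to the two bounded half-sheets $H_j\cap B(0,1)$ (with Dirichlet data) and modifies $\partial c$ only inside the wedge $\Omega\subset B(q,\delta)$. This difference is where the proposal breaks.

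The gap is in property~(ii). Lemma~\ref{justeacote} gives negative mean curvature of the pushed hypersurface \emph{with respect to $h$}, and $h\neq g$ precisely in the ball $U\ni p$. Since $p\in\partial c$ and you push the entire $\partial c$, your $\partial G_1=\Sigma_1$ passes through $U$; there the $g$-mean-curvature can have the wrong sign, so $\partial G_1\cap c$ need not be piecewise smooth mean convex for $g$, which is what Definition~\ref{wmconvex} (and the use of Proposition~\ref{vector field}/Theorem~\ref{piecew} downstream) requires. You notice the $h$-versus-$g$ issue for the area inequality~(iii) (``since the wedge gain occurs where $h=g$\dots'') but do not address it for~(ii). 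The paper sidesteps this completely: the perturbation balls $U$ around $y_j$ are chosen disjoint from $\overline{\Omega}$, the modified boundary $\partial G_1\cap c$ lies inside $\Omega$ where $h=g$, and hence $h$-mean-convexity there \emph{is} $g$-mean-convexity. A related problem with your global push: $\partial G_1\cap c=\partial G_1$ becomes an immersed image of all of $\Sigma$ and inherits any tangential self-intersections of $\Sigma$ away from $q$, so it need not be a generically immersed hypersurface, again failing Definition~\ref{wmconvex}(i); the paper's local modification produces only two transversally-crossing caps, which are generic by construction.

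In effect you are trying to prove Proposition~\ref{claim} inside Lemma~\ref{Plateau}. The paper's two-step structure is essential: Lemma~\ref{Plateau} first carves a small pocket $\Psi^{-1}(F_1)$ near $q$ so that one can afterwards pick a point $z\in\partial c$ with $B(z,r_0)\cap G_1=\varnothing$; only then does Proposition~\ref{claim} push the remaining boundary using a \emph{second} application of Lemma~\ref{justeacote} localized at $z$, with $\rho_1=W_1\cap G_1$ automatically avoiding the perturbation region so that $g$-mean-convexity is preserved. If you want to salvage the argument, restrict the eigenfunction push to the two half-sheets $H_j\cap B(0,1)$ as in the paper, with the metric perturbation placed outside $\overline{\Omega}$; then $\partial G_1\cap c$ is exactly the dent, the mean convexity is for $g$, and you do not need to worry about the second-variation sign on the rest of $\partial c$.
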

 
\begin{proof}
$\Sigma$ being connected, $\partial c$ is not smooth. One knows that the set $E$ where $\Sigma$ self-intersects is a non-empty set of Hausdorff dimension $n-1$, whereas the set where $\Sigma$ intersects itself tangentially is of Hausdorff dimension $n-2$. Thus there exists a point $q\in \partial c\cap E$ such that in an open neighborhood $B$ of $q$, $\Sigma$ is a finite union of embedded hypersurfaces $\tilde{H}_1$, $\tilde{H}_2$... with boundary in $\partial B$, intersecting two by two transversally. Consider a point $x\in B \cap E$ minimizing the number of hypersurfaces $\tilde{H}_j$ intersecting at $x$, among all the points $x'\in B\cap E$. We can assume that the hypersurfaces passing through $x$ are $\tilde{H}_1$,...,$\tilde{H}_L$. Let $B(x,r_x)\subset B$ be a small ball centered at $x$ such that $\partial B(x,r_x)$ intersects the $H_j$ transversally and $\Sigma\cap B(x,r_x) = \bigcup_{j=1}^L H_j$, where $H_j =\tilde{H}_j \cap \bar{B}(x,r_x)$ are closed $n$-disks intersecting two by two along $(n-1)$-disks. By the minimality property of $x$, for all $3\leq j \leq L$, $H_j \cap H_1 \subset H_1\cap H_2$ and so by symmetry
$$H_j \cap H_k = H_1\cap H_2 \qquad \forall 1\leq j,k \leq L.$$
Hence, the $H_j$ are $n$-disks intersecting along a same $(n-1)$-disk and subdividing $B(x,r_x)$ into $2L$ connected components, one of which at least is contained in $c$. This proves the local roof structure at $x$.

Let $\Omega$ be one component of $B(x,r_x)\backslash \bigcup_{j=1}^L H_j$ contained in $c$. By renumbering the $H_j$, we can assume that it is a connected component of $B(x,r_x)\backslash (H_1\cup H_2)$. In what follows, $j=1 \text{ or }2$, write $\iota_j:H_j\to M$ for the inclusion map, and $\nu_j$ for the unit normal of $H_j$ pointing toward $\Omega$. Write $\bar{\Omega} = \Omega \cup \partial \Omega$. We can apply Lemma \ref{justeacote} to each $H_j$ with a point $y_j$ replacing $p$, where $y_j\in H_j \backslash \bar{\Omega}$, and $U$ a neighborhood of $y_j$ disjoint from $\bar{\Omega}$. In this way, we get positive functions $f_j$ having small $C^1$ norm so that for $j\in\{1,2\}$:
\begin{itemize} 
\item $\tilde{\exp}_{\iota_j,f_j}(H_j) \subset B(x,r_x),$ 
\item in $\Omega$, each hypersurface $ \tilde{\exp}_{\iota_j,f_j}( H_j)$ has negative mean curvature, if endowed with the natural choice of outward unit normal given by $\nu_j$,
\item $\tilde{\exp}_{\iota_1,f_1}(H_1)$ and $\tilde{\exp}_{\iota_2,f_2}(H_2)$ meet transversally. 
\end{itemize}
Define for $s \in [0,1]$
$$F_s=\{x\in \tilde{\exp}_{\iota_j,s'f_j}(H_j) ; j\in\{1,2\}, s'\in[0,s]\} \cap \bar{\Omega},$$
$$G_s = c\backslash F_s.$$
We can use condition $\textbf{(LS)}$ to verify that we are indeed "pushing" $\partial c$ on one side. More precisely, we have for all $s \in (0,1]$:
$$\mathbf{M}(\partial [|G_s|]) = \mathcal{H}^n(\partial G_s) = \mathcal{H}^n(\partial c)+ \mathcal{H}^n( \partial F_s)-2\mathcal{H}^n(F_0).$$ 
The map $\xi : s \in [0,1] \mapsto  \partial [|G_s|]\in \mathcal{Z}_n(M,\mathbb{Z}_2)$ thus satisfies properties $(i)$ (by condition \textbf{(LS)}) and $(ii)$ of the lemma. Point $(iii)$ is a consequence of the first variation formula: since $H_j \cap \bar{\Omega} $ are minimal, only the boundary term appears and because the interior angle between these two hypersurfaces is less than $\pi$, we have
$$\frac{d}{ds}\bigg|_{s=0} \mathcal{H}^n(\partial G_s) <0,$$
so by taking $f_j$ even smaller if necessary, $(iii)$ is verified. Finally, point $(iv)$ will be checked in the Appendix (see Claim 2).

\end{proof}

Equipped with the previous two lemmas, we are now able to prove the following result: if $\Sigma$ is connected and non-embedded then for any $c\in \mathcal{C}_\Sigma$ satisfying \textbf{(LS)}, the boundary $\partial c$ can be approximated by a piecewise smooth mean convex hypersurface with less area. 

\begin{prop} \label{claim}
Let $\Sigma$ be a non-embedded connected minimal hypersurface in $M$. Consider a component $c\in \mathcal{C}_\Sigma$ satisfying condition \textbf{(LS)}. Then there exists a map $\theta : [0,1] \to \mathcal{Z}_n(M,\mathbb{Z}_2)$ continuous in the flat topology such that for all $s\in[0,1]$, $\theta(s) = \partial [|\rho_s|]$, where $\rho_s$ are open sets with the following properties: 
\begin{enumerate} [label=(\roman*)] 
\item $\rho_0 = c$, $\forall  s$ $\rho_s\subset c$ and  $\mathbf{M}(\theta(s))=\mathcal{H}^n(\partial \rho_s)$,
\item $\partial {\rho}_1$ is piecewise smooth mean convex,
\item $\forall s\in(0,1]$, $\mathcal{H}^n(\partial \rho_s) < \mathcal{H}^n(\partial c)$,
\item there exists a homotopically closed set of continuous sweepouts of ${\rho}_1$, called $\Lambda$,

\item $\theta$ satisfies $[*]$.
\end{enumerate}
\end{prop}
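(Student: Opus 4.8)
The plan is to build the deformation $\theta$ by concatenating the two families provided by Lemmas \ref{Plateau} and \ref{justeacote}, and then invoking Theorem \ref{piecew} to get item (iv). First I would apply Lemma \ref{Plateau} to the component $c$: this produces $\xi:[0,1]\to\mathcal{Z}_n(M,\mathbb{Z}_2)$ with $\xi(s)=\partial[|G_s|]$, $G_0=c$, all $G_s\subset c$, strictly decreasing area on $(0,1]$, condition $[*]$, and crucially $\partial G_1\cap c$ piecewise smooth mean convex with respect to the outward normal of $G_1$, separating $c$. The natural candidate is then to set $\rho_s=G_s$ on a first subinterval. But one must check that $\partial G_1$ itself (not just $\partial G_1\cap c$) is piecewise smooth mean convex in the sense of Definition \ref{wmconvex}, i.e.\ that the ambient part of $\partial G_1$ coming from $\partial c\subset\Sigma$ is also mean convex. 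Since $\Sigma$ is minimal, those pieces are minimal rather than strictly mean convex, so a second stage is needed to push the rest of $\partial c$ strictly inward; this is exactly what Lemma \ref{justeacote} (the metric-perturbation trick giving a positive Jacobi-type eigenfunction) is for, but care is needed because Lemma \ref{justeacote} perturbs the metric on a small ball $U$, whereas we want the statement for the fixed metric $g$.

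The cleanest route is probably to reorganize: choose the modification ball $U$ used in Lemma \ref{justeacote} to lie in the interior of $c$ away from the self-intersection point $q$ used in Lemma \ref{Plateau}, run Lemma \ref{Plateau} first to replace a corner of $\partial c$ near $q$ by a strictly mean convex piecewise-smooth piece, then run a Lemma \ref{justeacote}-type push of the remaining (minimal) part of $\partial c$. Because the eigenfunction $f$ of Lemma \ref{justeacote} is positive on $\tilde\Gamma$ and vanishes on $\partial\tilde\Gamma$, the pushed hypersurface agrees with the already-modified mean convex piece along the gluing locus, and one obtains an open set $\rho_1\subset c$ whose full boundary $\partial\rho_1$ is piecewise smooth mean convex: one generically immersed compact two-sided submanifold $F$ (the union of the pushed-in pieces, extended slightly past $c$) has $\rho_1$ as a connected component of $M\setminus F$ and positive mean curvature on $\partial\rho_1$. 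Concatenating the Lemma \ref{Plateau} family with this second push (reparametrized to $[0,1]$), one gets $\theta$ continuous in the flat topology; items (i), (iii), (v) are inherited from the corresponding properties of the two families (for (v) one uses that a finite concatenation of maps satisfying $[*]$ still satisfies $[*]$, checked as in Claim 2 of the Appendix), and item (ii) is the construction just described.

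For item (iv), once $\partial\rho_1$ is piecewise smooth mean convex, I would simply take $\Lambda$ to be the homotopy class (in the sense of Definition \ref{homotopic}) of any one continuous sweepout of $\rho_1$ relative to its boundary: for instance the trivial sweepout obtained from the vector field $Z$ of Proposition \ref{vector field} by flowing $\partial\rho_1$ inward and filling, or more concretely a Morse-function sweepout of $\rho_1$ by level sets of a function equal to the distance to $\partial\rho_1$ near the boundary. One then lets $\Lambda$ be the homotopically closed family it generates; this is a nonempty homotopically closed set of continuous sweepouts of $\rho_1$, which is all (iv) asks. Theorem \ref{piecew} is not actually needed to establish (iv) itself — it is the tool that will later extract a minimal hypersurface from such a $\Lambda$ — so (iv) is just an existence statement about sweepouts of a mean convex domain.

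The main obstacle I expect is item (ii): verifying that after the Lemma \ref{Plateau} deformation, the remaining portion of $\partial c$ (which comes from the minimal hypersurface $\Sigma$ and meets the new strictly-mean-convex corner piece along a lower-dimensional set) can be pushed strictly inward by a single controlled ambient deformation so that the \emph{entire} boundary $\partial\rho_1$ fits the rigid requirements of Definition \ref{wmconvex} — in particular producing the auxiliary generically immersed two-sided $F$ with $\partial\rho_1\cap\partial F=\varnothing$ and the compatible outward normals, while keeping the area strictly below $\mathcal{H}^n(\partial c)$ and preserving $[*]$. This is where the non-embeddedness of $\Sigma$ and the metric-perturbation device of Lemma \ref{justeacote} do the real work, and matching it up along the corner from Lemma \ref{Plateau} is the delicate bookkeeping step; the rest is routine concatenation and continuity in the flat topology.
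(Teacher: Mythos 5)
Your overall plan (concatenate the Lemma \ref{Plateau} family with a Lemma \ref{justeacote} push, then take $\Lambda$ to be the class generated by a Morse-function or distance-function sweepout of $\rho_1$) is the same as the paper's, and your observation that item (iv) is pure existence and does not need Theorem \ref{piecew} is correct. However, there is a concrete error in how you propose to handle the change of metric in Lemma \ref{justeacote}, and it is exactly the point on which the whole of item (ii) hinges. You suggest placing the metric-modification ball $U$ ``away from the self-intersection point $q$,'' and then pushing the remaining part of $\partial c$ inward. But Lemma \ref{justeacote} only produces strict mean convexity with respect to the \emph{modified} metric $h$; $h=g$ holds outside $U$, so any piece of the pushed hypersurface passing through $U$ has no mean-curvature sign for the original $g$. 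If $U$ sits on a part of $\partial c$ that survives into $\partial\rho_1$, you are stuck. The paper does the opposite: after running Lemma \ref{Plateau} to get $G_1$, it chooses $z\in\partial c$ and a radius $r_0$ so that $B(z,r_0)\cap G_1=\varnothing$ — that is, $z$ lies on the wall of the corner near $q$, and $B(z,r_0)\cap c$ is entirely inside the thin layer $c\setminus \bar G_1$ removed by Lemma \ref{Plateau} — and only then runs Lemma \ref{justeacote} with $U=B(z,r_0)$. Since $\rho_1=W_1\cap G_1\subset G_1$, the set $\rho_1$ and its boundary never meet $B(z,r_0)$, so the negative-$h$-mean-curvature of the pushed hypersurface is automatically negative $g$-mean-curvature on the part that actually contributes to $\partial\rho_1$, and the piece of $\partial\rho_1$ coming from $\partial G_1$ is mean convex for $g$ by Lemma \ref{Plateau}(ii). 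This is the trick your write-up is missing, and without it the mean convexity of $\partial\rho_1$ for $g$ is genuinely unestablished.

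A secondary inaccuracy: you invoke ``$f$ vanishes on $\partial\tilde\Gamma$'' to explain why the pushed piece glues onto the Lemma \ref{Plateau} piece. In Proposition \ref{claim} the immersion $\psi:\Gamma\to M$ has $\Gamma$ closed, so $\partial\tilde\Gamma=\varnothing$ and $f$ is positive everywhere; the boundary-vanishing clause of Lemma \ref{justeacote} is used inside Lemma \ref{Plateau} (where one applies it to the disks $B(0,1)\cap H_j$), not here. The gluing in Proposition \ref{claim} is effected instead by taking the intersection $\rho_s = W_{2s-1}\cap G_1$; nothing needs to match up along a lower-dimensional seam, because the two deformations overlap on an open set. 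You would also need, as the paper does via a generic perturbation $\tilde f$ of $\lambda f$ (citing \cite{GG}), to ensure the pushed hypersurface is generically immersed so that $\partial\rho_1$ fits Definition \ref{wmconvex}; you flag this but leave it as an ``obstacle.'' The remaining points — item (iv) by a mollified-distance Morse sweepout and item (v) via the uniform-Lipschitz criterion of Claim 2 — agree with the paper.
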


\begin{proof}

Suppose that $\Sigma= \psi(\Gamma)$ with $\Gamma$ connected, we will use the notations previously introduced. Let $\xi$ and $\{G_s\}_{s\in [0,1]}$ be the map and open sets constructed in Lemma \ref{Plateau}. By the local roof structure at a point of $\partial c$ (see first part of Lemma \ref{Plateau}), there is a point $z\in \partial c$ and a radius $r_0$ so that $B(z,r_0)\cap G_1=\varnothing$ and $\partial c$ is an embedded hypersurface in $B(z,r_0)$. Deform the metric of $M$ inside $B(z,r_0)$ and consider the positive function $f$ constructed in Lemma \ref{justeacote} with $z$ (resp. $B(z,r_0))$ replacing $p$ (resp. $U$). Then for $\lambda \in(0,1)$ small enough, $G_1 \cap \tilde{\exp}_{\tilde{\psi},\lambda {f}}(\tilde{\Gamma})$ is an immersed surface with positive mean curvature (for the original metric) with respect to the outward unit normal pointing toward the boundary of $c$. By \cite[Chapter 3, Proposition 3.2]{GG}, the set of generic immersions is dense in the space of smooth immersions, thus we can find a function $\tilde{f}$ arbitrarily close in the $C^\infty$ topology to $\lambda f$ such that $\tilde{\exp}_{\tilde{\psi},\tilde{f}}$ is a generic immersion and also generically meets $\partial G_1 \cap c$ (which is already generic by Lemma \ref{Plateau} (ii)). Let us choose $\lambda$ small and $\tilde{f}$ close enough to $\lambda f$ so that the mean convexity is preserved and the area does not increase by much. More precisely, if we define for all $s\inÊ[0,1]$
$$  V_s =  \{q\in \tilde{\exp}_{\tilde{\psi},s'\tilde{f}}(\tilde{\Gamma}) ; s'\in [0,s]\},$$
$$W_s = c \backslash V_s,$$
$$\rho_s = \left\{ \begin{array}{rcl}
G_{2s} & \mbox{for}
& s<1/2 \\ W_{2s-1} \cap G_1 & \mbox{for} & s\geq 1/2,
\end{array}\right.$$
then we can assure the following properties to be true:
\begin{enumerate} [label=(\alph*)]
\item $\partial\rho_1$ is piecewise smooth mean convex for the original metric,
\item $\forall s \in [1/2,1]$, $\mathcal{H}^n(\partial \rho_s) < \frac{1}{2}(\mathcal{H}^n(\partial G_1)+\mathcal{H}^n(\partial c)) < \mathcal{H}^n(\partial c).$
\end{enumerate} 
Note that item (b) can be satisfied using \textbf{(LS)} and Lemma \ref{Plateau} (iii): indeed outside a set of Hausdorff dimension $n-1$, $\partial c$ is an embedded hypersurface locally separating $c$ from $M\backslash c$ in the sense of \textbf{(LS)}, so $\partial c$ is only pushed on one side into $c$ by $\tilde{\exp}_{\tilde{\psi},s\tilde{f}}$, where $s \in [0,1]$. Points $(ii)$ and $(iii)$ of our proposition follow readily. Setting $\theta(s) = \partial[|\rho_s|]$, point $(i)$ is a consequence of condition \textbf{(LS)} again. We will check point $(iv)$, namely that there exists a homotopically closed set $\Lambda$ of continuous sweepouts of $\rho_1$, in the Appendix (see Claim 1). We will construct a continuous sweepout $\{S_t\}_{t\in I}$ of $\rho_1$ such that 
$$S_t = \{x\in \rho_1 \cup \partial \rho_1 ; u(x) = t\} \text{ for any } t\in I$$
where $u$ is a Morse function on $\rho_1$ continuous on $\rho_1 \cup \partial \rho_1$ with no critical points in a neighborhood of $\partial \rho_1$ and is obtained by "mollifying the distance function". Finally, the technical point $(v)$ will be also checked in the Appendix (see Claim 2).

\end{proof}

\begin{remarque}
It should be possible to prove that, with the notations in the proof of Proposition \ref{claim}, $s\mapsto \mathcal{H}^n(\partial W_s)$ is decreasing if $\tilde{f}$ is sufficiently small. But even if it assumed to be true, we need Lemma \ref{Plateau} to get rid of a part of $c$, deform the metric $g$ in this part and get a hypersurface being mean convex for the original metric $g$. 
\end{remarque}

The following lemma shows when and how one can retract the boundary of a $c\in \mathcal{C}_\Sigma$ to $0$ and will be crucial for constructing the optimal sweepout in the proof of Proposition \ref{embed}.

\begin{lemme} \label{attention}
Let $\Sigma$ be a non-embbedded connected minimal hypersurface in $M$ and suppose that $\mathcal{H}^n(\Sigma) \leq \mathfrak{A}_S(M) $. For any $c\in \mathcal{C}_\Sigma$, there is a map $\chi_c : I \to \mathcal{Z}_n(M,\mathbb{Z}_2)$ continuous in the flat topology such that:
\begin{enumerate} [label=(\roman*)]
\item $\chi_c(0)=\partial [|c|]$, $\chi_c(1)=0$ and $\spt \chi_c(s) \subset c$ for $s\in (0,1]$,
\item $\forall s\in(0,1]$, $\mathbf{M}(\chi_c(s)) < \mathbf{M}(\partial [|c|]) = \mathcal{H}^n(\partial {c})$,
\item for $j_0$ large enough and for any special chain map $\Phi : I(1,j_0) \to \mathbf{I}_*(M)$ determined by $\{\chi_c(s)\}_{s \in I}$ as in \cite{Alm1},  $$\sum\limits_{\alpha \in I(1,j_0) _1} \Phi (\alpha) = [|c|],$$
\item $\chi_c$ satisfies $[*]$.
\end{enumerate}

\end{lemme}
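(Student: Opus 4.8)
I would produce $\chi_c$ by splicing two deformations. First comes the family $\theta$ furnished by Proposition~\ref{claim}, which lowers $\partial[|c|]$, keeping the mass below $\mathcal H^n(\partial c)$, to the boundary $\partial[|\rho_1|]$ of a region $\rho_1\subset c$ whose boundary is piecewise smooth mean convex; then, run backwards, a continuous sweepout of $\rho_1$ itself, which carries $\partial[|\rho_1|]$ all the way down to $0$. The heart of the argument is to obtain a sweepout of $\rho_1$ every slice of which still has mass $<\mathcal H^n(\partial c)$: this is where the hypothesis $\mathcal H^n(\Sigma)\le\mathfrak A_S(M)$ enters, and I expect it to be the main obstacle.

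\textbf{Step 1 (reduction).} Since $\partial c\subset\Sigma$ we have $\mathcal H^n(\partial c)\le\mathcal H^n(\Sigma)\le\mathfrak A_S(M)$. Apply Proposition~\ref{claim} to $c$: this gives a flat-continuous $\theta:[0,1]\to\mathcal Z_n(M,\mathbb Z_2)$ with $\theta(s)=\partial[|\rho_s|]$, $\mathbf{M}(\theta(s))=\mathcal H^n(\partial\rho_s)$, where $\rho_0=c$, the open sets $\rho_s\subset c$ are decreasing, $\partial\rho_1$ is piecewise smooth mean convex, $\mathcal H^n(\partial\rho_s)<\mathcal H^n(\partial c)$ for $s\in(0,1]$, $\theta$ satisfies $[*]$, and there is a homotopically closed family $\Lambda$ of continuous sweepouts of $\rho_1$; in particular $\mathcal H^n(\partial\rho_1)<\mathcal H^n(\partial c)$.

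\textbf{Step 2 (the cheap sweepout).} The claim is $W(\rho_1,\partial\rho_1,\Lambda)<\mathcal H^n(\partial c)$. As every sweepout contains the slice $\partial\rho_1$ one has $W(\rho_1,\partial\rho_1,\Lambda)\ge\mathcal H^n(\partial\rho_1)$, and since $\mathcal H^n(\partial\rho_1)<\mathcal H^n(\partial c)$ it suffices to exclude $W(\rho_1,\partial\rho_1,\Lambda)\ge\mathcal H^n(\partial c)$. If that held, then $W(\rho_1,\partial\rho_1,\Lambda)>\mathcal H^n(\partial\rho_1)$ and, $\partial\rho_1$ being piecewise smooth mean convex, Theorem~\ref{piecew} would give an embedded minimal hypersurface $\Sigma'\subset\rho_1\subsetneq c$ whose area, counted with multiplicity, equals $W(\rho_1,\partial\rho_1,\Lambda)\ge\mathcal H^n(\partial c)$. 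One then runs a case analysis on a connected component $\Sigma'_0$ of $\Sigma'$: if $\Sigma'_0$ is stable, $\mathcal H^n(\Sigma'_0)\ge\mathfrak A_S(M)\ge\mathcal H^n(\Sigma)$; if $\Sigma'_0$ is unstable, then no embedded stable minimal hypersurface of area $<\mathcal H^n(\Sigma'_0)$ can exist whenever $\mathcal H^n(\Sigma'_0)\le\mathfrak A_S(M)$, so Proposition~\ref{sweepout} applies to $\Sigma'_0$. Combining these with $\mathcal H^n(\partial c)\le\mathcal H^n(\Sigma)\le\mathfrak A_S(M)$ is meant to force the contradiction; making this balancing rigorous (and, if $W$ exceeds $\mathfrak A_S(M)$, handling the extracted stable hypersurface) is the delicate point of the whole proof. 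Granting the claim, pick $\{\Sigma_t\}_{t\in[a,b]}\in\Lambda$ with $\max_t\mathcal H^n(\Sigma_t)<\mathcal H^n(\partial c)$; off a finite set each $\Sigma_t$ coincides with the boundary in $\rho_1$ of an open $\Omega_t$, so $[|\Sigma_t|]=\partial[|\rho_1\setminus\Omega_t|]$ is a cycle, the $[|\Sigma_t|]$ depend flat-continuously on $t$, and $[|\Sigma_a|]=\partial[|\rho_1|]$, $[|\Sigma_b|]=0$, with mass $<\mathcal H^n(\partial c)$ and support in $\overline{\rho_1}\subset\overline c$ throughout.

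\textbf{Step 3 (assembly and verification).} Let $\chi_c$ equal $\theta$ on $[0,\tfrac12]$ and the time-reversed sweepout $t\mapsto[|\Sigma_{b-2(s-1/2)(b-a)}|]$ on $[\tfrac12,1]$, reparametrised to $[0,1]$; it is flat-continuous since both halves equal $\partial[|\rho_1|]$ at $s=\tfrac12$. For $(i)$: $\chi_c(0)=\partial[|c|]$, $\chi_c(1)=[|\Sigma_b|]=0$, and for $s>0$ the support lies in the interior of $c$ --- for the sweepout part because $\Sigma_t\subset\rho_1$, and for the $\theta$-part because, $\Sigma$ being separating under the present hypotheses (exactly as in the proof of Lemma~\ref{partition}), the deformation of Proposition~\ref{claim} can be taken to move $\partial[|c|]$ off $\Sigma$ at once. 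For $(ii)$, both pieces have mass $<\mathcal H^n(\partial c)$ for $s>0$, by Proposition~\ref{claim}$(iii)$ and Step~2. For $(iv)$, $[*]$ for the $\theta$-part is Proposition~\ref{claim}$(v)$, and for the sweepout part $\mathbf{m}(\cdot,r)\to0$ since $\{\Sigma_t\}$ is a continuous family of piecewise smooth hypersurfaces of uniformly bounded area (cf. the Appendix). Finally $(iii)$: the $\rho_s$ decrease from $c$ to $\rho_1$ and the $\Omega_t$ increase from $\varnothing$ to $\rho_1$, so consecutive slices of $\chi_c$ differ (mod $2$) by the boundary of a region of small $\mathcal H^{n+1}$-measure and small mass (here $[*]$ is used); for $j_0$ large each such $\mathcal F$-isoperimetric choice in Almgren's chain map $\Phi$ is therefore the obvious region difference, and these telescope to $[|c\setminus\rho_1|]+[|\rho_1|]=[|c|]$, i.e. $\sum_{\alpha\in I(1,j_0)_1}\Phi(\alpha)=[|c|]$.
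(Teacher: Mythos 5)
Your overall architecture is the paper's: splice the approximating family $\theta$ from Proposition~\ref{claim} with a continuous sweepout of $\rho_1$, then verify (i)--(iv). But Step~2 contains a genuine gap, which you yourself flag, and it is precisely the step that carries the weight of the lemma.

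Your case analysis on the components $\Sigma'_0$ of the min-max hypersurface does not close. In the stable case you obtain $\mathcal H^n(\Sigma'_0)\ge\mathfrak A_S(M)$, but this is a lower bound consistent with $W\ge\mathcal H^n(\partial c)$, not a contradiction: nothing forbids a stable embedded minimal hypersurface of large area sitting inside $\rho_1$. In the unstable case, Proposition~\ref{sweepout} applies only if no stable embedded hypersurface of area $<\mathcal H^n(\Sigma'_0)$ exists; you have no control on $\mathcal H^n(\Sigma'_0)$ relative to $\mathfrak A_S(M)$, and even if it did apply, it produces a min-max hypersurface in $M$ whose area bears no obvious relation to $\mathcal H^n(\partial c)$. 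So the "balancing" you hope for does not force a contradiction, and the attempt to directly estimate the min-max area is the wrong route.

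The missing idea in the paper's proof is to use the min-max hypersurface not as a quantity but as a topological barrier. Suppose for contradiction that $\mathcal H^n(\partial\rho_1)<W(\rho_1,\partial\rho_1,\Lambda)$. Theorem~\ref{piecew} yields an embedded (possibly disconnected) minimal hypersurface $\Gamma\subset\rho_1$. Cut $\rho_1$ along $\Gamma$ (passing to the double cover of $\Gamma$ if it is one-sided) to get a compact manifold $N$ with boundary $\partial\rho_1\cup\Gamma$ (or $\partial\rho_1\cup\tilde\Gamma$). In $N$ the class of $\partial\rho_1$ in relative homology is non-trivial, and $\partial\rho_1$ and $\Gamma$ both act as barriers, so one can minimize area in this class by \cite{Morgan} and obtain a stable embedded minimal hypersurface $S$ with
$$\mathcal H^n(S)\le\mathcal H^n(\partial\rho_1)<\mathcal H^n(\partial c)\le\mathcal H^n(\Sigma)\le\mathfrak A_S(M),$$
contradicting the definition of $\mathfrak A_S(M)$. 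Hence in fact $W(\rho_1,\partial\rho_1,\Lambda)\le\mathcal H^n(\partial\rho_1)<\mathcal H^n(\partial c)$, which gives the sweepout $\{T_t\}$ with $\max_t\mathcal H^n(T_t)<\mathcal H^n(\partial c)$. Note this also simplifies your Step~2: the paper shows $W\le\mathcal H^n(\partial\rho_1)$, not just $W<\mathcal H^n(\partial c)$; since $\mathcal H^n(\partial\rho_1)<\mathcal H^n(\partial c)$ is already known, this is the cleaner claim. The rest of your Step~3 (assembling $\chi_c$, checking (i),(ii),(iv), and the degree computation in (iii) via telescoping isoperimetric choices) matches the paper's reasoning and is essentially correct, modulo the Appendix's Claim~2 for the $[*]$ condition.
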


\begin{proof}

By Lemma \ref{partition}, $c$ satisfies condition \textbf{(LS)}. According to Proposition \ref{claim} $(iv)$, we can define $W( {\rho}_1, \partial  {\rho}_1,\Lambda)$. Suppose that we have 
\begin{equation} \label{inequ}
\mathcal{H}^n(\partial {\rho}_1) < W( {\rho}_1, \partial  {\rho}_1,\Lambda).
\end{equation}
By Theorem \ref{piecew}, there is an embedded connected minimal hypersurface $\Gamma_0 \subset \rho_1$. We have now a manifold $N$ whose boundary is the union of $\partial  {\rho}_1$ and a hypersurface isometric either to $\Gamma_0$ or to the double cover of $\Gamma_0$. The homology class of $\partial  {\rho}_1$ in $N$ is non-trivial and by \cite{Morgan} we can minimize the area to get an embedded stable minimal hypersurface $S$. Its area is not larger that $\mathcal{H}^n(\partial \rho_1)$, which is strictly smaller than $\mathcal{H}^n(\partial c)$. This is a contradiction with our assumption $\mathcal{H}^n(\Sigma) \leq \mathfrak{A}_S(M) $. So (\ref{inequ}) is false, i.e. in fact $\mathcal{H}^n(\partial {\rho}_1) = W( {\rho}_1, \partial  {\rho}_1,\Lambda)$ and one can find a family $\{T_t\}_{t\in [0,1]}\in \Lambda$ with 
$$\mathcal{H}^n(T_t)  < \mathcal{H}^n(\partial c)=  \mathbf{M}(\partial [|c|])\quad  \forall t.$$
Define $\chi_c(s)$ to be $\theta(2s)$ if $s\in [0,1/2]$ and the current in $\mathcal{Z}_n(M,\mathbb{Z}_2)$ determined by $T_{2s-1}$ if $s\in [1/2,1]$. 

Let's check that the continuous map $\chi_c:I\to \mathcal{Z}_n(M,\mathbb{Z}_2)$ satisfies the four conditions. Points $(i)$ and $(ii)$ in the conclusion of the theorem are clearly true by construction. Point $(iii)$ is also true. Indeed, $\{\chi_c(s)\}_{s\in [0,1]}$ "foliates" the open set $c$ and one can conclude by employing the methods in \cite[Theorem 5.8]{Zhou}. Finally the last technical condition $(iv)$ can be proved as follows. Firstly, Lemma \ref{claim} $(v)$ shows that 
$$\sup\{||\chi_c(x)||(B_r(p)) ; x \in [0,1/2], p\in M\} \to 0 \text{ as } r\to 0.$$
Then take a $\tau\in(0,1/2]$, Proposition 5.1 in \cite{Zhou} applies to $\chi_c\restriction_{[1/2+\tau,1]}$ and so 
$$\sup\{||\chi_c(x)||(B_r(p)) ; x \in [1/2+\tau,1], p\in M\} \to 0 \text{ as } r\to 0.$$
In the Appendix (see Claim 2), we will show that if $\tau$ is chosen small enough,
$$\sup\{||\chi_c(x)||(B_r(p)) ; x \in [1/2,1/2+\tau], p\in M\}\to 0 \text{ as } r\to 0,$$
which shows that indeed $\mathbf{m}(\chi_c,r)  \to 0$ as $r\to 0$. This ends the proof.

\end{proof}

We are now ready to show that a least area minimal hypersurface is necessarily embedded. The regularity result \cite{P} [Theorem $7.12$] will be used in the proof. Even though it is shown by Pitts only for $2\leq n\leq 5$, the result is still true for $n=6$ thanks to the curvature estimates of Schoen and Simon (see \cite[Section 7]{SchoenSimon}). Besides each time Theorem $7.12$ of \cite{P} and Theorem $5.5$ of \cite{Zhou} are invoked, we actually apply their "modulo 2" versions (see Remark \ref{modulo}).

\begin{prop} \label{embed}
Suppose that $\Sigma$ is a non-embbedded connected minimal hypersurface and that $\mathcal{H}^n(\Sigma) \leq \mathfrak{A}_S(M) $, then there exists a connected unstable minimal hypersurface $\Sigma_0$ such that:
\begin{enumerate} [label=(\roman*)]
\item $\Sigma_0$ is embedded,
\item $\mathcal{H}^n(\Sigma_0)=\mathbf{L}(\Pi)  < \mathcal{H}^n(\Sigma)$, where $\Pi$ is the homotopy class of mappings in $(\mathcal{Z}_n(M,\mathbf{M},\mathbb{Z}_2),\{0\})$ corresponding to the fundamental class in $H_{n+1}(M,\mathbb{Z}_2)$.

\end{enumerate}

\end{prop}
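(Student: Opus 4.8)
The plan is to construct an "optimal" non-trivial sweepout $A : I \to \mathcal{Z}_n(M,\mathbb{Z}_2)$, continuous in the flat topology, such that $A(1/2) = \Sigma$ (as a current mod $2$), the endpoints $A(0) = A(1) = 0$, and every slice $A(s)$ with $s \neq 1/2$ has mass strictly less than $\mathcal{H}^n(\Sigma)$. Once such a sweepout is in hand, one applies the interpolation machinery of Marques--Neves (\cite{MaNeWillmore}, extended by Zhou \cite{Zhou}, in the modulo $2$ setting — see Remark \ref{modulo}), which is available precisely because $A$ satisfies the technical condition $[*]$; this yields a discrete homotopy sequence of mappings $S = \{\phi_i\}$ into $(\mathcal{Z}_n(M,\mathbf{M},\mathbb{Z}_2),\{0\})$ with $\mathbf{L}(S) \leq \mathcal{H}^n(\Sigma)$ and, by the mass control on the interior slices, the strict inequality $\mathbf{L}(\Pi) < \mathcal{H}^n(\Sigma)$ for the homotopy class $\Pi$ of $S$. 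Using the isomorphism (\ref{iso}) together with \cite[Theorem 5.8]{Zhou} one checks that $\Pi$ corresponds to the fundamental class of $H_{n+1}(M,\mathbb{Z}_2)$; in particular $\mathbf{L}(\Pi) > 0$ since $M$ is connected and the fundamental class is non-trivial. Then by Pitts' theory (\cite[Theorem 4.10]{P}) there is a stationary varifold which is $\mathbb{Z}_2$-almost minimizing in small annuli and has mass $\mathbf{L}(\Pi)$, so by Remark \ref{modulo}(1) (the modulo $2$ version of \cite[Theorem 7.12]{P}) it is a smooth embedded minimal hypersurface $\Sigma_0$ with $\mathcal{H}^n(\Sigma_0) = \mathbf{L}(\Pi) < \mathcal{H}^n(\Sigma)$. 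Finally $\Sigma_0$ is unstable: if it were stable it would be an embedded stable minimal hypersurface with $\mathcal{H}^n(\Sigma_0) < \mathcal{H}^n(\Sigma) \leq \mathfrak{A}_S(M)$, contradicting the definition of $\mathfrak{A}_S(M)$; and we may pass to a connected component, which is still embedded, minimal, and has area $\leq \mathbf{L}(\Pi)$, unstable after discarding stable components.

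The construction of $A$ is where the work lies, and it proceeds via the partition from Lemma \ref{partition}. Since $\Sigma$ is non-embedded and $\mathcal{H}^n(\Sigma) \leq \mathfrak{A}_S(M)$, that lemma gives a partition of $\mathcal{C}_\Sigma$ into two classes $\mathcal{C}_1, \mathcal{C}_2$ with $\Sigma = \partial\bigl(\bigcup_{c \in \mathcal{C}_i} c\bigr)$ (as mod $2$ currents) for $i = 1, 2$. The idea, following the local model $\varphi(t)$ described in the outline, is to retract $\Sigma$ to $\varnothing$ in two ways: on $[0,1/2]$ we shrink each component $c \in \mathcal{C}_1$ to the empty set using the maps $\chi_c$ from Lemma \ref{attention}, performing the retractions simultaneously over all $c \in \mathcal{C}_1$ (rescaling parameters so they all finish at $s = 1/2$); and on $[1/2,1]$ we do the same with the components $c \in \mathcal{C}_2$, run in reverse so that $A(1/2) = \Sigma$ and $A(1) = 0$. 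Concretely, for $s \in [0,1/2]$ one sets $A(s) = \sum_{c \in \mathcal{C}_1} \chi_c(\text{rescaled } s)$ (a finite or, after truncation, controlled sum of currents with disjoint supports inside distinct components), and symmetrically on the other half. The key quantitative input is Lemma \ref{attention}(ii): each $\chi_c(s)$ for $s > 0$ has mass strictly less than $\mathcal{H}^n(\partial c)$, so since the slices live in disjoint components, $\mathbf{M}(A(s)) < \sum_{c \in \mathcal{C}_i}\mathcal{H}^n(\partial c) = \mathcal{H}^n(\Sigma)$ for $s \neq 1/2$, by the area identity in Lemma \ref{partition}. Flat continuity follows from the flat continuity of each $\chi_c$ together with a uniform smallness argument near the "collapsed" parameter values, and condition $[*]$ for $A$ follows from condition $(iv)$ of Lemma \ref{attention} for each $\chi_c$, again using disjointness of supports to add the local mass bounds. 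That $\Pi$ is the fundamental class uses Lemma \ref{attention}(iii): the special chain map associated to $\chi_c$ realizes $[|c|]$, so summing over $c \in \mathcal{C}_1$ gives $\bigl[\bigcup_{c\in\mathcal{C}_1} c\bigr] = [M] \in H_{n+1}(M,\mathbb{Z}_2)$.

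The main obstacle is handling the possibly infinite family $\mathcal{C}_\Sigma$ of components of $M \setminus \Sigma$ and making the simultaneous retraction honestly continuous in the flat topology while keeping all slices of mass strictly below $\mathcal{H}^n(\Sigma)$. If $\mathcal{C}_1$ has infinitely many components, one must order them by decreasing area (only finitely many can have area above any $\varepsilon > 0$, since $\sum \mathcal{H}^n(\partial c) = \mathcal{H}^n(\Sigma) < \infty$), collapse the "big" ones first and the small ones later, and verify that the tail contributes negligibly both to mass and to the flat norm — this is the bookkeeping that replaces the elementary Birkhoff-process picture of \cite{CC}. A secondary subtlety is that $\chi_c(0) = \partial[|c|]$ but these currents for different $c$ share the common boundary pieces along $\Sigma$, so one has to check that the mod $2$ sum $\sum_{c\in\mathcal{C}_1}\partial[|c|]$ really equals $\Sigma$ (it does, by Lemma \ref{partition}, since each codimension-one face of $\Sigma$ lies in the closure of exactly one $\mathcal{C}_1$-component and one $\mathcal{C}_2$-component) and that the retraction is arranged so that at the moment a component $c$ is being collapsed its slice still glues correctly with the already-present boundaries of the not-yet-touched components. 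Once continuity, the mass bound, and $[*]$ are secured, the remainder is a direct appeal to the interpolation theorem, Pitts' regularity, and the definition of $\mathfrak{A}_S(M)$.
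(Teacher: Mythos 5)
Your overall strategy — build the sweepout $A$ by retracting $\Sigma$ to $\varnothing$ in two ways using the maps $\chi_c$ from Lemma \ref{attention} over the partition $\mathcal{C}_1,\mathcal{C}_2$ of Lemma \ref{partition}, pass through interpolation to a discrete homotopy sequence, identify its class with the fundamental class via the isomorphism (\ref{iso}) and Lemma \ref{attention}(iii), and then invoke Pitts' theory — matches the paper's. However, there are two genuine gaps at the points that carry most of the weight.

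First, you assert that \textquotedblleft by the mass control on the interior slices, the strict inequality $\mathbf{L}(\Pi) < \mathcal{H}^n(\Sigma)$\textquotedblright\ follows. It does not. Since $A(1/2) = \Sigma$, the maximal slice mass of $A$ equals $\mathcal{H}^n(\Sigma)$, so interpolation only yields $\mathbf{L}(\Pi) \leq \mathcal{H}^n(\Sigma)$; $\mathbf{L}(\Pi)$ is an infimum over the whole homotopy class and having strict mass drop at all other slices of one particular sweepout does not force the width below $\mathcal{H}^n(\Sigma)$. The paper rules out equality by a separate argument: if $\mathbf{L}(\Pi) = \mathcal{H}^n(\Sigma)$ then $S$ is a critical sequence, one shows (via \cite[Theorem 5.5 (1),(3)]{Zhou} and the mass--flat convergence fact from \cite[Chapter 2, 2.1, (18), (f)]{P}) that the critical set $\mathbf{C}(S)$ reduces to $\{|\Sigma|\}$, hence by \cite[Theorem 4.3 (2), Theorem 4.10]{P} the varifold $|\Sigma|$ is $\mathbb{Z}_2$-almost minimizing in small annuli, and Pitts' regularity would then make $\Sigma$ embedded, contradicting the hypothesis. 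Without this contradiction argument the dichotomy between $<$ and $=$ is unresolved and the proof does not close.

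Second, after producing the (possibly disconnected, possibly with multiplicity) min-max hypersurface $\Sigma_0$ you propose to \textquotedblleft pass to a connected component, which ... has area $\leq \mathbf{L}(\Pi)$,\textquotedblright\ but that would in general destroy the required equality $\mathcal{H}^n(\Sigma_0) = \mathbf{L}(\Pi)$. The paper gets connectedness and multiplicity one simultaneously from Proposition \ref{sweepout}: each connected component $\Sigma'$ of $\Sigma_0$ is embedded and unstable, there is no embedded stable minimal hypersurface of area $<\mathcal{H}^n(\Sigma')$ since $\mathcal{H}^n(\Sigma')\leq \mathbf{L}(\Pi)< \mathcal{H}^n(\Sigma)\leq\mathfrak{A}_S(M)$, and Proposition \ref{sweepout} then produces a min-max hypersurface for the fundamental class of area $\leq \mathcal{H}^n(\Sigma')$, forcing $\mathcal{H}^n(\Sigma')\geq\mathbf{L}(\Pi)$. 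Combined with the total (with multiplicity) area being $\mathbf{L}(\Pi)$, this forces $\Sigma_0$ connected with multiplicity one and area exactly $\mathbf{L}(\Pi)$. Your \textquotedblleft discard stable components\textquotedblright\ aside is also unnecessary: in the regime $\mathcal{H}^n(\Sigma)\leq\mathfrak{A}_S(M)$ any component, having area $\leq\mathbf{L}(\Pi)<\mathfrak{A}_S(M)$, is automatically unstable.

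(Your discussion of the potentially infinite family $\mathcal{C}_\Sigma$ and the ordering-by-area truncation is a reasonable concern, but it is a side issue relative to the two gaps above and the paper does not dwell on it.)
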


\begin{proof}

Let $\mathcal{C}_1$, $\mathcal{C}_2$ be the classes given by Lemma \ref{partition}. Define $A :I \to \mathcal{Z}_n(M,\mathbb{Z}_2)$ by
$$
A(s) = \left\{ \begin{array}{rcl}
 \sum\limits_{c \in \mathcal{C}_1} \chi_{c}(1-2s) & \mbox{for} & s \in [0,1/2] \\ 

  \sum\limits_{c \in \mathcal{C}_2} \chi_{c}(2s-1) & \mbox{for} & s \in [1/2,1], \\
\end{array}\right.$$
where $\chi_c$ are the maps constructed in Lemma \ref{attention}. It is well defined at $1/2$ because we are considering currents modulo $2$. By lemma \ref{attention}, this map $A: I \to \mathcal{Z}_n(M,\mathbb{Z}_2) $ is continuous in the flat topology and
$$\sup_{x \in I} \mathbf{M}(A(x)) <\infty \text{ and } \lim_{r \to 0} \mathbf{m}(A,r) = 0.$$

Thus $A$ determines by interpolation (\cite[Theorem 5.5]{Zhou}) a homotopy sequence of mappings $S= \{\phi_i\}_{i\in \mathbb{N}}$ where
$$\phi_i : I(1,n_i)_0 \to \mathcal{Z}_n(M,\mathbf{M},\mathbb{Z}_2) \text{ with fineness $\delta_i$, } \quad \lim\limits_{i\to \infty} \delta_i= 0,  \lim\limits_{i\to \infty} n_i = \infty.$$
We want to show that $S$ belongs to a homotopy class of mappings into $(\mathcal{Z}_n(M,\mathbf{M},\mathbb{Z}_2),\{0\})$, called $\Pi$, which is non-trivial. Because of \cite[Theorem 13.4]{Alm2} (or \cite[Theorem 4.6]{P}) and \cite[Theorem 8.2]{Alm1}, this will imply $\mathbf{L}(\Pi) >0$ so the min-max theory will produce non-trivial minimal hypersurfaces. For this purpose, recall that by \cite[Theorem 4.6]{P}, $$\pi_1^{\sharp}(\mathcal{Z}_n(M,\mathbf{M},\mathbb{Z}_2),\{ 0\}),  \pi_1^{\sharp}(\mathcal{Z}_n(M,\mathcal{F},\mathbb{Z}_2),\{ 0\}) \text{ and } \pi_1(\mathcal{Z}_n(M,\mathcal{F},\mathbb{Z}_2),\{ 0\})$$ are all naturally isomorphic. The map $A$ is continuous in the flat topology, so by restricting $A$ to $I(1,n_i)$ we obtain a $(1,\mathcal{F})$-homotopy sequence of mappings into $\pi_1^{\sharp}(\mathcal{Z}_n(M,\mathcal{F},\mathbb{Z}_2),\{0\})$, called $\tilde{A}$. Since by \cite[Theorem 5.5 (3)]{Zhou},
$$\sup\{\mathcal{F}(\phi_i(x) - A(x)) ; x \in I(1,n_i)\}\leq \delta_i,$$
$S$ determines the same $(1,\mathcal{F})$-homotopy class of mappings into $\mathcal{Z}_n(M,\mathcal{F},\mathbb{Z}_2),\{0\})$ as $\tilde{A}$, that is:
$$[S]=[\tilde{A}] \in\pi_1^{\sharp}(\mathcal{Z}_n(M,\mathcal{F},\mathbb{Z}_2),\{0\}).$$
$[\tilde{A}] $ is non-trivial if and only if the class $[A]$ is non-trivial in $\pi_1(\mathcal{Z}_n(M,\mathcal{F},\mathbb{Z}_2),\{ 0\}) $. But by Lemma \ref{attention} $(iii)$, the method described in \cite{Alm1} associates to $[A]$ the homology class 
$$\big{[}\hspace{0.1cm} -\sum\limits_{c \in {\mathcal{C}}_1} (-[|c|]) +  \sum\limits_{c \in {\mathcal{C}}_2} [|c|]  \hspace{0.1cm}\big{]} \in H_{n+1}(M,\mathbb{Z}_2)=\mathbb{Z}_2,$$
and this is equal to the non-zero fundamental class $[ \hspace{0.1cm}[|M|]\hspace{0.1cm} ]$, for $\mathcal{C}_1$ and $\mathcal{C}_2$ form a partition of $\mathcal{C}_\Sigma$. Consequently, $S$ belongs to the homotopy class of mappings $\Pi$ which satisfies $\mathbf{L}(\Pi) >0$.

By Lemma \ref{partition} and Lemma \ref{attention} $(ii)$, we have $\max\limits_{s \in I} \mathbf{M}(A(s)) \leq \mathcal{H}^n(\Sigma)$, so the interpolation theorem \cite[Theorem  5.5 (1)]{Zhou} implies that $\mathbf{L}(\Pi) \leq  \mathcal{H}^n(\Sigma)$. In the case where this inequality is strict then by \cite[Theorem 4.10, Theorem 7.12]{P}, there is an embedded minimal hypersurface $\Sigma_0$ (possibly disconnected and with multiplicity) whose area is $\mathbf{L}(\Pi)>0$. Since each connected component of $\Sigma_0$ is unstable, and by Proposition \ref{sweepout} has area larger than or equal to $\mathbf{L}(\Pi)$, it follows that $\Sigma_0$ is actually connected and $$\mathcal{H}^n(\Sigma_0) = \mathbf{L}(\Pi) <\mathcal{H}^n(\Sigma).$$

The case of equality is in fact impossible. Indeed, suppose $\mathbf{L}(\Pi) = \mathcal{H}^n(\Sigma)$, then $S = \{\phi_i\}$ is a critical sequence. Consider a sequence of slices $\phi_i(\alpha^i)$, where $  \alpha^i=[x_i] \in I(1,n_i)_0$, such that $$\mathbf{M}(\phi_i(\alpha^i)) \to \mathbf{L}(\Pi).$$
Because of \cite[Theorem 5.5 (1)]{Zhou}, necessarily $x_i\to 1/2$ so $\phi_i(\alpha^i)$ converge to $\Sigma$ in the flat topology by \cite[Theorem 5.5 (3)]{Zhou}. It is known that if $T_j \in \mathcal{Z}_k(M,\mathbb{Z}_2)$ converge to $T \in \mathcal{Z}_k(M,\mathbb{Z}_2)$ in the flat topology and the sequence of varifolds $|T_j|$ converge to $V \in \mathcal{V}_k(M)$, then $||V||(M) = \mathbf{M}(T)$ implies that $V=|T|$ (see \cite[Chapter 2, 2.1, (18), (f)]{P}). It follows that if $$\lim_i |\phi_i(\alpha^i)| = V \text{ and } ||V||(M) = \mathbf{L}(\Pi)= \mathcal{H}^n(\Sigma)$$ then $V=|\Sigma|$. Thus the only element of the critical set $\mathbf{C}(S)$ is $|\Sigma|$. But by \cite[Theorem $4.3$ (2), Theorem $4.10$]{P}, $|\Sigma|$ should be $\mathbb{Z}_2$ almost minimizing in small annuli around each point, and \cite[Theorem $7.12$]{P} again would imply that $\Sigma$ is embedded, a contradiction.

\end{proof}

We can now finish the proof of the main theorem.

\begin{proof} [Proof of Theorem \ref{principal} and Theorem \ref{principal bis}]
Let's first check the existence of a least area minimal hypersurface, using arguments appearing in \cite{MR}. By the compactness result in \cite{SchoenSimon}, $\mathfrak{A}_S(M)$ is achieved (take a minimizing sequence and apply the compactness theorem in balls of radius smaller than the injectivity radius of $M$). Thus if $$\mathfrak{A}_S(M)=\mathfrak{A}(M),$$ the existence of a minimizer is proved. Suppose on the contrary that $$\mathfrak{A}_S(M)>\mathfrak{A}(M),$$
and take a sequence $\{\Sigma_i\}_i$ of connected minimal hypersurfaces such that $\mathcal{H}^n(\Sigma_i)<\mathfrak{A}_S(M)$ and $\lim\limits_{i \to \infty} \mathcal{H}^n(\Sigma_i) = \mathfrak{A}(M)$. By Proposition \ref{sweepout} and Proposition \ref{embed}, each hypersurface $\Sigma_i$ has an area bigger than or equal to that of an embedded hypersurface, whose area is $\mathbf{L}(\Pi)$ where $\Pi$ is the homotopy class of mappings in $(\mathcal{Z}_n(M,\mathbf{M},\mathbb{Z}_2),\{0\})$ corresponding to the fundamental class in $H_{n+1}(M,\mathbb{Z}_2)$. Hence any minimal hypersurface produced by Almgren-Pitts' theory with the fundamental class of $H_{n+1}(M,\mathbb{Z}_2)$ has area $\mathfrak{A}(M)$ in the case $\mathfrak{A}_S(M)>\mathfrak{A}(M)$.

Then Proposition \ref{embed} implies that any least area minimal hypersurface is embedded. Moreover we can use Proposition \ref{sweepout} to show that if $\Sigma_0$ is a least area minimal hypersurface, it is either stable or coming from Almgren-Pitts' min-max theory with the fundamental class of $H_{n+1}(M,\mathbb{Z}_2)$. Finally, if $\Sigma_0$ is not stable, we have seen in the proof of Proposition \ref{sweepout} that $\Sigma_0$ is two-sided and by reasoning along the lines of \cite[Proposition 3.1]{MaNe} (see also \cite{Zhou}, \cite{MR}), $\Sigma_0$ is indeed of index one. This finishes the proof of Theorem \ref{principal bis}.

\end{proof}

\section{Area rigidity of minimal surfaces in three-manifolds of positive scalar curvature} \label{conjec}

As an application of Theorem \ref{principal bis}, we give a short proof of a conjecture of Marques and Neves (see Theorem 1.3 and below in \cite{MaNe}). Note that the proof only uses min-max methods and the short-time existence theorem for Hamilton's Ricci flow.  

\begin{theo}
Let $M^3$ be a closed three-manifold with scalar curvature $R$ at least $6$, not isometric to the round unit three-sphere $S^3$. Then there exists a closed embedded minimal surface $\Sigma$ of index zero or one such that
$$\mathcal{H}^2(\Sigma) < 4\pi.$$
Moreover, $\Sigma$ can be chosen so that if it is not stable, then it is two-sided and has area equal to the width of the fundamental class of $M$, in the sense of Almgren-Pitts.
\end{theo}

\begin{proof}
By \cite[Theorem 1.2]{MaNe} and Theorem \ref{principal bis}, the theorem is true for $M$ diffeomorphic to $S^3$. In the general case, it is enough to find a finite Riemannian covering $\tilde{M}$ of $M$ which contains a minimal surface $\tilde{\Sigma}$ of area less than $4\pi$. Indeed, if $p$ denotes the natural projection from $\tilde{M}$ to $M$, then $p(\tilde{\Sigma})$ is an immersed minimal surface of area less than $4\pi$ and the result follows readily from Theorem \ref{principal bis}. By \cite[Corollary 0.5, Chapter 15]{MorganTian} and \cite[Theorem 7.1, b)]{Marquesscalar}, an oriented cover $M_{or}$ of $M$ is a connected sum of spherical space forms and finitely many copies of $S^2\times S^1$. When this cover $M_{or}$ is a quotient of the three-sphere, we can just take $\tilde{M}=S^3$. Otherwise $M_{or}$ contains an essential two-sphere $S$. From Lemma 1 and Theorem 1 in \cite{MSY}, we can minimize the area of $S$ in its isotopy class by $\gamma$-reduction and get a non-trivial stable embedded minimal surface diffeomorphic to $S^2$ or $\mathbb{RP}^2$. By \cite[Proposition A.1, (i), (ii)]{MaNe}, it has area bounded by $4\pi/3$ or $2\pi$. Thus in this case we can take $\tilde{M}=M_{or}$.

\end{proof}

\section{Appendix} \label{claims}

In this section, we complete the proofs of Lemma \ref{Plateau}, Proposition \ref{claim} and Lemma \ref{attention} by proving two claims. Before stating the claims, recall the following convention. A subset $C$ of $\mathbb{R}^{n+1}$ is called graph of a real function $f$ over a domain $D$ of a hyperplane $H$ if there is an orthonormal basis $\{e_1,...,e_{n+1}\}$ of $\mathbb{R}^{n+1}$ satisfying:
\begin{itemize}
\item $\forall i\in \{1,...,n\}, e_i \in H$,
\item $f$ is defined on $\tilde{D}:=\{(x_1,...,x_n) ; \sum_{i=1}^n x_ie_i \in D\}$,
\item $C=\{\sum_{i=1}^n x_i e_i + f(x)e_{n+1} ; x=(x_1,...,x_n)\in \tilde{D}\}$.\\
\end{itemize}

\textbf{Claim 1:} Let $N$ be an open subset of $M$ such that $\partial N$ is piecewise smooth mean convex. Then there exists a homotopically closed family $\Lambda$ of sweepouts of $N$.\\

\begin{proof}[Proof of Claim 1]
Recall that by definition, a piecewise smooth mean convex hypersurface is two-sided, i.e. it locally separates $M$. 

It suffices to find one continuous sweepout, then we will take the homotopically closed family $\Lambda$ that it generates. This continuous sweepout will be determined by the level sets of a function $u$ which will be obtained by mollifying the distance function to $\partial N$, called $\mathbf{d}_{\partial N}$. Take $d_0>0$ smaller that the injectivity radius $\mathbf{inj}(M)$.

Consider a locally finite open cover of $\{p\in N : d(p, \partial N) \leq d_0)\}$. Note that this set 
does not intersect $\partial N$. We can suppose that the cover is given by $B(p_1,a_1)$, $B(p_2,a_2)$, ... where 
$$2a_i \leq \min\{d(p_i, \partial N), \mathbf{inj}(M)\} \quad \forall i\in \mathbb{N}\backslash\{0\}.$$
For any $i$, $\exp_{p_i}^{-1} : B(p_i,2a_i) \to \exp_{p_i}^{-1}(B(p_i,2 a_i))  \subset \mathbb{R}^{n+1}$ gives a coordinate chart. We want to modify $\mathbf{d}_{\partial N}$ via those charts. Consider a mollifier, namely a nonnegative smooth function $\vartheta : \mathbb{R}^{n+1} \to \mathbb{R}$ with support in the ball $B_{\mathbb{R}^{n+1}}(0,2)$, positive in $B_{\mathbb{R}^{n+1}}(0,1)$, such that $\int_{\mathbb{R}^{n+1}}\vartheta(\xi)d\xi =1$. To each ball $B(p_i,a_i)$ we associate:
\begin{itemize}
\item a small $b_i>0$ so that $\{x\in\mathbb{R}^{n+1}; B_{\mathbb{R}^{n+1}}(x,2 b_i) \subset \exp_{p_i}^{-1}(B(p_i,2 a_i)) \}$ contains $ \exp_{p_i}^{-1}(B(p_i,3a_i/2))$,
\item the mollifier $\vartheta_i (x) = 1/{b_i}^{n+1} \vartheta(x/b_i) $,
\item a smooth nonnegative cutoff function $\alpha_i$ with support in $B(p_i,3a_i/2)$, with values between $0$ and $1$ and equal to $1$ in $B(p_i,a_i)$.
\end{itemize}
We now construct by induction a sequence of functions $\tilde{d}^i$ which approximate $\mathbf{d}_{\partial N}$ and are smooth in respectively $\bigcup_{k=1}^i B(p_k,a_k)$. First we define $\tilde{{d}}^1 : N \cup \partial N \to \mathbb{R}$ with a slight abuse of notations:
$$
 \tilde{{d}}^1 = \alpha_1 \big\{(\exp_{p_1}^{-1})^*(\vartheta_1 \ast \exp_{p_1}^*(\mathbf{d}_{\partial N})) \big\} +(1-\alpha_1) \mathbf{d}_{\partial N}, 
 $$
where $\ast$ denotes the convolution product
$$(\exp_{p_1}^{-1})^* (\vartheta_1 \ast  \exp_{p_1}^*(\mathbf{d}_{\partial N}))(p) =\int_{\mathbb{R}^{n+1}}\vartheta_1( \exp_{p_1}^{-1}(p)-\xi)\mathbf{d}_{\partial N}(\exp_{p_1}(\xi)) d\xi.$$
Then if $\tilde{{d}}^i$ is constructed, we similarly define
\begin{equation} \label{convolutionary}
 \tilde{{d}}^{i+1} = \alpha_{i+1} \big\{(\exp_{p_{i+1}}^{-1})^*(\vartheta_{i+1} \ast \exp_{p_{i+1}}^*( \tilde{{d}}^i))\big\} +(1-\alpha_{i+1}) \tilde{{d}}^i.
\end{equation}
Since our cover is locally finite, $\tilde{d}^i$ locally simply converges to a function $\tilde{\mathbf{d}} : N \cup \partial N \to \mathbb{R}$, which is smooth and positive on the open set $U := \{p\in N : d(p, \partial N) < d_0)\}$, equal to $0$ on $\partial N$ and continuous on $U \cup \partial N$.

At any point $p$ of $\partial N$, $\partial N$ is locally contained in the union of a finite number of embedded hypersurfaces intersecting at $p$. Hence we could have taken $d_0$ small enough so that for any point $p \in U \cup \partial N$, $\mathbf{d}_{\partial N}$ is locally the minimum of the distance to a finite number of embedded hypersurfaces $\Omega_1$, ..., $\Omega_J$, that is for $p'$ near $p$:
\begin{equation} \label{dist}
\mathbf{d}_{\partial N}(p') = \min_{i=1,...,J} d(p',\Omega_i).
\end{equation}
We can reduce $d_0$ again if necessary, so that each function $d(.,\Omega_i)$ is smooth in a neighborhood of $p$. Moreover because $\partial N$ is piecewise smooth mean convex, we can find a constant $\kappa_1>0$ such that if $p \in \partial N$, then there is $\mathbf{v}_p\in T_pM$ such that $\langle \mathbf{v}_p, \nu \rangle_g\geq \kappa_1$, for any $\nu\in T_pM$ outward unit normal at $p$ of one of the smooth pieces $\Omega_i$. Another useful remark is that $\mathbf{d}_{\partial N}$ is differentiable almost everywhere and if it is differentiable at $p\in U$, its differential is equal to the differential of one of the $d(.,\Omega_i)$ ($\Omega_i$ being as in (\ref{dist})). Note that by (\ref{convolutionary}),
\begin{align*}
\nabla \tilde{d}^{i+1} & = \alpha_{i+1} \nabla \big\{(\exp_{p_{i+1}}^{-1})^*(\vartheta_{i+1} \ast \exp_{p_{i+1}}^*( \tilde{{d}}^i))\big\} + (1- \alpha_{i+1}) \nabla \tilde{d}^i\\
& + \nabla\alpha_{i+1} (\big\{(\exp_{p_{i+1}}^{-1})^*(\vartheta_{i+1} \ast \exp_{p_{i+1}}^*( \tilde{{d}}^i))\big\} - \tilde{d}^i).
\end{align*}
By the usual properties of convolution with a Lipschitz function and the equality above, if we choose $b_i$ successively small enough, then the limit gradient $\nabla \tilde{\mathbf{d}}$ will be arbitrarily close to a local average (in a $d(p,\partial N)$-neighborhood of $p$) of the differentials of $d(.,\Omega_i)$. Summing up all these facts, one obtains for $b_1$, $b_2$... small enough:
\begin{enumerate} \label{wxc}
\item $\nabla \tilde{\mathbf{d}}$ does not vanish in $U$,
\item $|| \nabla \tilde{\mathbf{d}}||$ is bounded by $2$ in $U$,
\item at a point $p\in \partial N$, there is a $v(p)\in \mathbb{R}^{n+1}$ and a small radius $r(p)<\mathbf{inj}(M)$ such that for $p'\in U\cap B(p,r(p))$, 
$$\langle d_{p'}\exp_p^{-1}(\nabla \tilde{\mathbf{d}}(p')), v(p) \rangle_{st}> \kappa_1/2 $$
where $\langle.,.\rangle_{st}$ denotes the standard scalar product in $\mathbb{R}^{n+1}$, and $d_{p'}\exp_p^{-1}$ is the differential of $\exp_p^{-1}$ at $p'$.
\end{enumerate}
Items (1)-(3) imply that there exist $K>0$, $r_0>0$ verifying the following property:\\ 
\begin{align} \label{lipschitz}
\begin{split} 
&\text{For all $p \in \partial N$ and $p' \in B(p,r_0)\cap U$,}\\
&\exp_p^{-1} (B(p,r_0)\cap \tilde{\mathbf{d}}^{-1}(\tilde{\mathbf{d}}(p'))) \subset \mathbb{R}^{n+1}\\
&\text{is the graph of a $K$-Lipschitz smooth function defined}\\
&\text{over a domain of a hyperplane independent of $p'$.}\\ 
&\text{The function and its domain depend smoothly on $p'$}.
\end{split}
\end{align}

It remains to construct $u$ such that it coincides with $\tilde{\mathbf{d}}$ near $\partial N$ and is a Morse function in $N$. It is similar to the proof of Claim 1 in \cite{Zhou}. Define
$$V_s = \{p\in U\cup \partial N ; \tilde{\mathbf{d}}(p)<s\}.$$
For $\epsilon>0$ small enough, $V_{2\epsilon} \subset U\cup \partial N$ and there exists a smooth function $h$ defined on $N$ arbitrarily close to $\tilde{\mathbf{d}}$ in $V_{2\epsilon}\backslash V_{\epsilon}$ for the $C^1$ topology, and such that $h(a)>\tilde{\mathbf{d}}(b)$ for any $a\in N\backslash V_{\epsilon}$ and $b\in V_{\epsilon/2}$. The function $h$ can be assumed to be Morse because of the density of Morse functions in $C^k(N)$ for $k\geq2$. Consider a cutoff function $\varphi : N\to \mathbb{R}$ such that $\varphi \equiv 1$ on $V_\epsilon$,  $\varphi \equiv 0$ on $N\backslash V_{2\epsilon}$ and $0\leq \varphi\leq 1$. Define
$$u = \varphi \tilde{\mathbf{d}} +(1-\varphi) h.$$
If $h$ was chosen sufficiently close to $\tilde{\mathbf{d}}$ in $V_{2\epsilon}\backslash V_{\epsilon}$ for the $C^1$ topology, then item (1) in the previous list implies that $\nabla u$ does not vanish in $V_{2\epsilon}$. Reparametrizing the level sets of $u$, we get a family $\{\Gamma_t\}_{t\in I}$.

Note that for any smooth point $p$ of $\partial N$, there is a neighborhood $B(p,r')$ of $p$ inside which ${\mathbf{d}}_{\partial N}$ is smooth with non vanishing gradient and so the level sets of $\tilde{\mathbf{d}}$ become closer and closer (in the $C^\infty$-topology) to the level sets of $\mathbf{d}_{\partial N}$ near $\partial N$. Consequently, $B(p,r')\cap \tilde{\mathbf{d}}^{-1}(\tilde{\mathbf{d}}(p'))$ converges smoothly to $B(p,r')\cap \partial N$ as $p' \to p$. So adding Property (\ref{lipschitz}), we readily check that $\{\Gamma_t\}_{t\in I}$ satisfies the conditions of Definition \ref{definition} and is the wanted sweepout.

\end{proof}

Consider a family of closed subsets $\{C_s\}_{s\in I}$ of $M$. Suppose that there are a finite family of points $\{p_j\}_{j=1}^J$ and a corresponding family of radii $\{r_j\}_{j=1}^J$ such that $r_j < \mathbf{inj}(M)$, the balls $B(p_j,r_j)$ cover $M$ and there is a positive integer $K_0$ such that for all $j\in \{1,...,J\}$ and $s\in I$,
$$\exp_{p_j}^{-1}(B(p_j,r_j) \cap C_s) \subset \mathbb{R}^{n+1}$$
is included in the union of at most $K_0$ graphs of $K_0$-Lipschitz functions defined over domains of possibly different hyperplanes. In this case, we will say that the family $\{C_s\}$ is uniformly Lipschitz. Then we have the following easy claim:\\

\textbf{Claim 2:} If $\Phi:[a,b] \to \mathcal{Z}_n(M,\mathbb{Z}_2)$ is continous in the flat topology and if $\{\spt(\Phi(x))\}_{x\in [a,b]}$ is a uniformly Lipschitz family of closed sets then 
$$\mathbf{m}(\Phi,r) \to 0 \text{ as } r\to 0.$$

\begin{proof}[Proof of Claim 2]
It is enough to verify it in each ball $B(p_j,r_j)$. But then the lemma follows from the formula for computing the area in $\mathbb{R}^{n+1}$ of the graph of a real function defined on a domain of $\mathbb{R}^n$ and the fact that there is a uniform constant $\kappa_2>0$ with $\exp_{p_j}^*g \leq \kappa_2 . g_{st}$ for all $j$ ($g$ is the metric on $M$ and $g_{st}$ is the standard metric on $\mathbb{R}^{n+1}$).
\end{proof}

The previous claim then suffices to complete the proofs of Lemma \ref{Plateau}, Proposition \ref{claim} and Lemma \ref{attention}. Indeed, in Lemma \ref{Plateau} and Proposition \ref{claim}, the supports of $\xi$ and $\theta$ clearly form a uniformly Lipschitz family. Secondly, for Lemma \ref{attention}, we just use Property (\ref{lipschitz}).  

\bibliographystyle{plain}
\bibliography{biblio06_12_16memoire}

\begin{thebibliography}{10}

\bibitem{Aron}
{N.} Aronszajn.
\newblock A unique continuation theorem for solutions of elliptic partial
  differential equations or inequalities of second order.
\newblock {\em Journ. Math. Pures et Appliqu\'ees}, 36:235--249, 1957.

\bibitem{CC}
E.~Calabi and J.~Cao.
\newblock Simple closed geodesics on convex surfaces.
\newblock {\em J. Differential Geom.}, 36(3):517--549, 1992.

\bibitem{C&DL}
{T. H.} Colding and C.~{De Lellis}.
\newblock The min-max construction of minimal surfaces.
\newblock {\em Surveys in differential geometry}, VIII:75--107, 2003.

\bibitem{Acourseinminimalsurfaces}
{T. H.} Colding and {W. P.} {Minicozzi II}.
\newblock {\em A course in minimal surfaces}, volume 121 of {\em Graduate
  Studies in Mathematics}.
\newblock American Mathematical Society, Providence, RI, 2011.

\bibitem{Tasnady}
C.~{De Lellis} and D.~Tasnady.
\newblock The existence of embedded minimal hypersurfaces.
\newblock {\em J. Differential Geom.}, 00:1--34, 2010.

\bibitem{Alm1}
Jr. {F. J.}~Almgren.
\newblock The homotopy groups of the integral cycle groups.
\newblock {\em Topology}, 1:257--299, 1962.

\bibitem{Alm2}
Jr. {F. J.}~Almgren.
\newblock The theory of varifolds.
\newblock Mimeographed notes, Princeton, 1962.

\bibitem{Federer}
H.~Federer.
\newblock {\em Geometric measure theory}.
\newblock Springer-Verlag, 1969.

\bibitem{GG}
M.~Golubitsky and V.~Guillemin.
\newblock {\em Stable mappings and their singularities}.
\newblock Springer-Verlag, 1973.

\bibitem{HS}
{R.} Hardt and L.~Simon.
\newblock Nodal sets for solutions for elliptic equations.
\newblock {\em J. Differential Geom.}, 30:505--522, 1989.

\bibitem{Marquesscalar}
{F. C.} {Marques}.
\newblock Deforming three-manifolds with positive scalar curvature.
\newblock {\em Ann. of Math.}, 176:815--863, 2012.

\bibitem{MaNe}
F.~C. Marques and A.~Neves.
\newblock Rigidity of min-max minimal spheres in three-manifolds.
\newblock {\em Duke Math. J.}, 161(14):2725--2752, 2012.

\bibitem{MaNeinfinity}
F.~C. Marques and A.~Neves.
\newblock Existence of infinitely many minimal hypersurfaces in positive
  {R}icci curvature.
\newblock arXiv:1311.6501 [math.DG], 2013.

\bibitem{MaNeWillmore}
F.~C. Marques and A.~Neves.
\newblock Min-max theory and the {W}illmore conjecture.
\newblock {\em Ann. of Math.}, 179(2):683--782, 2014.

\bibitem{MR}
{L.} Mazet and H.~Rosenberg.
\newblock Minimal hypersurfaces of least area.
\newblock 2015.
\newblock preprint, arXiv:1503.02938v2.

\bibitem{MSY}
B.~Meeks, L.~Simon, and S.~T. Yau.
\newblock Embedded minimal surfaces, exotic spheres, and manifolds with
  positive {R}icci curvature.
\newblock {\em Ann. of Math.}, 116:621--659, 1982.

\bibitem{Morgan}
F.~Morgan.
\newblock A regularity theorem for minimizing hypersurfaces modulo {$\nu$}.
\newblock {\em Trans. Amer. Math. Soc.}, 297:243--253, 1986.

\bibitem{MorganTian}
J.~Morgan and G.~Tian.
\newblock {\em {R}icci flow and the Poincar\'{e} conjecture}.
\newblock Clay Mathematics Monographs, \textbf{vol. 3}, American Math. Society,
  2007.

\bibitem{Nirenberg}
L.~Nirenberg.
\newblock Remarks on strongly elliptic partial differential equations.
\newblock {\em Comm. Pure Appl. Math.}, 8(4):648--674, 1955.

\bibitem{P}
J.~T. Pitts.
\newblock {\em Existence and regularity of minimal surfaces on Riemannian
  manifolds}.
\newblock Princeton University Press and University of Tokyo Press, 1981.

\bibitem{SchoenSimon}
{R.} Schoen and L.~Simon.
\newblock Regularity of stable minimal hypersurfaces.
\newblock {\em Comm. Pure Appl. Math.}, 34:741--797, 1981.

\bibitem{Smith}
F.~Smith.
\newblock {\em On the existence of embedded minimal {$2$-}spheres in the
  {$3$-}sphere, endowed with an arbitrary Riemannian metric}.
\newblock PhD thesis, 1982.

\bibitem{Zhou}
X.~Zhou.
\newblock Min-max minimal hypersurface in {$(M^{n+1},g))$} with {$Ric\geq0$}
  and {$2\leq n\leq 6$}.
\newblock {\em J. Differential Geom.}, 100(1):129--160, 2015.

\end{thebibliography}

\end{document}